\def\DT{h}        
\def\MUINI{\mu_0}   
\def\TR{\mathrm{T}}  
\renewcommand{\epsilon}{\varepsilon}
\renewcommand{\leq}{\leqslant}
\renewcommand{\geq}{\geqslant}
\newtheorem{remark}{Remark}[section]
\newtheorem{assumption}{Assumption}
\newcommand{\TheTitle}{{Convergence of the likelihood ratio method for linear response of non-equilibrium stationary states} } 
\newcommand{\TheAuthors}{P. Plech\'{a}\v{c}, G. Stoltz and T. Wang}
\title{\TheTitle}
\author{
Petr Plech\'{a}\v{c}\thanks{University of Delaware, Newark, DE, 19716 {(\email{plechac@math.udel.edu})}}
\and
Gabriel Stoltz\thanks{Universit\'{e} Paris-Est, CERMICS (ENPC), INRIA
(\email{gabriel.stoltz@enpc.fr})} 
\and
Ting Wang\thanks{U.S. Army Research Laboratory, Aberdeen Proving Ground, MD, 21005   (\email{tingw@udel.edu})}
}
\begin{document}

\maketitle

\begin{abstract}
We consider numerical schemes for computing the linear response of 
steady-state averages of stochastic dynamics with respect to a perturbation of the drift part of the stochastic differential equation. 
The schemes are based on Girsanov's change-of-measure theory to reweight trajectories with factors derived from a linearization of the Girsanov weights.
We investigate both the discretization error and the finite time approximation error.
The designed numerical schemes are shown to be of bounded variance with respect to the integration time, which is a desirable feature for long time simulation. We also show how the discretization error can be improved to second order accuracy in the time step by modifying the weight process in an appropriate way.    
\end{abstract}

\begin{keywords}
non-equilibrium steady states, linear response, stochastic differential equations, Poisson equation,  likelihood ratio method, variance reduction
\end{keywords}

\begin{AMS}
65C05, 65C20, 65C40, 60J27, 60J75
\end{AMS}


\section{Introduction}

In many applications one is interested in knowing the response of the steady-state distribution of a stochastic dynamical system with respect to a perturbation to the dynamics.
For example, an important quantity of interest in the linear response theory of statistical mechanics is the transport coefficient $\rho$ that relates the average response of the system in its steady state to the external forcing applied to the system \cite{evans2008statistical, tuckerman2010statistical}, e.g., the mobility, the shear viscosity and the thermal conductivity.
The problem admits a simple mathematical interpretation:
given the stochastic dynamics $X(t)$ with its invariant measure $\mu$ and the perturbed dynamics $X^{\epsilon}(t)$ with its invariant measure $\mu^{\epsilon}$, how does the perturbed steady-state average of some observable $\theta$, i.e.,  $\mu^{\epsilon}(\theta) \triangleq \int \theta(x) \, \mu^{\epsilon}(dx)$,
react to the perturbation with magnitude $\epsilon \in \mathbb{R}$?
That is, we are interested in computing the derivative
\begin{equation}\label{eqn:transport-coefficient}
\rho(\theta) \triangleq \frac{d}{d\epsilon} \mu^{\epsilon}(\theta) = \lim_{\epsilon \to 0} \frac{1}{\epsilon}\Big(\mu^{\epsilon}(\theta) - \mu(\theta)\Big).
\end{equation}

Numerically, due to possible high dimensionality, the averages with respect to
the invariant measure are often approximated as ergodic averages with very long integration times.
Traditional numerical approaches for computing transport coefficients can be classified into two main categories: (i)~either through reformulating the linear response as an integrated correlation based on the Green-Kubo formula, or (ii)~through approximating the derivative in~\eqref{eqn:transport-coefficient} using finite differences.
See, for example, \cite[Chapter~5]{lelievre2016partial} for a review.
Let us also mention another technique proposed recently in \cite{assaraf2018computation}, based on coupling $X(t)$ with its associated tangent process.   

The estimation of \eqref{eqn:transport-coefficient} is also known as the steady-state sensitivity analysis in the stochastic simulation community \cite{asmussen2007stochastic, glasserman2013monte, rubinstein2016simulation}.
The likelihood ratio (LR) method is one of the most widely used methods for sensitivity analysis in this community \cite{glynn1990likelihood}. However, the method is rarely used for steady-state sensitivity analysis since it is always numerically observed that the variance grows rapidly in terms of the integration time. 
Nevertheless, thanks to the zero mean martingale structure of the stochastic exponential involved in the estimator, the large variance issue of the LR method can be remedied by centering the estimator at the steady-state average $\mu(\theta)$ \cite{asmussen2007stochastic, glynn2019likelihood, wang2019steady, wang2016efficiency}. This simple idea leads to the centered LR (CLR) method.
In particular, it has been theoretically shown for continuous time jump Markov processes that the variance of the CLR estimator is uniformly bounded in terms of integration time \cite{wang2019steady}, which suggests that CLR is particularly useful for steady-state sensitivity analysis.

The aim of this work is to introduce the CLR method for sensitivity analysis of stochastic differential equations (SDEs).
In other words, we propose an alternative numerical approach based on CLR for computing the linear response $\rho(\theta)$. This method is in fact reminiscent of Bismut's approach to Malliavin calculus~\cite{bismut}. 
Similar to other Monte Carlo numerical approaches, 
there are two sources of errors associated with using CLR to approximate the linear response : (i) a systematic bias, which stems from the finite time step $\DT$ used to discretize the continuous dynamics; (ii) a statistical error arising from the finite time $T$ that the dynamics is integrated up to. 
We provide convergence results of the CLR scheme in the weak sense, where both sources of error are taken into account.
Furthermore, by modifying the weight process associated with the CLR estimator appropriately, we introduce a second order CLR estimator that reduces the systematic bias to $\mathcal{O}(\DT^2)$.
The variances of both estimators remain bounded with respect to the integration time and hence is particularly efficient for sampling the linear response of non-equilibrium stationary states.

Our main theoretical tools for analysis are the continuous time Poisson equation associated with the continuous dynamics driven by the underlying SDE and the discrete time Poisson equation associated with the discrete dynamics driven by the Markov chain generated by the numerical discretization.
The main advantage of using Poisson equations is that they serve as a natural link between asymptotic time averages of Markov processes and differential equations.

We do not try to address the most general setting in this paper. 
We assume in fact that the state space is compact and that the diffusion term is non-degenerate. 
This simplifying setting allows us to avoid 
some technical difficulties in the proofs so that 
we can focus on the design of the numerical schemes. 
Although the current setting excludes some important applications such as the linear response estimation 
for hypoelliptic systems (e.g., underdamped Langevin dynamics), we emphasize that the results in this work can be proven in more general settings under additional assumptions. For instance, for underdamped Langevin dynamics, we can introduce a sequence of smooth bounding functions to handle unbounded states and obtain estimates on solutions of Poisson equations and their derivatives \cite{kopec2015weak, leimkuhler2015computation}, so that the same arguments as in the proofs of the current work apply.

The manuscript is organized as follows. In Section~\ref{sec:continuous-dynamics}, we set up the probabilistic framework for the linear response problem, and provide some preliminaries regarding the steady-state LR method. Specifically, we show that the continuous time CLR estimator is asymptotically unbiased and is of uniformly bounded variance with respect to the integration time $T$. 
The weak numerical schemes that discretize the SDE and the ergodicity of the associated Markov chains are discussed in Section~\ref{sec:discrete-dynamics}. 
Section~\ref{sec:CLR-scheme} contains the main results of this work. We propose a weak first order CLR scheme and analyze both the bias and variance of the associated estimator.
The order of weak error is improved in Section~\ref{sec:second-order-scheme} where we design a weak second order CLR estimator for a specific second order discretization scheme by modifying the weight process in an appropriate way.
The strategy is generalized in Section~\ref{sec:generel-2nd-order}.
Our theoretical results are illustrated by a numerical example in Section~\ref{sec:numerical-results}.
Some technical results are gathered in Section~\ref{sec:proofs}.
Finally, we comment that for the ease of presentation, most of the proofs in this work are presented in the scalar setting although the results are stated in the multi-dimensional setting.


\section{Continuous time estimator of the linear response}\label{sec:continuous-dynamics}

\subsection{Linear response for non-equilibrium dynamics}

We study dynamics whose evolution is dictated by a stochastic differential equation. Given a probability space $(\Omega, \mathcal{F}, \mathbb{P})$,
we denote by $W(t)=(W^1(t), \ldots, W^d(t))^\TR$ the $d$-dimensional standard Brownian motion on this probability space.
We consider the stochastic process $X(t)$ that satisfies a SDE on the following compact state space, to simplify the mathematical analysis.

\begin{assumption}\label{assume:bounded-config-space}
The state space $\mathcal{X}$ is the $d$-dimensional torus $\mathbb{T}^d$ (where $\mathbb{T} = \mathbb{R}/\mathbb{Z}$).
\end{assumption}

More precisely, we consider 
\begin{equation}\label{eqn:SDE}
    dX(t) = b(X(t))\,dt + \sigma(X(t))\, dW(t),
\end{equation}
where
$b:\mathcal{X} \to \mathbb{R}^{d}$ is the drift term and $\sigma: \mathcal{X} \to \mathbb{R}^{d \times d}$ is the diffusion term. We denote the initial distribution of $X(0)$ by $\MUINI$ and by $\mathcal{F}_t$ the natural filtration associated with $X(t)$. We further assume the following conditions on the drift and diffusion terms.

\begin{assumption}\label{assume:PD-diffusion}
The functions $b$ and $\sigma$ are $C^{\infty}$, and the diffusion matrix $\sigma\sigma^\TR$ is positive definite.
\end{assumption}

These conditions guarantee that the SDE~\eqref{eqn:SDE} is non-degenerate and has a unique solution. The solution $X(t)$ of \eqref{eqn:SDE} is a Markov process with infinitesimal generator
\[
\mathcal{L} = b \cdot \nabla + \frac{1}{2}\sigma \sigma^\TR : \nabla^2 = \sum_{i = 1}^d b^i \partial_{i} + \frac{1}{2}\sum_{i = 1}^d \sum_{j = 1}^d \sum_{k = 1}^d \sigma^{ik} \sigma^{jk} \partial_{ij}.
\]
Assumption~\ref{assume:PD-diffusion} ensures that $\mathcal{L}$ is an elliptic differential operator.
For $k = 1, \ldots, d$, we also introduce the operator 
\[
\mathcal{K}^k = \sum_{i=1}^{d}\sigma^{ik} \partial_{i},
\]
so that, for any $C^{\infty}$ test function $\theta: \mathcal{X} \to \mathbb{R}$, the It\^{o} formula reads
\[
\theta(X(t)) = \theta(X(0)) + \int_0^t \mathcal{L} \theta(X(s)) \,ds + \sum_{k=1}^d \int_0^t \mathcal{K}^k \theta(X(s))\, dW^k(s).
\]
In view of Assumptions \ref{assume:bounded-config-space} and \ref{assume:PD-diffusion} the dynamics $X(t)$ admits a unique invariant measure~$\mu$ with a positive density $f$ with respect to the Lebesgue measure, and hence the law of large number holds (see for instance~\cite{bellet2006ergodic, kliemann1987recurrence}):
for any initial state $X(0) \in \mathcal{X}$
and any observable $\theta \in L^1(\mu)$,
\begin{equation}\label{eqn:SDE-ergodicity}
\lim_{t \to \infty}\frac{1}{t} \int_0^t \theta(X(s))\, dt = 
\mu(\theta) \triangleq \int_{\mathcal{X}}\theta(x) \, \mu(dx)   \qquad \mathbb{P}-\mathrm{a.s.}
\end{equation}

Now suppose that there is a small external forcing $F: \mathcal{X} \to \mathbb{R}^d$, typically non-gradient and assumed to be $C^{\infty}$, added to the reference drift. This leads to the following perturbed dynamics:
\begin{equation}\label{eqn:SDE-perturbed}
    dX^{\epsilon}(t) = \Big( b(X^{\epsilon}(t)) + \epsilon F(X^{\epsilon}(t)) \Big) dt + \sigma(X^{\epsilon}(t))\, dW(t).
\end{equation}
The infinitesimal generator of the perturbed dynamics, denoted by  $\mathcal{L}^{\epsilon}$, can be written as 
\[\mathcal{L}^{\epsilon} = \mathcal{L} + \epsilon \widetilde{\mathcal{L}}, \quad \widetilde{\mathcal{L}} = F \cdot \nabla.\]
Similarly to the discussion for the reference dynamics, the perturbed dynamics \eqref{eqn:SDE-perturbed} has a unique solution and admits a unique invariant measure $\mu^{\epsilon}$ with smooth density function $f^{\epsilon}$ with respect to the Lebesgue measure.
For a smooth observable $\theta$,
we are interested in estimating the linear response
\begin{equation*}
    \rho(\theta) = \lim_{\epsilon \to 0}\frac{1}{\epsilon}(\mu^{\epsilon}(\theta) - \mu(\theta))
    = \lim_{\epsilon \to 0}\frac{1}{\epsilon}\int_{\mathcal{X}} \theta(x) (f^{\epsilon}(x) - f(x)) \, dx.
\end{equation*}
In fact, this linear response can be reformulated in terms of the generator $\mathcal{L}$ and the operator $\widetilde{\mathcal{L}}$, using the following result which provides an expansion of $f^{\epsilon}$ in terms of the perturbation magnitude $\epsilon$ (see for instance~\cite[Theorem~5.1]{lelievre2016partial}).
To state it, we introduce the projection operator
\begin{equation}\label{eqn:projection-operator}
\Pi\theta = \theta - \mu(\theta),
\end{equation}
and denote by $L_0^2(\mu) = \Pi L^2(\mu)$ the Hilbert space of square integrable functions with respect to the measure~$\mu$ whose average with respect to $\mu$ is 0.

\begin{theorem}\label{thm:invariant-measure-expansion}
The operator $\Pi \widetilde{\mathcal{L}} \mathcal{L}^{-1}$ is bounded on $L_0^2(\mu)$, and so is its adjoint $(\Pi\widetilde{\mathcal{L}} \mathcal{L}^{-1})^* = (\widetilde{\mathcal{L}} \mathcal{L}^{-1})^*$.
Denoting by $r$ the spectral radius of $(\widetilde{\mathcal{L}}\mathcal{L}^{-1})^*$, i.e., 
\[
r = \lim_{n\to\infty}\left\|\left[\left(\widetilde{\mathcal{L}}\mathcal{L}^{-1}\right)^*\right]^n\right\|_{\mathcal{B}(L^2_0(\mu))}^{1/n},
\]
the invariant probability measure $\mu^{\epsilon}$ can be written, for any $\epsilon < r^{-1}$, 
as $\mu^{\epsilon} = g^{\epsilon}\mu$, where $g^{\epsilon} \in L^2(\mu)$ admits the following expansion in $\epsilon$:
\[
g^{\epsilon} = \left(1+\epsilon \left(\widetilde{\mathcal{L}}\mathcal{L}^{-1}\right)^* \right)^{-1} {\bf{1}} = \left(1 + \sum_{n=1}^{\infty}(-\epsilon)^n \left[\left(\widetilde{\mathcal{L}}\mathcal{L}^{-1}\right)^*\right]^n\right) {\bf{1}}. 
\]
\end{theorem}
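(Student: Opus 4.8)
The plan is to work entirely in the Hilbert space $L^2(\mu)$ and to exploit that, under Assumptions~\ref{assume:bounded-config-space} and~\ref{assume:PD-diffusion}, the generator $\mathcal{L}$ is an elliptic operator on the compact manifold $\mathbb{T}^d$ whose associated semigroup is ergodic with a spectral gap. First I would record the two structural facts I need. (i)~The kernel of $\mathcal{L}$ in $L^2(\mu)$ is $\mathrm{span}\{\mathbf{1}\}$ and, by the spectral gap, $\mathcal{L}$ restricts to an invertible operator on $L_0^2(\mu)$ with bounded inverse $\mathcal{L}^{-1}$; by elliptic regularity on the torus, $\mathcal{L}^{-1}$ maps $L_0^2(\mu)$ boundedly into $H^2\cap L_0^2(\mu)$. (ii)~Since $\mu$ is invariant, $\mathcal{L}^*\mathbf{1}=0$, where $\mathcal{L}^*$ is the $L^2(\mu)$-adjoint, and $\mathcal{L}^*$ is likewise invertible on $L_0^2(\mu)$ with $(\mathcal{L}^*)^{-1}=(\mathcal{L}^{-1})^*$.

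For the boundedness claim, given $\phi\in L_0^2(\mu)$ I would set $\psi=\mathcal{L}^{-1}\phi\in H^2$ and estimate $\|\widetilde{\mathcal{L}}\psi\|_{L^2(\mu)}=\|F\cdot\nabla\psi\|_{L^2(\mu)}\le \|F\|_{L^\infty}\|\psi\|_{H^1}\le C\|\phi\|_{L^2(\mu)}$, using that $F$ is smooth on a compact set; composing with the orthogonal projection $\Pi$ (of norm $\le 1$) then gives boundedness of $\Pi\widetilde{\mathcal{L}}\mathcal{L}^{-1}$. The adjoint identity follows because $\Pi$ is self-adjoint and acts as the identity on $L_0^2(\mu)$: for $\phi,\chi\in L_0^2(\mu)$ one has $\langle \Pi\widetilde{\mathcal{L}}\mathcal{L}^{-1}\phi,\chi\rangle_\mu=\langle \widetilde{\mathcal{L}}\mathcal{L}^{-1}\phi,\Pi\chi\rangle_\mu=\langle \widetilde{\mathcal{L}}\mathcal{L}^{-1}\phi,\chi\rangle_\mu$, so the adjoint loses the projection and equals $(\widetilde{\mathcal{L}}\mathcal{L}^{-1})^*=(\mathcal{L}^{-1})^*\widetilde{\mathcal{L}}^*$.

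Next I would characterize the perturbed density. Writing $\mu^\epsilon=g^\epsilon\mu$, invariance of $\mu^\epsilon$ under the perturbed dynamics means $\int \mathcal{L}^\epsilon\theta\, g^\epsilon\, d\mu=0$ for all smooth $\theta$, i.e.\ $(\mathcal{L}^\epsilon)^*g^\epsilon=0$ with $(\mathcal{L}^\epsilon)^*=\mathcal{L}^*+\epsilon\widetilde{\mathcal{L}}^*$. Decomposing $g^\epsilon=\mathbf{1}+\tilde g$ with $\tilde g=\Pi g^\epsilon\in L_0^2(\mu)$ (normalisation $\mu(g^\epsilon)=1$), and using $\mathcal{L}^*\mathbf{1}=0$, the stationary equation becomes $\mathcal{L}^*\tilde g=-\epsilon\widetilde{\mathcal{L}}^*\mathbf{1}-\epsilon\widetilde{\mathcal{L}}^*\tilde g$. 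A short integration by parts shows $\widetilde{\mathcal{L}}^*\mathbf{1}=-f^{-1}\,\mathrm{div}(Ff)$ has zero $\mu$-average, hence all three terms lie in $L_0^2(\mu)$; applying $(\mathcal{L}^*)^{-1}=(\mathcal{L}^{-1})^*$ and abbreviating $T=(\widetilde{\mathcal{L}}\mathcal{L}^{-1})^*$ yields the fixed-point relation $(I+\epsilon T)\tilde g=-\epsilon T\mathbf{1}$, where $T\mathbf{1}:=(\mathcal{L}^{-1})^*\widetilde{\mathcal{L}}^*\mathbf{1}\in L_0^2(\mu)$ is well defined even though $\mathbf{1}\notin L_0^2(\mu)$.

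Finally, for $\epsilon<r^{-1}$ the spectral radius bound $\lim_n\|T^n\|^{1/n}=r$ makes the Neumann series $(I+\epsilon T)^{-1}=\sum_{n\ge0}(-\epsilon)^nT^n$ converge in operator norm on $L_0^2(\mu)$, so $\tilde g=-\epsilon(I+\epsilon T)^{-1}T\mathbf{1}$ and therefore $g^\epsilon=\mathbf{1}+\tilde g=(I+\epsilon T)^{-1}\mathbf{1}=\big(\mathbf{1}+\sum_{n\ge1}(-\epsilon)^nT^n\big)\mathbf{1}$, which is the asserted expansion; I would double-check the telescoping $\mathbf{1}-\epsilon\sum_{n\ge0}(-\epsilon)^nT^{n+1}\mathbf{1}=\sum_{m\ge0}(-\epsilon)^mT^m\mathbf{1}$ to match the stated formula. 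The main obstacle I anticipate is the functional-analytic groundwork rather than the algebra: justifying the spectral gap and the bounded invertibility of $\mathcal{L}$ (together with the elliptic $H^2$ estimate) on $L_0^2(\mu)$, and ensuring $g^\epsilon$ is regular enough that the weak stationarity $(\mathcal{L}^\epsilon)^*g^\epsilon=0$ holds in the $L^2(\mu)$ sense used above; the recurring subtlety that $\mathbf{1}\notin L_0^2(\mu)$ must be handled consistently by peeling off the constant part before applying operators that only live on $L_0^2(\mu)$.
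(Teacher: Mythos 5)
The paper does not prove this theorem itself but cites \cite[Theorem~5.1]{lelievre2016partial}, and your argument is precisely the standard one used there: decompose $g^{\epsilon} = \mathbf{1} + \Pi g^{\epsilon}$, rewrite the stationary equation $(\mathcal{L}^{\epsilon})^* g^{\epsilon} = 0$ in $L^2(\mu)$ using $\mathcal{L}^*\mathbf{1}=0$ and the bounded invertibility of $\mathcal{L}$ on $L_0^2(\mu)$, and resum the resulting fixed-point relation by a Neumann series valid for $|\epsilon|\,r<1$. Your proposal is correct (the smoothness of $f^{\epsilon}$, granted earlier in the paper by ellipticity, supplies the regularity needed to interpret the stationarity in $L^2(\mu)$ and to apply $\widetilde{\mathcal{L}}^*$ to $\tilde g$), so nothing further is needed.
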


A direct result of the above theorem is the following formula for the linear response:
\begin{equation}\label{eqn:rho_theta_first_def}
    \rho(\theta) = -\int_{\mathcal{X}} \widetilde{\mathcal{L}} \mathcal{L}^{-1} \left[\theta(x) - \mu(\theta)\right] \, dx.
\end{equation}

\subsection{The continuous time Poisson equation}\label{sec:continuous_Poisson}

Poisson equations are a useful tool to study asymptotic properties of ergodic Markov processes, in particular to quantify the bias arising from finite time sampling as in \cite{mattingly2010convergence}, and the asymptotic variance of time averages~\cite{Bhattacharya82,meyn2012markov}.
Given a Markov process $X(t)$ with generator $\mathcal{L}$, the Poisson equation associated with a given observable~$\theta$ reads
\begin{equation}\label{eqn:continuous-Poisson-equation}
  -\mathcal{L}\widehat{\theta} = \theta - \mu(\theta).
\end{equation}
We need to provide a functional space guaranteeing the well posedness of this equation. Our analysis requires the solution $\widehat{\theta}$ to be sufficiently regular. We consider the case when $\widehat{\theta} \in C^{\infty}$ to simplify the presentation (although a careful inspection of our proofs shows that only a finite number of derivates are required). This is the case when $\theta \in C^{\infty}(\mathcal{X})$. Indeed, the solution $\widehat{\theta}$ is then well defined (for instance, by considering $\mathcal{L}$ on $L_0^2(\mu)$ and noting that this operator is invertible and has a compact resolvent), and in $C^{\infty}$ by elliptic regularity~\cite{GT01}. For convenience, we denote in the sequel
\[
\mathcal{S} = C^{\infty}(\mathcal{X}), \qquad \mathcal{S}_0 = \Pi\mathcal{S} = \{\theta \in \mathcal{S}~:~ \mu(\theta) = 0\}.
\]

\begin{remark}
Our analysis can be extended to degenerate stochastic dynamics for which the space $\mathcal{S}_0$ is invariant under the operator $\mathcal{L}^{-1}$ (in the sense that, for any $\theta \in \mathcal{S}_0$, it holds $\mathcal{L}^{-1}\theta \in \mathcal{S}_0$). This is the case for instance for dynamics with hypoelliptic generators on compact spaces (as considered in \cite{mattingly2010convergence}), or underdamped Langevin dynamics on bounded or unbounded position spaces \cite{talay2002stochastic,kopec2015weak}, upon changing the definition of~$\mathcal{S}$ to the space of $C^\infty$ functions growing at most polynomially at infinity, and whose derivatives also grow at most polynomially at infinity.
\end{remark}


We are now in position to reformulate the linear response with the solution of the Poisson equation \eqref{eqn:continuous-Poisson-equation}, which is a direct consequence of \eqref{eqn:rho_theta_first_def}.

\begin{proposition}\label{thm:alternative-linear-response-formula}
For any $\theta \in \mathcal{S}$, the linear response $\rho(\theta)$ can be written as 
\[
\rho(\theta) = \int_{\mathcal{X}} F(x)^{\TR} \nabla \widehat{\theta}(x) \,\mu(dx),
\]
where $F(x)^{\TR}$ denotes the transpose of $F(x)$.
\end{proposition}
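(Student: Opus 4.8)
The plan is to obtain the formula by a direct substitution in \eqref{eqn:rho_theta_first_def}, feeding in only two ingredients: the continuous Poisson equation \eqref{eqn:continuous-Poisson-equation} and the explicit form $\widetilde{\mathcal{L}} = F \cdot \nabla$ of the perturbation operator. No new estimate is required, so the statement is genuinely a corollary of the representation \eqref{eqn:rho_theta_first_def} provided by \Cref{thm:invariant-measure-expansion}.

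First I would rewrite the inner operator using the Poisson equation. Since $\mu(\theta)$ is a constant and constants lie in the kernel of $\mathcal{L}$, the function $\theta - \mu(\theta) = \Pi\theta$ belongs to $\mathcal{S}_0$, on which $\mathcal{L}$ is invertible with smooth inverse (this is the well-posedness recalled in \Cref{sec:continuous_Poisson}). Rearranging $-\mathcal{L}\widehat{\theta} = \theta - \mu(\theta)$ as $\mathcal{L}\widehat{\theta} = -\Pi\theta$ and inverting yields the identity $\mathcal{L}^{-1}\left[\theta - \mu(\theta)\right] = -\widehat{\theta}$. Here $\widehat{\theta} \in \mathcal{S}$ is determined only up to an additive constant, but this ambiguity is harmless, since only $\nabla\widehat{\theta}$ will enter the final expression.

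Next I would apply $\widetilde{\mathcal{L}}$ to this identity. By its explicit form, $\widetilde{\mathcal{L}}\mathcal{L}^{-1}\left[\theta - \mu(\theta)\right] = -\widetilde{\mathcal{L}}\widehat{\theta} = -F^{\TR}\nabla\widehat{\theta}$, which is a smooth, hence $\mu$-integrable, function because $\widehat{\theta} \in C^{\infty}$. Substituting back into \eqref{eqn:rho_theta_first_def}, the two sign changes cancel and I arrive at $\rho(\theta) = \int_{\mathcal{X}} F^{\TR}\nabla\widehat{\theta}\,\mu(dx)$, which is the claimed formula.

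The proof carries no analytic difficulty; the only points deserving care are bookkeeping ones, and the one I regard as the main subtlety is identifying the correct integrating measure. To confirm that the relevant integral is taken against $\mu$, one should trace the measure back through the $L^2(\mu)$-adjoint appearing in \Cref{thm:invariant-measure-expansion}: differentiating $\mu^{\epsilon}(\theta) = \int_{\mathcal{X}}\theta\,g^{\epsilon}\,\mu(dx)$ at $\epsilon = 0$ gives $\rho(\theta) = -\int_{\mathcal{X}}\theta\,(\widetilde{\mathcal{L}}\mathcal{L}^{-1})^{*}\mathbf{1}\,\mu(dx)$, and moving the adjoint across the $L^2(\mu)$ pairing (using $\Pi\theta \in L_0^2(\mu)$) reproduces the integrand $-\widetilde{\mathcal{L}}\mathcal{L}^{-1}\Pi\theta$ of \eqref{eqn:rho_theta_first_def}, now unambiguously integrated against $\mu$, to which the two substitutions above then apply.
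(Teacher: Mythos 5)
Your proof is correct and follows the same route the paper intends: the paper offers no separate argument, stating only that the proposition is a direct consequence of \eqref{eqn:rho_theta_first_def}, and your substitution of $\mathcal{L}^{-1}[\theta-\mu(\theta)]=-\widehat{\theta}$ together with $\widetilde{\mathcal{L}}=F\cdot\nabla$ is exactly that computation. Your closing remark is also well taken: \eqref{eqn:rho_theta_first_def} as printed integrates against $dx$ rather than $\mu(dx)$, and your derivation via the $L^2(\mu)$-adjoint in Theorem~\ref{thm:invariant-measure-expansion} correctly identifies $\mu$ as the intended measure, which is what makes the final formula consistent.
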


\subsection{The likelihood ratio method}
We derive the likelihood ratio method for linear response estimation in this section.
Let us denote by $\mathbb{P}$ the path-space probability measure induced by the process $X(t)$.
In view of Assumption~\ref{assume:PD-diffusion}, the vector $\sigma(X(t))^{-1}F(X(t))$ is well defined for all $t>0$.
In the sequel, we denote by $U$ the vector $\sigma^{-1}F$, which is in $\mathcal{S}$ by the above assumptions.
Let us mention that invertibility of $\sigma(X(t))$ is not necessary for the Girsanov change-of-measure theory (as long as there is a smooth function $U$ such that $\sigma U = F$) but we suppose it holds to simplify the mathematical analysis.
We introduce 
\begin{equation*}
L^{\epsilon}(t) = \exp\left(\epsilon\int_0^t U(X(s))\, dW(s) - \frac{\epsilon^2}{2}\int_0^t U(X(s))^{\TR} U(X(s)) \, ds  \right ),
\end{equation*}
and define the measure $\mathbb{P}^{\epsilon}$ such that
\[
\mathbb{P}^{\epsilon}(A) = \int_{A} L^{\epsilon}(t, \omega) \mathbb{P}(d\omega)
\]
for any $\mathcal{F}_t$ measurable set $A$.
For fixed $t > 0$, the Novikov condition is trivially satisfied by our assumptions. 
This implies that $L^{\epsilon}(t)$ is a $\mathcal{F}_t$ martingale with mean~1 and hence
the above defined measure $\mathbb{P}^{\epsilon}$ is a probability measure that coincides with the path-space probability measure of the perturbed process $X^{\epsilon}(t)$ (see for instance \cite{oksendal2013stochastic}).
By the above change-of-measure, we immediately have
\[
\mathbb{E}^{\epsilon}\left\{\frac{1}{t} \int_0^t \theta(X(s)) \, ds\right\} = \mathbb{E}
\left\{ \left(\frac{1}{t} \int_0^t \theta(X(s)) \, ds\right) L^{\epsilon}(t)\right\}.
\]

\begin{remark}
Throughout this paper, the expectation $\mathbb{E}$ and the variance $\mathrm{Var}$ are taken with respect to the initial distribution $\MUINI$ and over all realizations of the reference dynamics \eqref{eqn:SDE}.
\end{remark}

Assuming that we can differentiate with respect to $\epsilon$ inside the expectation $\mathbb{E}$ around $\epsilon = 0$ (see for instance \cite{asmussen2007stochastic, wang2018validity}), it holds
\begin{equation*}
    \frac{d}{d\epsilon}\left[ \mathbb{E}^{\epsilon}\left\{\frac{1}{t}\int_0^t \theta(X(s)) \,ds\right\} \right] = \mathbb{E}\left\{\left(\frac{1}{t}\int_0^t \theta(X(s)) \,ds \right) Z(t)\right\},
\end{equation*}
where 
\begin{equation}\label{eqn:Z}
Z(t) \triangleq \frac{d}{d\epsilon}L^{\epsilon}(t) =  \int_0^t U(X(s))\, dW(s)
\end{equation}
is referred to as the weight process for linear response.
Note that the weight process $Z(t)$ is a zero mean $\mathcal{F}_t$ martingale.
The above derivation suggests using the LR estimator
\[
\left( \frac{1}{t}\int_0^t \theta(X(s))\, ds \right) Z(t)
\]
to approximate the linear response index $\rho(\theta)$, upon choosing $t$ large enough.
As hinted at in the introduction, there exists a simple modification of the LR estimator which consists in centering it around the steady state average $\mu(\theta)$, in order for the variance of this estimator to be bounded. More precisely, we consider the following CLR estimator 
\begin{equation}\label{eqn:continuous-CLR-estimator}
\left( \frac{1}{t}\int_0^t (\theta(X(s)) - \mu(\theta))\, ds\right) Z(t).
\end{equation}
The following theorem states the consistency of CLR estimator.
Its proof demonstrates the interest 
of the Poisson equation \eqref{eqn:continuous-Poisson-equation} in studying the asymptotic limit of time averages.

\begin{theorem}\label{thm:LR-unbiasedness}
For any observable $\theta \in \mathcal{S}$,
\[
\lim_{t \to \infty}\mathbb{E}\left\{ \left(\frac{1}{t}\int_0^t \left(\theta(X(s)) - \mu(\theta)\right)\, ds\right) Z(t)\right\} = \rho(\theta).
\]
\end{theorem}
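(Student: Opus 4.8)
The plan is to use the Poisson equation \eqref{eqn:continuous-Poisson-equation} to rewrite the time integral $\int_0^t(\theta(X(s)) - \mu(\theta))\,ds$ as a martingale plus a bounded remainder, and then to evaluate the expectation of its product with the weight $Z(t)$ through the It\^o isometry. First I would let $\widehat{\theta} \in \mathcal{S}$ denote the solution of $-\mathcal{L}\widehat{\theta} = \theta - \mu(\theta)$ and apply the It\^o formula to $\widehat{\theta}(X(t))$. Since $\mathcal{L}\widehat{\theta} = \mu(\theta) - \theta$, this yields the decomposition
\[
\int_0^t (\theta(X(s)) - \mu(\theta))\,ds = \widehat{\theta}(X(0)) - \widehat{\theta}(X(t)) + M(t), \qquad M(t) = \sum_{k=1}^d\int_0^t \mathcal{K}^k\widehat{\theta}(X(s))\,dW^k(s),
\]
where $M(t)$ is a zero-mean $\mathcal{F}_t$-martingale. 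Multiplying by $t^{-1}Z(t)$ and taking expectations splits the estimator into a remainder contribution built from $R(t) := \widehat{\theta}(X(0)) - \widehat{\theta}(X(t))$ and a martingale contribution $t^{-1}\mathbb{E}\{M(t)Z(t)\}$.

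For the remainder term, I would note that $R(t)$ is bounded by $2\|\widehat{\theta}\|_{\infty} < \infty$, since $\widehat{\theta}\in C^{\infty}(\mathcal{X})$ on the compact torus, while $Z(t)$ satisfies $\mathbb{E}\{Z(t)^2\} = \mathbb{E}\{\int_0^t |U(X(s))|^2\,ds\} \le \|U\|_\infty^2\, t$ by the It\^o isometry. A Cauchy--Schwarz estimate then gives $|\mathbb{E}\{R(t)Z(t)\}| \le C\sqrt{t}$, so that $t^{-1}\mathbb{E}\{R(t)Z(t)\} \to 0$ as $t \to \infty$.

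For the martingale term, the cross-variation identity (the It\^o isometry applied to two stochastic integrals driven by the same Brownian motion) gives
\[
\frac{1}{t}\,\mathbb{E}\{M(t)Z(t)\} = \frac{1}{t}\int_0^t \mathbb{E}\left\{\sum_{k=1}^d \mathcal{K}^k\widehat{\theta}(X(s))\,U^k(X(s))\right\}ds.
\]
The key algebraic simplification exploits $\sigma U = F$: writing $\mathcal{K}^k\widehat{\theta} = \sum_i \sigma^{ik}\partial_i\widehat{\theta}$ and summing against $U^k = (\sigma^{-1}F)^k$ collapses the sum over $k$ via $\sum_k \sigma^{ik}(\sigma^{-1})^{kj} = \delta^{ij}$, leaving the integrand equal to $F^\TR\nabla\widehat{\theta}$.

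Finally I would invoke ergodicity to pass to the limit: since the law of $X(s)$ converges to $\mu$ (exponentially fast and uniformly in the initial data, for the non-degenerate diffusion on the compact torus under Assumptions~\ref{assume:bounded-config-space}--\ref{assume:PD-diffusion}), the one-time expectation $\mathbb{E}\{F^\TR\nabla\widehat{\theta}(X(s))\}$ converges to $\mu(F^\TR\nabla\widehat{\theta})$, and hence so does its time average over $[0,t]$. By Proposition~\ref{thm:alternative-linear-response-formula} this limit equals $\rho(\theta)$, which completes the argument. I expect the main obstacle to be the rigorous justification of this last passage to the limit, namely the convergence of the time-averaged one-time expectation to the stationary average; the Poisson-equation structure and the uniform exponential mixing of the elliptic dynamics are precisely what make this step go through.
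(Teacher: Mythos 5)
Your proposal is correct and follows essentially the same route as the paper: the Poisson equation plus It\^o's formula to split the time integral into a bounded remainder and the martingale $\int_0^t \mathcal{K}\widehat{\theta}(X(s))\,dW(s)$, a Cauchy--Schwarz/It\^o-isometry bound showing the remainder contribution is $\mathcal{O}(t^{-1/2})$, the cross-variation identity collapsing $\mathcal{K}\widehat{\theta}\cdot U$ to $F^\TR\nabla\widehat{\theta}$, and ergodicity plus Proposition~\ref{thm:alternative-linear-response-formula} to conclude. The only cosmetic difference is the final limit, where you use convergence of the marginal laws while the paper invokes the pathwise ergodic theorem with dominated convergence; both are valid.
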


In fact, convergence rates in terms of inverse powers of~$t$ can be stated, but we refrain from doing so.
We only prove the result for the one-dimensional case $d = 1$. The generalization to the multi-dimensional case is straightforward. 

\begin{proof}
Throughout this proof and the following ones, $C$ is a generic positive constant, which depends only on $\theta$.
In view of the continuous time Poisson equation \eqref{eqn:continuous-Poisson-equation}, the  expectation of the CLR estimator can be rewritten as 
\begin{equation}\label{eqn:continuous-expectation-decomposition}
\begin{aligned}
-\mathbb{E}\left\{\left(\frac{1}{t}\int_0^t\mathcal{L} \widehat{\theta}(X(s)) \, ds\right) Z(t)\right\}
=  - \mathbb{E}\left\{\frac{1}{t} \left[ \widehat{\theta}(X(t)) - \widehat{\theta}(X(0)) \right] Z(t)\right\} & \\
+ \frac{1}{t}\mathbb{E}\left\{ \left[ \widehat{\theta}(X(t)) - \widehat{\theta}(X(0)) - \int_0^t \mathcal{L}\widehat{\theta}(X(s)) \,ds \right] Z(t)  \right\}. & 
\end{aligned}
\end{equation}
By the Cauchy--Schwarz inequality and It\^{o}'s isometry, 
\begin{equation}\label{eqn:bound_Z_order2}
\mathbb{E}\{|Z(t)| \} \leq \mathbb{E}\{Z(t)^2\}^{1/2} = \mathbb{E}\left\{\int_0^t U(X(s))^2\, ds\right\}^{1/2} \leq C \sqrt{t}.
\end{equation}
Since $\widehat{\theta}$ is bounded (see Section~\ref{sec:continuous_Poisson}), 
\[
\mathbb{E}\left\{\frac{1}{t}\left[\widehat{\theta}(X(t)) - \widehat{\theta}(X(0))\right] Z(t)\right\} 
\leq \frac{C}{t} \mathbb{E}\{|Z(t)|\} \leq \frac{C}{\sqrt{t}},
\]
which converges to zero as $t \to \infty$.
Consider now the second term on the right-hand side of~\eqref{eqn:continuous-expectation-decomposition}. By It\^o's formula,
\[
\widehat{\theta}(X(t)) - \widehat{\theta}(X(0)) - \int_0^t \mathcal{L}\widehat{\theta}(X(s)) \,ds 
= 
\int_0^t \mathcal{K}\widehat{\theta}(X(s))\, dW(s),
\]
which is a $\mathcal{F}_t$ martingale.
Recall that $Z(t) = \int_0^t U(X(s))\, dW(s)$ and both $\mathcal{K}\widehat{\theta}(X(s))$ and $U(X(s))$ are square integrable with respect to the product measure $dt \times \mathbb{P}(d\omega)$. Therefore,
\[
\mathbb{E}\left\{\int_0^t \mathcal{K}\widehat{\theta}(X(s))\, dW(s) \int_0^t U(X(s))\, dW(s)\right\}
=
\mathbb{E}\left\{ \int_0^t F(X(s)) \widehat{\theta}^{\prime}(X(s)) \, ds \right\}.
\]
Now, the continuous time Poisson solution $\widehat{\theta}$ is in $\mathcal{S}_0$ (see Section~\ref{sec:continuous_Poisson}), hence the ergodicity of $X(t)$ implies that
\[
\lim_{t\to\infty} \frac{1}{t}\int_0^t F(X(s)) \widehat{\theta}^{\prime}(X(s)) \, ds = \int_{\mathcal{X}} F(x) \widehat{\theta}^{\prime}(x)\, \mu(dx)
\]
almost surely. 
Finally, the desired result follows by dominated convergence and Proposition~\ref{thm:alternative-linear-response-formula}.
\end{proof}

The result roughly says that the average response to a perturbation of the dynamics~\eqref{eqn:SDE} can be computed from the unperturbed dynamics by
re-weighting the observable with the weight process $Z(t)$.
The next result shows that the variance of the CLR estimator remains bounded in terms of the integration time, which is a desirable feature for long time simulation. 
\begin{theorem}\label{thm:LR-variance}
For any observable $\theta \in \mathcal{S}$, there exists a constant $C>0$ such that
\[
\forall t > 0, \qquad
\mathrm{Var}\left\{\left(\frac{1}{t}\int_0^t \left(\theta(X(s)) - \mu(\theta)\right)\, ds\right) Z(t)\right\} \leq C.
\]
\end{theorem}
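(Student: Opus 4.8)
The plan is to reduce the variance bound to a second-moment estimate and to reuse the Poisson-equation decomposition from the proof of Theorem~\ref{thm:LR-unbiasedness}. Writing the estimator as
\[
M(t) = \left(\frac{1}{t}\int_0^t (\theta(X(s)) - \mu(\theta))\,ds\right) Z(t),
\]
the variance equals $\mathbb{E}\{M(t)^2\} - \mathbb{E}\{M(t)\}^2$. Since Theorem~\ref{thm:LR-unbiasedness} (and its proof) shows that $\mathbb{E}\{M(t)\}$ stays bounded as $t \to \infty$, it suffices to bound $\mathbb{E}\{M(t)^2\}$ uniformly in $t$. To this end I would use the Poisson equation $-\mathcal{L}\widehat{\theta} = \theta - \mu(\theta)$ together with It\^o's formula to write
\[
\frac{1}{t}\int_0^t (\theta(X(s)) - \mu(\theta))\,ds = \frac{1}{t}\left[ N(t) - \left(\widehat{\theta}(X(t)) - \widehat{\theta}(X(0))\right)\right], \qquad N(t) = \int_0^t \mathcal{K}\widehat{\theta}(X(s))\,dW(s).
\]
Thus $M(t) = t^{-1}\left(N(t) - A(t)\right)Z(t)$, where $A(t) = \widehat{\theta}(X(t)) - \widehat{\theta}(X(0))$ is bounded because $\widehat{\theta} \in \mathcal{S}_0 \subset C^\infty(\mathcal{X})$ is bounded, and where $N(t)$ and $Z(t)$ are martingales with smooth, hence bounded, integrands $\mathcal{K}\widehat{\theta}$ and $U$.

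The key technical input is a fourth-moment control of these two martingales. Since $U$ and $\mathcal{K}\widehat{\theta}$ are bounded on the compact state space, the Burkholder--Davis--Gundy inequality (or, equivalently, applying It\^o's formula to $Z^4$ and invoking Gr\"onwall's lemma together with the $\mathcal{O}(t)$ bound on $\mathbb{E}\{Z(t)^2\}$ already established in~\eqref{eqn:bound_Z_order2}) gives
\[
\mathbb{E}\{Z(t)^4\} \leq C t^2, \qquad \mathbb{E}\{N(t)^4\} \leq C t^2.
\]
Expanding $\mathbb{E}\{M(t)^2\} = t^{-2}\,\mathbb{E}\{(N(t) - A(t))^2 Z(t)^2\}$ into the three contributions coming from $N(t)^2$, $N(t)A(t)$ and $A(t)^2$, I would bound each by Cauchy--Schwarz. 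The cross term is controlled by $\|A\|_\infty\, \mathbb{E}\{N(t)^2\}^{1/2}\mathbb{E}\{Z(t)^4\}^{1/2} \leq C t^{3/2}$, and the last term by $\|A\|_\infty^2\, \mathbb{E}\{Z(t)^2\} \leq C t$; after division by $t^2$ both vanish as $t \to \infty$.

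The dominant contribution is the $N(t)^2 Z(t)^2$ term, for which Cauchy--Schwarz and the fourth-moment bounds give
\[
\mathbb{E}\{N(t)^2 Z(t)^2\} \leq \mathbb{E}\{N(t)^4\}^{1/2}\,\mathbb{E}\{Z(t)^4\}^{1/2} \leq C t^2,
\]
so that $t^{-2}\mathbb{E}\{N(t)^2 Z(t)^2\} \leq C$ uniformly in $t$. Combining the three bounds yields $\mathbb{E}\{M(t)^2\} \leq C$, and hence the variance is bounded. The conceptual crux, rather than any single estimate, is that centering around $\mu(\theta)$ turns the time-average factor into $t^{-1}(N(t) - A(t))$, which decays like $t^{-1/2}$ in $L^2$ and exactly compensates the $\sqrt{t}$ growth of $Z(t)$; without the centering the time average would converge to the nonzero constant $\mu(\theta)$ and the variance would grow linearly in $t$. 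The one point requiring care is the fourth-moment estimate on $N(t)$ and $Z(t)$, which is where the compactness of $\mathcal{X}$ and smoothness of the coefficients enter to guarantee the integrands are bounded.
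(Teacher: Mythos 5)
Your proposal is correct and follows essentially the same route as the paper: the Poisson-equation/It\^o decomposition of the centered time average into $t^{-1}(N(t)-A(t))$, followed by Cauchy--Schwarz and fourth-moment bounds on the martingales $N(t)$ and $Z(t)$ via Burkholder--Davis--Gundy. The only cosmetic differences are that the paper bounds the second moment directly (using $(a+b)^2\leq 2a^2+2b^2$ rather than expanding the cross term) and does not bother to subtract $\mathbb{E}\{M(t)\}^2$, since the variance is dominated by the second moment anyway.
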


\begin{proof}
Using the decomposition \eqref{eqn:continuous-expectation-decomposition} as in the proof of the last theorem and the Cauchy-Schwarz inequality, we bound the 
second moment of the CLR estimator by 
\[
\frac{2}{t^2}\mathbb{E}\left\{\left[\widehat{\theta}(X(t)) - \widehat{\theta}(X(0))\right]^2 Z(t)^2\right\}
+ \frac{2}{t^2}\mathbb{E}\left\{\left[\int_0^t \mathcal{K}\widehat{\theta}(X(s))\, dW(s) \right]^2 Z(t)^2\right\}.
\]
In view of \eqref{eqn:bound_Z_order2}, the first term can be 
simply further bounded by $C/t$. It remains to bound the second term. We first apply the Cauchy-Schwarz inequality to obtain
\[
\mathbb{E}\left\{\left[\int_0^t \mathcal{K}\widehat{\theta}(X(s))\, dW(s) \right]^2 Z(t)^2\right\}
\leq
\mathbb{E}\left\{\left[\int_0^t \mathcal{K}\widehat{\theta}(X(s))\, dW(s)\right]^4\right\}^{1/2}\mathbb{E}\left\{Z(t)^4\right\}^{1/2}.
\]
By the Burkholder-Davis-Gundy inequality~\cite{protter2005stochastic},
\[
\mathbb{E}\left\{\left[\int_0^t \mathcal{K}\widehat{\theta}(X(s))\, dW(s)\right]^4\right\}
\leq
C\mathbb{E}\left\{\left(\int_0^t \left[ \mathcal{K}\widehat{\theta}(X(s)) \right]^2 \, ds\right)^2 \right\} \leq Ct^2,
\]
where we have used the fact that $\mathcal{K}\widehat{\theta}$ is uniformly bounded on the state space $\mathcal{X}$.
Similarly, we have
\[
\mathbb{E}\left\{Z(t)^4\right\} \leq C\mathbb{E}\left\{ \left[\int_0^t U(X(s))^2 \, ds\right]^2\right\} \leq C t^2.
\]
Taking the square root of the above estimates and then re-scaling them by $t^2$ leads to the desired bound. 
\end{proof}



\section{The discrete dynamics approximation}\label{sec:discrete-dynamics}
Theorem~\ref{thm:LR-unbiasedness} justifies that both the LR and CLR estimators are asymptotically unbiased. However, in practice, we need to introduce a time step $\DT$ to discretize the continuous dynamics $X(t)$ and obtain a discrete time dynamics $X_n$.
In this section we present a discrete numerical approximation to the continuous dynamics $X(t)$. Furthermore, we establish ergodicity results for the resulting discrete Markov chain.

\subsection{Weak numerical schemes}
A weak numerical scheme that discretizes~$X(t)$ generates a discrete time Markov chain $X_n$ with evolution operator 
\begin{equation}\label{eqn:evolution-operator}
\left(P_{\DT}\theta\right)(x) \triangleq \mathbb{E}_{\DT}\{\theta (X_{n+1}) ~|~ X_n = x\}
\end{equation}
for any $\theta \in \mathcal{S}$.

\begin{remark}
Throughout this paper, in order to alleviate the notation, we denote by $\varphi_n = \varphi(X_n)$ for a given function $\varphi$.
The expectation $\mathbb{E}_{\DT}$ and the variance $\mathrm{Var}_{\DT}$ are taken with respect to the initial distribution $\MUINI$ and over all realizations of the discrete time Markov chain $X_n$ with time step $\DT$.

Furthermore, in order to keep the presentation of calculations in the proofs simple we treat the scalar case ($d=1$) in analysis of the CLR estimator.  We detail the algebraic calculation for the multi-dimensional case in Appendix~\ref{sec:appendix}. In the multi-dimensional case the analysis and proofs generalize directly for the weak first-order CLR estimator. However, in the case of the second-order CLR estimator it is necessary to assume that the noise coefficient $\sigma$ is constant.
\end{remark}

For the ease of exposition, we consider particular weak first and second order schemes that discretize the process $X(t)$. Specifically, we focus on the Euler-Maruyama scheme 
\begin{equation}\label{eqn:first-order-scheme}
    X_{n+1} = X_n + b_n \DT + \sigma_n \Delta W_n,
\end{equation}
for the weak first order scheme, where 
\[
\Delta W_n \stackrel{d}{=} W((n+1)\DT) - 
W(n\DT) \sim \mathcal{N}(0, \DT\mathrm{Id}_d).
\]
For the weak second order scheme, we consider 
\begin{equation}\label{eqn:second-order-scheme}
\begin{split}
    X_{n+1}^{i} = X_n^{i} &+ b_n^i \DT
    + \sum_{k=1}^{d}\sigma_n^{ik} \Delta W_n^k
    + \frac{1}{2}\sum_{k=1}^d(\mathcal{L}\sigma_n^{ik} + \mathcal{K}^k b_n^i) \DT \Delta W_n^k\\
    &+ \frac{1}{2}\sum_{k_1, k_2=1}^d \mathcal{K}^{k_1} \sigma_n^{i k_2} \left(\Delta W_n^{k_1} \Delta W_n^{k_2} + V_n^{k_1k_2}\right)
    + \frac{1}{2}\mathcal{L}b_n^i\DT^2,  \qquad i = 1, \ldots, d
\end{split}
\end{equation}
derived from the second order It\^{o}-Taylor expansion, where $\Delta W_n^k$ is the $k$th component of $\Delta W_n$ and $V_n^{k_1k_2}$ are independent random variables with
\begin{equation}\label{eqn:V-dist}
\begin{split}
&\mathbb{P}(V_n^{k_1k_2} = \pm \DT) = \frac{1}{2}, \qquad k_2 = 1, \ldots, k_1 - 1,\\
&V_n^{k_1k_2} = -\DT, \qquad \qquad k_2 = k_1,\\
&V_n^{k_2k_1} = -V_n^{k_1k_2} \qquad k_2 = k_1 + 1, \ldots, d.
\end{split}
\end{equation}
See, for instance \cite{kloeden2013numerical} for a derivation of the above scheme. 
In the sequel, we denote by $\Phi_{\DT}$ the increment function such that
\[
X_{n+1} = X_n + \Phi_{\DT}(X_n,\Delta W_n,V_n).
\]

\begin{remark}
  In fact, for the first order scheme~\eqref{eqn:first-order-scheme}, there is no need for considering $V_n$ in the argument, and we will therefore simply write $X_{n+1} = X_n + \Phi_{\DT}(X_n,\Delta W_n)$. Actually, we will often write $\Phi_{\DT,n}$ instead of $\Phi_{\DT}(X_n,\Delta W_n)$ to further simplify the notation. 
  We also use the same notation (e.g., $X_n, P_{\DT}, \widehat{\theta}, \Phi_{\DT}$, etc) both for the first and second order schemes.
  The weak order of the corresponding scheme associated with these notations should be clear from the context. 
\end{remark}

Next, we provide the consistency of numerical time-averaging using the above two schemes. 
The proof is essentially the same as that in \cite{mattingly2010convergence}.
The only difference is that our estimates are uniform for a family of smooth functions (typically indexed by the time step $\DT$), which turns out to be crucial for our analysis of the CLR estimator in the next section. The norms $\|\cdot\|_{C^k}$ for $k\geq 1$ are the standard norms associated with the Banach spaces of $C^k$ functions on~$\mathcal{X}$.

\begin{proposition}\label{thm:time-averaging-consistency}
There exists a constant $h^* > 0$ and $C \in \mathbb{R}_+$ such that, for any $\DT \in (0,h^*]$ and any $\varphi \in \mathcal{S}$,
\[
\left|\frac{1}{N}\sum_{n=0}^{N-1} \mathbb{E}_{\DT}\left\{ \varphi_{n}\right\}
- \mu(\varphi)\right| \leq C \|\varphi\|_{C^{2p}}\left(\DT^p + \frac{1}{N\DT}\right),
\]
where $p = 1$ for the first order scheme~\eqref{eqn:first-order-scheme} and $p = 2$ for the second order scheme~\eqref{eqn:second-order-scheme}. 
\end{proposition}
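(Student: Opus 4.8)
The plan is to exploit the continuous-time Poisson equation of Section~\ref{sec:continuous_Poisson} together with a discrete Abel (telescoping) summation, following the strategy of~\cite{mattingly2010convergence}. Since the left-hand side is unchanged upon replacing $\varphi$ by $\Pi\varphi = \varphi - \mu(\varphi)$, I would first reduce to the mean-zero case and introduce the solution $\widehat{\varphi} \in \mathcal{S}_0$ of $-\mathcal{L}\widehat{\varphi} = \Pi\varphi$. Elliptic regularity (as invoked in Section~\ref{sec:continuous_Poisson}) provides the key quantitative input $\|\widehat{\varphi}\|_{C^{k+2}} \leq C\|\varphi\|_{C^k}$, which is what ultimately converts every bound into the advertised factor $\|\varphi\|_{C^{2p}}$.

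The heart of the argument is a one-step \emph{local weak error} expansion for the evolution operator,
\[
P_{\DT}\theta = \theta + \DT\,\mathcal{L}\theta + \frac{\DT^2}{2}\mathcal{L}^2\theta + \cdots + \frac{\DT^p}{p!}\mathcal{L}^p\theta + R_{\DT}\theta, \qquad \|R_{\DT}\theta\|_\infty \leq C\DT^{p+1}\|\theta\|_{C^{2(p+1)}},
\]
uniform in the state, which follows from the It\^o--Taylor analysis of the schemes~\eqref{eqn:first-order-scheme} and~\eqref{eqn:second-order-scheme} (the random variables $V_n$ in~\eqref{eqn:V-dist} are chosen precisely so that the moments are matched at order $p=2$). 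Applying this to $\theta = \widehat{\varphi}$ and summing, I would use the crucial telescoping identity $\sum_{n=0}^{N-1}\mathbb{E}_{\DT}\{(P_{\DT}\widehat{\varphi} - \widehat{\varphi})(X_n)\} = \mathbb{E}_{\DT}\{\widehat{\varphi}(X_N)\} - \mathbb{E}_{\DT}\{\widehat{\varphi}(X_0)\}$, whose right-hand side is bounded by $2\|\widehat{\varphi}\|_\infty$ \emph{irrespective of any ergodicity estimate}. This $\mathcal{O}(1)$ boundary term, divided by $N\DT$, is exactly the source of the $1/(N\DT)$ contribution, while the accumulated remainder $\sum_n R_{\DT}\widehat{\varphi}$, of size $N\,\DT^{p+1}$, produces the $\DT^p$ contribution after the same division.

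For the first-order scheme ($p=1$) this closes immediately, since $P_{\DT}\widehat{\varphi} - \widehat{\varphi} = -\DT\,\Pi\varphi + R_{\DT}\widehat{\varphi}$. For the second-order scheme ($p=2$) the expansion carries an extra term $\tfrac{\DT^2}{2}\mathcal{L}^2\widehat{\varphi}$; the key observation is that $\mathcal{L}^2\widehat{\varphi} = -\mathcal{L}\varphi$ again lies in $\mathcal{S}_0$, because $\mu$ is $\mathcal{L}$-invariant, so I can \emph{bootstrap} by applying the already-established $p=1$ estimate to the mean-zero observable $\mathcal{L}^2\widehat{\varphi}$. This gives $\frac1N\sum_n\mathbb{E}_{\DT}\{\mathcal{L}^2\widehat{\varphi}(X_n)\} = \mathcal{O}\big(\DT + (N\DT)^{-1}\big)$, whence the offending term contributes only $\mathcal{O}\big(\DT^2 + (N\DT)^{-1}\big)$; the derivative count closes because the $p=1$ estimate applied to $\mathcal{L}\varphi$ costs $\|\mathcal{L}\varphi\|_{C^2} \leq C\|\varphi\|_{C^4} = C\|\varphi\|_{C^{2p}}$.

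I expect the main obstacle to be the uniform local weak error estimate itself: one must verify the It\^o--Taylor expansion of each scheme to the correct order with a remainder that is uniform in the state and depends on $\theta$ only through $\|\theta\|_{C^{2(p+1)}}$, and then track carefully that every constant remains uniform over the (possibly $\DT$-dependent) family of smooth functions to which the estimate is applied. The remaining steps---reduction to mean zero, the telescoping, and the bootstrap---are then routine, and the threshold $h^*$ merely guarantees well-posedness of the scheme and validity of the one-step expansion.
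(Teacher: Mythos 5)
Your proposal is correct and follows essentially the same route as the paper's proof: the continuous-time Poisson equation with elliptic-regularity bounds $\|\widehat{\varphi}\|_{C^{k+2}}\leq C\|\varphi\|_{C^k}$, a one-step It\^o--Taylor (local weak error) expansion of $P_{\DT}\widehat{\varphi}$, telescoping to isolate the $\mathcal{O}(1/(N\DT))$ boundary term, and, for $p=2$, the bootstrap that handles the $\tfrac{\DT^2}{2}\mathcal{L}^2\widehat{\varphi}=-\tfrac{\DT^2}{2}\mathcal{L}\varphi$ term by applying the already-proved $p=1$ estimate to the mean-zero observable $\mathcal{L}\varphi$, with the same derivative count $\|\mathcal{L}\varphi\|_{C^2}\leq C\|\varphi\|_{C^4}$.
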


We state the estimate for $\varphi \in \mathcal{S}$ since the functions we will manipulate in the proofs will always belong to the latter functional space, but the above estimate can of course be extended by density to any function in~$C^{2p}$.

\begin{proof}
  We follow the proof of~\cite{mattingly2010convergence}.
  We denote by $C \in \mathbb{R}_+$ a generic constant that may change line by line.  
  We first prove the statement for $p = 1$. Recall also that we write the proof in the one-dimensional setting $d=1$ for simplicity, but it can straightforwardly be extended to spaces of higher dimensions.
  Fix $\varphi \in \mathcal{S}$ and denote by $\widehat{\varphi}$ the solution to the continuous time Poisson equation: 
  \begin{equation}
  \label{eq:continuous_Poisson_equation_varphi}
-\mathcal{L}\widehat{\varphi} = \varphi - \mu(\varphi).
\end{equation}
Recall that $\Phi_{\DT, n} = b_n \DT + \sigma_n \Delta W_n$
for the first order scheme. 
Since $\widehat{\varphi} \in \mathcal{S}_0$ (see Section~\ref{sec:continuous_Poisson}), we can expand 
\[
\widehat{\varphi}_{n+1} = \widehat{\varphi}_{n} + \widehat{\varphi}_{n}^{\prime}\Phi_{\DT, n} 
+ \frac{1}{2} \widehat{\varphi}_{n}^{(2)}\Phi_{\DT, n}^2
+ \frac{1}{6} \widehat{\varphi}_{n}^{(3)}\Phi_{\DT, n}^3 + r_{\widehat{\varphi}}(X_n),
\]
where 
\[
r_{\widehat{\varphi}}(X_n) = \left(\frac{1}{6}\int_0^1 u^3 \widehat{\varphi}^{(4)}(X_n + u \Phi_{\DT, n})\, du\right)\Phi_{\DT, n}^{4}.
\]
Taking expectation of both sides and rearranging terms leads to
\begin{equation*}
\begin{aligned}
\mathbb{E}_{\DT}\left\{\widehat{\varphi}_{n+1}\right\} 
& = \mathbb{E}_{\DT}\left\{\widehat{\varphi}_{n}\right\} 
+ \mathbb{E}_{\DT}\left\{\mathcal{L}\widehat{\varphi}_{n}\right\} \DT
+ \frac{1}{2}\mathbb{E}_{\DT}\left\{\widehat{\varphi}_{n}^{(2)}b_n^2 + \widehat{\varphi}_{n}^{(3)} b_n \sigma_n^2 \right\}\DT^2\\
& \ \ + \frac{1}{6}\mathbb{E}_{\DT}\left\{\widehat{\varphi}_{n}^{(3)} b_n^3 \right\}\DT^3
+ \mathbb{E}_{\DT}\left\{r_{\widehat{\varphi}}(X_n)\right\}.
\end{aligned}
\end{equation*}
Note that, by elliptic regularity~\cite{GT01}, the solution $\widehat{\varphi}_{\DT}$ to~\eqref{eq:continuous_Poisson_equation_varphi} and its derivatives (up to $4$th order here) can be bounded by $C_0 \|\varphi\|_{C^2}$, where $C_0 \in \mathbb{R}_+$ depends on the coefficients $b,\sigma$ in the SDE~\eqref{eqn:SDE} but is independent of~$\varphi$.
Also note that $b$, $\sigma$ and their derivatives are uniformly bounded.
There exists therefore some constant~$C$, independent of $\varphi$, such that
\[
\left| \mathbb{E}_{\DT}\left\{\widehat{\varphi}_{n+1}\right\} - \mathbb{E}_{\DT}\left\{\widehat{\varphi}_{n}\right\} 
- \mathbb{E}_{\DT}\left\{\mathcal{L}\widehat{\varphi}_{n}\right\} \DT \right| \leq C \| \varphi \|_{C^2} \DT^2.
\]
In view of the above inequality and the Poisson equation~\eqref{eq:continuous_Poisson_equation_varphi}, we obtain 
\begin{equation*}
\left| \mathbb{E}_{\DT}\left\{\varphi_{n}\right\} - \mu(\varphi) + \frac{1}{\DT}\mathbb{E}_{\DT}\left\{\widehat{\varphi}_{n+1} - \widehat{\varphi}_{n}\right\}\right| \leq C\|{\varphi}\|_{C^2} \DT.
\end{equation*}
Summing the terms between the absolute values of the above inequalities over $n$ and dividing by~$N$ gives 
\begin{equation}
  \label{eqn:first-order-time-averaging}
\left| \frac{1}{N}\sum_{n=0}^{N-1}\mathbb{E}_{\DT}\left\{\varphi_{n}\right\} - \mu(\varphi) + \frac{1}{N\DT}\mathbb{E}_{\DT}\left\{\widehat{\varphi}_{N} - \widehat{\varphi}_{0}\right\} \right| \leq C\| \varphi \|_{C^2} \DT.
\end{equation}
The desired estimate then follows immediately since $|\widehat{\varphi}_{N} - \widehat{\varphi}_{0}| \leq 2 C_0\| \varphi \|_{C^2}$ (in fact, it is possible to replace $\| \varphi \|_{C^2}$ by $\| \varphi \|_{C^0}$ in the latter inequality).

For the case $p = 2$, an estimate similar to~\eqref{eqn:first-order-time-averaging} holds:
\[
\left| \mathbb{E}_{\DT}\left\{\widehat{\varphi}_{n+1}\right\} - \mathbb{E}_{\DT}\left\{\widehat{\varphi}_{n}\right\} 
- \mathbb{E}_{\DT}\left\{\mathcal{L}\widehat{\varphi}_{n}\right\} \DT
- \frac{1}{2}\mathbb{E}_{\DT}\left\{\mathcal{L}^2\widehat{\varphi}_{n}\right\} \DT^2 \right|
\leq C \| \varphi \|_{C^4} \DT^3,
\]
where the remainder is now bounded by derivatives of~$\varphi$ of order~4 at most (since it involves derivatives of~$\widehat{\varphi}$ of order~6 at most). Combining the above estimate with the Poisson equation leads to
\begin{equation}\label{eqn:second-order-time-averaging}
\left| \frac{1}{N}\sum_{n=0}^{N-1}\mathbb{E}_{\DT}\left\{\varphi_{n}\right\} - \mu(\varphi) + \frac{\DT}{2N}\sum_{n=0}^{N-1}\mathbb{E}_{\DT}\left\{\mathcal{L}\varphi_{n}\right\} \right| \leq  C \left( \DT^2 + \frac{1}{N\DT} \right) \|{\varphi}\|_{C^4}.
\end{equation}
It remains to estimate the term of order $\DT$ on the left hand side of the above inequality.
To this end, we apply the estimate \eqref{eqn:first-order-time-averaging}
to the function $\mathcal{L}\varphi$ and use the fact that $\mu(\mathcal{L}\varphi) = 0$, which implies
\[
\left| \frac{1}{N}\sum_{n=0}^{N-1}\mathbb{E}_{\DT}\left\{\mathcal{L}\varphi_{n}\right\} \right|
\leq C \| \mathcal{L}\varphi \|_{C^2}\left(\DT + \frac{1}{N\DT}\right)
\leq C' \| \varphi \|_{C^4}\left(\DT + \frac{1}{N\DT}\right)
\]
for some constant $C^{\prime}>0$.
The desired error estimate finally follows by combining the above estimate with~\eqref{eqn:second-order-time-averaging}.
\end{proof}

Let us emphasize that the above result does not rely on the ergodicity of the discrete chain $X_n$ (see~\cite{mattingly2010convergence}). However, as will be seen in Section~\ref{sec:CLR-scheme}, we need some ergodicity to study the CLR estimator for linear response estimation. We therefore discuss the ergodicity and the discrete Poisson equation associated with the discrete chain $X_n$ in the remainder of this section. 

\subsection{Ergodicity of the discrete chain}
The existence and uniqueness of an invariant probability measure 
of a Markov chain, and its exponential ergodicity, can be obtained 
by assuming that the evolution operator $P_{\DT}$ satisfies both a Lyapunov condition and a minorization condition \cite{hairer2011yet, meyn2012markov}.
Here, the Lyapunov condition is trivially satisfied since the configuration space $\mathcal{X}$ is compact (the Lyapunov function being the constant function equal to $1$).
As for the minorization condition, we need a slightly stronger version than the usual minorization condition, which requires that the constant and the probability measure are independent of the time step $h$ provided it is sufficiently small.   
\begin{assumption}[Uniform minorization condition]\label{assume:minorization}
Given the evolution operator~$P_{\DT}$ associated with either \eqref{eqn:first-order-scheme} or \eqref{eqn:second-order-scheme} and a fixed final integration time $T>0$,
there exist a maximum time step $\DT^* > 0$, a constant $\eta>0$ and a probability measure $\lambda$, such that for any $0 < \DT \leq \DT^*$ and any $x \in \mathcal{X}$,
\begin{equation}\label{eqn:minorization}
    P_{\DT}^{\lceil {T / \DT} \rceil}(x, dy) \geq \eta \lambda(dy).
\end{equation}
\end{assumption}
We emphasize that the constants $\eta$ and the probability measure $\lambda$ are independent of the time step $\DT$ provided that $\DT\leq \DT^*$.
This assumption can be justified for some important cases. See for example \cite{bou2013nonasymptotic, fathi2015error, FS17, lelievre2016partial} for discretizations of overdamped Langevin dynamics and \cite{leimkuhler2015computation,redon2016error} for discretizations of underdamped Langevin dynamics. The strategy of proofs of these works can be straightforwardly adapted to the schemes we consider here since the diffusion matrix $\sigma \sigma^\TR$ is bounded below (in the sense of symmetric matrices) by a positive constant.

We are now in position to state the exponential ergodicity result directly obtained from~\cite{hairer2011yet}, which also provides the existence and uniqueness of the invariant probability measure of the Markov chain. To state it, we introduce the space $B^\infty$ of bounded measurable functions, endowed with the norm $\|\varphi\|_{B^\infty} = \sup_{x \in \mathcal{X}} |\varphi(x)|$.

\begin{theorem}\label{thm:ergodicity}
There exists a maximum time step $\DT^* > 0$ such that, for any $\DT \in (0, \DT^*]$, the Markov chain associated with $P_{\DT}$ has a unique invariant measure $\mu_{\DT}$.
Furthermore, there exist constants $\kappa, C > 0$ that are independent of $\DT$ such that, for any function $\theta \in B^{\infty}$,
\begin{equation}
  \forall m \in \mathbb{N}_+, \qquad
  \left\| P_{\DT}^m \theta - \mu_{\DT}(\theta)\right \|_{B^{\infty}} \leq C  \mathrm{e}^{-\kappa m \DT} \|\theta\|_{B^{\infty}}.
\end{equation}
\end{theorem}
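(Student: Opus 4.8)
The plan is to recognize that the statement is exactly the conclusion of the Harris/Doeblin ergodic theorem applied to the ``macro-kernel'' $Q = P_{\DT}^{N}$, where $N = \lceil T/\DT\rceil$, and then to transfer the per-macro-step contraction into a per-step (equivalently, per-unit-time) exponential decay with constants independent of $\DT$. The only ingredient beyond the general theory is Assumption~\ref{assume:minorization}, which furnishes, uniformly in $\DT \le \DT^*$, a global minorization $Q(x,dy) \ge \eta\,\lambda(dy)$; the companion Lyapunov condition required by~\cite{hairer2011yet} is vacuous here since $\mathcal{X}$ is compact (take the Lyapunov function to be $1$).

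First I would extract the Dobrushin contraction from the minorization. Writing $\mathrm{osc}(\varphi) = \sup_{\mathcal{X}}\varphi - \inf_{\mathcal{X}}\varphi$, the bound $Q(x,dy) \ge \eta\lambda(dy)$ gives $\|Q(x,\cdot) - Q(y,\cdot)\|_{\mathrm{TV}} \le 1-\eta$ for all $x,y$, hence $\mathrm{osc}(Q\varphi) \le (1-\eta)\,\mathrm{osc}(\varphi)$ for $\varphi \in B^\infty$. Dually, on the space of probability measures equipped with the total variation metric one has $\|\nu_1 Q - \nu_2 Q\|_{\mathrm{TV}} \le (1-\eta)\|\nu_1 - \nu_2\|_{\mathrm{TV}}$. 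Since this metric space is complete and $Q$ is a strict contraction, the Banach fixed point theorem yields a unique probability measure $\mu_{\DT}$ with $\mu_{\DT} Q = \mu_{\DT}$. To see that $\mu_{\DT}$ is invariant for $P_{\DT}$ itself, I would observe that $(\mu_{\DT} P_{\DT}) Q = (\mu_{\DT} Q) P_{\DT} = \mu_{\DT} P_{\DT}$, so $\mu_{\DT} P_{\DT}$ is also a fixed point of $Q$; uniqueness then forces $\mu_{\DT} P_{\DT} = \mu_{\DT}$.

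Next I would iterate the oscillation contraction, $\mathrm{osc}(Q^k\varphi) \le (1-\eta)^k\mathrm{osc}(\varphi)$, and note that since $\mu_{\DT}(Q^k\varphi) = \mu_{\DT}(\varphi)$ the constant $\mu_{\DT}(\varphi)$ lies between $\inf Q^k\varphi$ and $\sup Q^k\varphi$, whence $\|Q^k\varphi - \mu_{\DT}(\varphi)\|_{B^\infty} \le (1-\eta)^k \mathrm{osc}(\varphi) \le 2(1-\eta)^k\|\varphi\|_{B^\infty}$. To reach an arbitrary number of steps $m$, I would write $m = kN + r$ with $0 \le r < N$; using $\mu_{\DT}(\theta) = \mu_{\DT}(P_{\DT}^r\theta)$ together with the sup-norm contractivity $\|P_{\DT}^r\theta\|_{B^\infty}\le\|\theta\|_{B^\infty}$ gives
\[
\left\|P_{\DT}^m\theta - \mu_{\DT}(\theta)\right\|_{B^\infty}
= \left\|Q^k(P_{\DT}^r\theta) - \mu_{\DT}(P_{\DT}^r\theta)\right\|_{B^\infty}
\le 2(1-\eta)^k\|\theta\|_{B^\infty}.
\]

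The main obstacle, and the reason Assumption~\ref{assume:minorization} is stated with $\DT$-independent $\eta,\lambda$, is keeping $\kappa$ and $C$ uniform as $\DT \to 0$. Since $N = \lceil T/\DT\rceil \to \infty$, a naive reading of $(1-\eta)^k$ would suggest a vanishing per-step rate; the point is that $N\DT \in [T, T+\DT^*]$ stays bounded. Using $k \ge m/N - 1$, I would bound $(1-\eta)^k \le (1-\eta)^{-1}\exp\!\big(-m|\ln(1-\eta)|/N\big)$ and then, from $1/N \ge \DT/(T+\DT^*)$, conclude $(1-\eta)^k \le (1-\eta)^{-1} \mathrm{e}^{-\kappa m \DT}$ with $\kappa = |\ln(1-\eta)|/(T+\DT^*)$. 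This establishes the claim with both $C = 2/(1-\eta)$ and $\kappa$ independent of $\DT$.
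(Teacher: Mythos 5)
Your proposal is correct. The paper itself offers no written proof---it simply states that the result is ``directly obtained from'' the Harris-type theorem of Hairer and Mattingly, with the Lyapunov function taken to be constant and the uniform minorization of Assumption~\ref{assume:minorization} supplying the small-set condition---and your Doeblin contraction argument (oscillation/total-variation contraction for $Q = P_{\DT}^{\lceil T/\DT\rceil}$, Banach fixed point, commutation to pass invariance from $Q$ to $P_{\DT}$) is exactly the content of that theorem in the compact case. The one step the paper leaves entirely implicit, and which you rightly isolate as the crux, is the $\DT$-uniformity of $C$ and $\kappa$: your observation that $N\DT \in [T, T+\DT^*]$ converts the per-macro-step factor $(1-\eta)^k$ into $\mathrm{e}^{-\kappa m \DT}$ with $\kappa = |\ln(1-\eta)|/(T+\DT^*)$ is precisely the bookkeeping needed to justify the claim that the constants are independent of the time step.
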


\subsection{The discrete time Poisson equation}
We present here some useful results on the Poisson equation associated with the discrete chain $X_n$:
\begin{equation}\label{eqn:discrete-Poisson-equation}
  \left[\frac{I - P_{\DT}}{\DT}\right] \widehat{\theta}_{\DT} = \theta - \mu_{\DT}(\theta).
\end{equation}
We first show that the solution $\widehat{\theta}_{\DT}$ is well defined. To this end, we introduce the Banach space $B_{\DT}^{\infty}$ of bounded measurable functions with average~0 with respect to~$\mu_\DT$. A direct consequence of Theorem~\ref{thm:ergodicity} is that, for any $m \in \mathbb{N}_+$,
\[
\|P_{\DT}^m \|_{\mathcal{B}(B_{\DT}^{\infty})} \leq C \mathrm{e}^{-\kappa m \DT},
\]
where $\| \cdot \|_{\mathcal{B}(B_{\DT}^{\infty})}$ is the operator norm on $B_{\DT}^{\infty}$. This estimates immediately implies that
\[
(I - P_{\DT})^{-1}
=
\sum_{m = 0}^{\infty} P_{\DT}^m
\]
is a convergent series and the inverse is well defined on $\mathcal{B}(B_{\DT}^{\infty})$, with 
\begin{equation}
  \label{eq:bound_inverse_Ph}
\left\|(I - P_{\DT})^{-1}\right\|_{\mathcal{B}(B_{\DT}^{\infty})}
\leq 
\sum_{m = 0}^{\infty} \|P_{\DT}^m\|_{\mathcal{B}(B_{\DT}^{\infty})} \leq \frac{C}{1-\mathrm{e}^{-\kappa \DT}}
\end{equation}
for $\DT \in (0, \DT^*]$ (with $\DT^*$ as defined in Theorem~\ref{thm:ergodicity}). Therefore, the solution to the discrete Poisson equation~\eqref{eqn:discrete-Poisson-equation} exists and is unique, with the following bound for any $\DT \in (0, \DT^*]$:
\begin{equation}\label{eqn:discrete-Poisson-solution-bounds}
\left\|\widehat{\theta}_{\DT} \right\|_{B^{\infty}} 
\leq 
\DT \left\|(I - P_{\DT})^{-1}\right\|_{\mathcal{B}(B_{\DT}^{\infty})}\|\theta - \mu_{\DT}(\theta)\|_{B_{\DT}^{\infty}}  \leq \frac{C \DT}{1-\mathrm{e}^{-\kappa \DT}}\|\theta - \mu_{\DT}(\theta)\|_{B_{\DT}^{\infty}}.
\end{equation}
Note in particular that the last term in the above series of inequalities is uniformly bounded for $\DT \in (0,\DT^*]$.

The above estimates provide a control of the discrete Poisson solution. However, this is not sufficient to justify the consistency of the CLR methods since this requires a control of the derivatives of $\widehat{\theta}_{\DT}$ as well. Unfortunately, the regularity of~$\widehat{\theta}_{\DT}$ is not so easy to obtain directly from the Poisson equation. It may even not hold for evolution operators which are not fully regularizing, as is the case for Metropolis-type evolutions. We can nevertheless overcome this difficulty by using the following technical result whose proof is postponed to Section~\ref{sec:approx-inverse-operator}. It shows that the solution to the discrete Poisson equation can be approximated at arbitrary order in powers of~$\DT$ by a smooth function.

\begin{theorem}\label{thm:approx-inverse}
Suppose that $P_{\DT}$ admits the following expansion in powers of~$\DT$: there exists $p \geq 1$ such that
\begin{equation}\label{eqn:semigroup-expansion}
P_{\DT} = I + \DT\mathcal{A}_1 + \ldots + \DT^{p+1}\mathcal{A}_{p+1} + \DT^{p+2} \mathcal{R}_{p, \DT},
\end{equation}
where $\mathcal{A}_1,\dots,\mathcal{A}_{p+1}$ are differential operators of finite order with smooth coefficients, and $\mathcal{R}_{p, \DT}$ is uniformly bounded for $h$~bounded in the following sense: For any $k \geq 1$, there exists $\ell_k \geq 1$, $R_k \in \mathbb{R}_+$ and $\DT_{k}^*$ such that 
\[
\forall \DT \in (0,\DT_{k}^*], \quad \forall \varphi \in \mathcal{S}, \qquad \left\| \mathcal{R}_{p, \DT}\varphi \right\|_{C^k} \leq R_k \|\varphi\|_{C^{\ell_k}}.
\]
Assume also that $\mathcal{A}_1^{-1}$ sends $\mathcal{S}_0$ to $\mathcal{S}_0$ in the following sense: For any $k \geq 1$, there exists $m_k \geq 1$, $K_k \in \mathbb{R}_+$ and $\DT_{k}^*$ such that  
\[
\forall \DT \in (0,\DT_{k}^*], \quad \forall \varphi \in \mathcal{S}_0, \qquad \left\| \mathcal{A}_1^{-1}\varphi \right\|_{C^k} \leq K_k \|\varphi\|_{C^{m_k}}.
\]
Then, for any $h>0$, there exists a function $\widetilde{\theta}_{\DT} \in \mathcal{S}_0$ which approximates the solution $\widehat{\theta}_{\DT}$ to the discrete Poisson in the following sense: 
\begin{equation}\label{eqn:approx-inverse-equality}
\Pi\left[\frac{I - P_{\DT}}{\DT}\right]\Pi \left(\widetilde{\theta}_{\DT} - \widehat{\theta}_{\DT}\right) = \DT^{p+1} \phi_{\DT, p, \theta},
\end{equation}
where $\phi_{\DT, p, \theta}\in  \mathcal{S}_0$ is uniformly bounded in the following sense: For any $r \in \mathbb{N}$, there exists $M_r \in \mathbb{R}_+$ and $\DT_r > 0$ such that
\begin{equation}
\label{eq:unif_bound_remainder_approx_inverse}
    \forall \DT \in (0,\DT_r], \qquad \|\phi_{\DT, p, \theta}\|_{C^r} \leq M_r.
\end{equation}
Moreover, when $\mathcal{A}_1 = \mathcal{L}$, there exists $C_\theta \in \mathbb{R}_+$ and $\DT^* > 0$ such that, for any $\DT \in (0,\DT^*]$,
\begin{equation}\label{eqn:diff-discrete-continuous-Poisson}
\left\|\widetilde{\theta}_{\DT} - \widehat{\theta}\right\|_{B^\infty} + \left\|\nabla \widetilde{\theta}_{\DT} - \nabla \widehat{\theta}\right\|_{B^\infty} \leq C_\theta \DT.
\end{equation}
\end{theorem}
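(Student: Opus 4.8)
The plan is to build $\widetilde{\theta}_{\DT}$ as a truncated asymptotic expansion in powers of $\DT$ that solves the projected Poisson equation up to an $\mathcal{O}(\DT^{p+1})$ residual — a backward-error-analysis type of construction. Throughout I write $\mathcal{B}_{\DT} = (I - P_{\DT})/\DT$, so the discrete Poisson equation \eqref{eqn:discrete-Poisson-equation} reads $\mathcal{B}_{\DT}\widehat{\theta}_{\DT} = \theta - \mu_{\DT}(\theta)$. I would first record the identity $\Pi \mathcal{B}_{\DT}\Pi \widehat{\theta}_{\DT} = \Pi\theta$: since $P_{\DT}\mathbf{1} = \mathbf{1}$ we have $\mathcal{B}_{\DT}\mathbf{1} = 0$, so the inner $\Pi$ may be dropped when $\mathcal{B}_{\DT}$ acts on $\widehat{\theta}_{\DT}$, while the outer $\Pi$ removes the constant $\mu_{\DT}(\theta)$. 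It is essential here to keep $\mathcal{B}_{\DT}$ as the genuine operator $(I-P_{\DT})/\DT$ acting on $\widehat{\theta}_{\DT}\in B^\infty$, because $\widehat{\theta}_{\DT}$ need not be smooth; the differential expansion \eqref{eqn:semigroup-expansion} will only be used on the smooth ansatz below.

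Next I would make the ansatz $\widetilde{\theta}_{\DT} = \sum_{j=0}^{p} \DT^{j}\theta_{j}$ with each $\theta_{j} \in \mathcal{S}_0$ to be determined, and insert it into the expansion $\mathcal{B}_{\DT} = -(\mathcal{A}_1 + \DT\mathcal{A}_2 + \dots + \DT^{p}\mathcal{A}_{p+1} + \DT^{p+1}\mathcal{R}_{p,\DT})$. Since $\widetilde{\theta}_{\DT}\in\mathcal{S}_0$ we have $\Pi\widetilde{\theta}_{\DT}=\widetilde{\theta}_{\DT}$, and matching the coefficient of $\DT^{m}$ in $\Pi\mathcal{B}_{\DT}\widetilde{\theta}_{\DT}$ against $\Pi\theta$ for $m=0$ and against $0$ for $1\le m\le p$ yields the triangular hierarchy
\[
\Pi\mathcal{A}_1\theta_0 = -\Pi\theta, \qquad \Pi\mathcal{A}_1\theta_m = -\Pi\sum_{k=2}^{m+1}\mathcal{A}_k\theta_{m+1-k} \quad (1\le m\le p).
\]
Each right-hand side lies in $\mathcal{S}_0$ because of the outer $\Pi$, so I can solve recursively by setting $\theta_0 = -\mathcal{A}_1^{-1}\Pi\theta$ and $\theta_m = -\mathcal{A}_1^{-1}\big(\Pi\sum_{k=2}^{m+1}\mathcal{A}_k\theta_{m+1-k}\big)$, the hypothesis on $\mathcal{A}_1^{-1}$ guaranteeing $\theta_m\in\mathcal{S}_0$ with $C^k$-norms controlled by a finite number of derivatives of $\theta$. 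In particular the $\theta_j$ are fixed smooth functions independent of $\DT$.

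I would then collect the residual. By construction the powers $\DT^0,\dots,\DT^p$ cancel in $\Pi\mathcal{B}_{\DT}\widetilde{\theta}_{\DT}-\Pi\theta$, leaving exactly
\[
\Pi\mathcal{B}_{\DT}\widetilde{\theta}_{\DT} - \Pi\theta = \DT^{p+1}\phi_{\DT,p,\theta}, \qquad \phi_{\DT,p,\theta} = -\Pi\Big(\sum_{\substack{1\le k\le p+1,\,0\le j\le p\\ k-1+j\ge p+1}} \DT^{k-1+j-(p+1)}\mathcal{A}_k\theta_j + \mathcal{R}_{p,\DT}\widetilde{\theta}_{\DT}\Big).
\]
This $\phi_{\DT,p,\theta}$ lies in $\mathcal{S}_0$ by the outer $\Pi$, and its uniform $C^r$ bounds \eqref{eq:unif_bound_remainder_approx_inverse} follow from the $\DT$-independent smoothness of the $\theta_j$ (so $\|\widetilde{\theta}_{\DT}\|_{C^{\ell_r}}$ is bounded for $\DT\le 1$) together with the uniform bound on $\mathcal{R}_{p,\DT}$, the remaining polynomial terms carrying nonnegative powers of $\DT$. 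Subtracting $\Pi\mathcal{B}_{\DT}\Pi\widehat{\theta}_{\DT}=\Pi\theta$ from $\Pi\mathcal{B}_{\DT}\Pi\widetilde{\theta}_{\DT}=\Pi\theta+\DT^{p+1}\phi_{\DT,p,\theta}$ yields \eqref{eqn:approx-inverse-equality}.

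Finally, when $\mathcal{A}_1=\mathcal{L}$ the order-zero relation $\Pi\mathcal{L}\theta_0=-\Pi\theta$ together with $\theta_0\in\mathcal{S}_0$ is, after checking that the residual constant vanishes by invariance of $\mu$, precisely the continuous Poisson equation \eqref{eqn:continuous-Poisson-equation}; by uniqueness of its $\mathcal{S}_0$ solution, $\theta_0=\widehat{\theta}$. Hence $\widetilde{\theta}_{\DT}-\widehat{\theta}=\sum_{j=1}^{p}\DT^{j}\theta_{j}=\mathcal{O}(\DT)$ in both $B^\infty$ and $C^1$, which gives \eqref{eqn:diff-discrete-continuous-Poisson}. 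I expect the main obstacle to be the bookkeeping that keeps every intermediate object in $\mathcal{S}_0$ and all estimates uniform in $\DT$, and especially the clean separation between treating $\mathcal{B}_{\DT}$ as a genuine operator on the possibly non-smooth $\widehat{\theta}_{\DT}$ and as its differential expansion on the smooth ansatz $\widetilde{\theta}_{\DT}$.
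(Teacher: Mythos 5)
Your proposal is correct and follows essentially the same route as the paper: the paper builds the approximate inverse operator $Q_{\DT}$ by truncating the Neumann series $\widetilde{\mathcal{A}}_1^{-1}\sum_n(-\DT)^n(\mathcal{B}\widetilde{\mathcal{A}}_1^{-1})^n$ and sets $\widetilde{\theta}_{\DT}=-Q_{\DT}(\theta-\mu(\theta))$, which, expanded in powers of $\DT$, gives exactly the coefficients $\theta_j$ of your triangular hierarchy. Your order-by-order ansatz, the identification $\theta_0=\widehat{\theta}$ when $\mathcal{A}_1=\mathcal{L}$, and the resulting $\mathcal{O}(\DT)$ bound all match the paper's argument.
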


Note that an immediate consequence of~\eqref{eqn:approx-inverse-equality} is that 
\begin{equation}\label{eqn:approx-inverse-equality-equivalent}
\widetilde{\theta}_{\DT} - \widehat{\theta}_{\DT} = h^{p+1}\left[\frac{I - P_{\DT}}{\DT}\right]^{-1}(\phi_{\DT, p, \theta} - \mu_{\DT}(\phi_{\DT, p, \theta})), 
\end{equation}
which allows to prove that $\left\|\widetilde{\theta}_{\DT} - \widehat{\theta}_{\DT}\right\|_{B^\infty} \leq C h^{p+1}$. Indeed, \eqref{eqn:approx-inverse-equality} is equivalent to
\[
\left[\frac{I - P_{\DT}}{\DT}\right](\widetilde{\theta}_{\DT} - \widehat{\theta}_{\DT}) = C_{\DT} + h^{p+1}\phi_{\DT, p, \theta},
\]
where the constant $C_h$ equals $\mu\left(\DT^{-1}[I - P_{\DT}](\widetilde{\theta}_{\DT} - \widehat{\theta}_{\DT})\right)$.
Since the left-hand side of the above equation is of zero mean with respect to $\mu_{\DT}$, it holds $C_{\DT} = -h^{p+1}\mu_{\DT}(\phi_{\DT, p, \theta})$. Finally, \eqref{eqn:approx-inverse-equality-equivalent} follows from~\eqref{eq:bound_inverse_Ph} since the inverse of~$\DT^{-1}(I - P_{\DT})$ can be applied to the function $\phi_{\DT, p, \theta} - \mu_{\DT}(\phi_{\DT, p, \theta})$.

\begin{remark}
Let us comment on the fact we have defined so far solutions to three Poisson equations: the solution $\widehat{\theta}$ to the Poisson equation~\eqref{eqn:continuous-Poisson-equation} associated with the continuous dynamics, the solution $\widehat{\theta}_{\DT}$ to the Poisson equation~\eqref{eqn:discrete-Poisson-equation} associated with the discrete dynamics, and the approximation $\widetilde{\theta}_{\DT}$ of~$\widehat{\theta}_{\DT}$ defined in $\eqref{eqn:approx-inverse-equality}$. Note that the actual Monte Carlo estimators in this paper do not involve $\widetilde{\theta}_{\DT}$, which are defined solely for the purpose of the mathematical analysis.  
\end{remark}


\section{Linear response estimation based on the  CLR scheme}\label{sec:CLR-scheme}

We are now in a position to present numerical schemes for linear response estimation based on the CLR method. 

\subsection{Weak first order CLR scheme}\label{sec:first-order-scheme}

We present in this section the weak first order CLR scheme that we propose for estimating the linear response index $\rho(\theta)$. Recall the continuous time CLR estimator defined in \eqref{eqn:continuous-CLR-estimator}. The weak first order CLR estimator we propose is 
\begin{equation}\label{eqn:first-order-CLR-estimator}
    \mathcal{M}_{\DT, N}^{[1]}(\theta) = \frac{1}{N}\sum_{n=0}^{N-1} (\theta_n - \mu_{\DT}(\theta)) Z_N,
\end{equation}
where 
\begin{equation}\label{eqn:discrete-Z}
Z_N = \sum_{n=0}^{N-1} (\sigma_n^{-1}F_n)^{\TR} \Delta W_n.
\end{equation}
Note that $\mathcal{M}_{\DT, N}^{[1]}(\theta)$ is simply a discrete approximation to \eqref{eqn:continuous-CLR-estimator}. It is important to note that we do not require the discrete time process $Z_n$ to be the likelihood ratio process associated with the discrete chain $X_n$. Instead, $Z_n$ is simply a discretization of the continuous time likelihood ratio process $Z(t)$. 

\begin{algorithm}
\caption{Pseudo-code of the first order CLR algorithm}
\label{alg:first-order-CLR}
\begin{algorithmic}[1]
\STATE{Choose integration time $T$, time step $\DT$, number of realizations~$s$}
\STATE{Define number of steps $N = \left \lfloor{T / \DT}\right \rfloor $}
\FOR{$i = 1 : s$}
\STATE{Initialize the starting state $X_0^{(i)} \sim \MUINI, Z_0^{(i)} = 0$ and running average $\alpha_0^{(i)} =0$}
\FOR{$n = 1 : N$}
\STATE{Update $\alpha_{n+1}^{(i)} = \alpha_n^{(i)} + N^{-1}\theta(X_n^{(i)})$}
\STATE{Generate random numbers $\Delta W_n^{(i)} \sim N(0, \DT \mathrm{Id}_d)$}
\STATE{Update $X_{n+1}^{(i)} = X_n^{(i)} + \Phi_{\DT}(X_n^{(i)}, \Delta W_n^{(i)})$}
\STATE{Update $Z_{n+1}^{(i)} = Z_n^{(i)} +  \sigma(X_n^{(i)})^{-1}F(X_n^{(i)}) \Delta W_n^{(i)} $}
\STATE{Increment $n$ as $n + 1$}
\ENDFOR
\ENDFOR
\STATE{Compute the empirical average $\displaystyle \bar{\alpha}_N = s^{-1} \sum_{i=1}^s \alpha_N^{(i)}$}
\RETURN{$\displaystyle s^{-1} \sum_{i = 1}^s \left(\alpha_N^{(i)} - \bar{\alpha}_N\right)Z_N^{(i)}$}
\end{algorithmic}
\end{algorithm}

The pseudo-code of the first order CLR algorithm is presented in Algorithm~ \ref{alg:first-order-CLR}. 
The CLR estimator is an ensemble average estimator based on multiple trajectories rather than an ergodic average estimator based on a single long trajectory. 

\begin{remark}
  Other centerings could be considered, in particular by finding the value $\alpha^*_{N,s}$ which minimizes the empirical variance of $s^{-1} \sum_{i = 1}^s (\alpha_N^{(i)} - a)Z_N^{(i)}$ with respect to~$a$. A simple computation shows that
  \[
   \alpha^*_{N,s} = \frac{\mathrm{Cov}_s(\alpha_N Z_N,Z_N)}{\mathrm{Cov}_s(Z_N,Z_N)}, 
 \]
 where
 \[
   \mathrm{Cov}_s(X,Y) = \frac1s \sum_{i=1}^s X^{(i)} Y^{(i)} - \left(\frac1s \sum_{i=1}^s X^{(i)}\right)\left(\frac1s \sum_{i=1}^s Y^{(i)}\right).
 \]
 Of course, $\alpha^*_{N,s}$ converges to $\mu_\DT(\theta)$ as $N,s \to +\infty$. Our numerical experience however shows that there is not much benefit from centering by $\alpha^*_{N,s}$ rather than by the empirical average~$\bar{\alpha}_N$, so that we therefore stick to the latter one for simplicity.
\end{remark}

\subsubsection{Consistency of the first order CLR scheme}

The following result shows that the estimator~\eqref{eqn:first-order-CLR-estimator} is consistent in the limits $\DT \to 0$ and $T = N\DT \to +\infty$.

\begin{theorem}\label{thm:first-order-error}
  Fix an observable~$\theta \in \mathcal{S}$ and consider the weak first order scheme~\eqref{eqn:first-order-scheme}. There exist $\DT^* > 0$ and $C \in \mathbb{R}_+$ such that, for any $\DT \in (0, \DT^*]$,
\begin{equation}\label{eqn:first-order-error}
   \left|\mathbb{E}_{\DT}\left\{\mathcal{M}_{\DT, N}^{[1]}(\theta)\right\} - \rho(\theta)\right| \leq C \left(\DT + \frac{1}{\sqrt{N\DT}}\right).
\end{equation}
\end{theorem}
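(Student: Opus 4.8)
The plan is to mirror the continuous-time argument of Theorem~\ref{thm:LR-unbiasedness}, replacing the continuous Poisson equation by its discrete counterpart~\eqref{eqn:discrete-Poisson-equation} and the It\^o decomposition by a telescoping-plus-martingale splitting of the chain. Writing $U_n = \sigma_n^{-1}F_n$, so that $Z_N = \sum_{n=0}^{N-1} U_n \Delta W_n$ in the scalar setting, I would first use the discrete Poisson equation to rewrite the centered observable as $\theta_n - \mu_{\DT}(\theta) = \DT^{-1}\big(\widehat{\theta}_{\DT}(X_n) - (P_{\DT}\widehat{\theta}_{\DT})(X_n)\big)$. Splitting
\[
\widehat{\theta}_{\DT}(X_n) - \mathbb{E}_{\DT}\{\widehat{\theta}_{\DT}(X_{n+1})\mid \mathcal{F}_n\} = \big[\widehat{\theta}_{\DT}(X_n) - \widehat{\theta}_{\DT}(X_{n+1})\big] + M_{n+1},
\]
where $M_{n+1} = \widehat{\theta}_{\DT}(X_{n+1}) - \mathbb{E}_{\DT}\{\widehat{\theta}_{\DT}(X_{n+1})\mid \mathcal{F}_n\}$ is a martingale increment and the first bracket telescopes, and then multiplying by $Z_N$ and taking expectations yields
\[
\mathbb{E}_{\DT}\big\{\mathcal{M}_{\DT,N}^{[1]}(\theta)\big\} = \frac{1}{N\DT}\mathbb{E}_{\DT}\big\{[\widehat{\theta}_{\DT}(X_0)-\widehat{\theta}_{\DT}(X_N)]Z_N\big\} + \frac{1}{N\DT}\sum_{n=0}^{N-1}\mathbb{E}_{\DT}\{M_{n+1}Z_N\}.
\]
The boundary term is handled exactly as in the continuous case: the sup-norm bound~\eqref{eqn:discrete-Poisson-solution-bounds} on $\widehat{\theta}_{\DT}$ together with $\mathbb{E}_{\DT}\{|Z_N|\}\leq \mathbb{E}_{\DT}\{Z_N^2\}^{1/2} = \big(\sum_n \mathbb{E}_{\DT}\{U_n^2\}\DT\big)^{1/2} \leq C\sqrt{N\DT}$ (using orthogonality of the increments $U_n\Delta W_n$) bound it by $C/\sqrt{N\DT}$, producing the statistical part of the estimate. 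Crucially, this step uses only the uniform bound on $\|\widehat{\theta}_{\DT}\|_{B^\infty}$ and no regularity.

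For the martingale sum I would exploit orthogonality of the increments. Since $M_{n+1}$ has zero mean conditionally on $\mathcal{F}_n$ and, for $m\neq n$, the factor $\Delta W_m$ is independent of the remaining $\mathcal{F}_{\max(m,n+1)}$-measurable quantities and has zero mean, all cross terms vanish and $\mathbb{E}_{\DT}\{M_{n+1}Z_N\} = \mathbb{E}_{\DT}\{M_{n+1}U_n\Delta W_n\} = \mathbb{E}_{\DT}\{\widehat{\theta}_{\DT}(X_{n+1})U_n\Delta W_n\}$, the last equality because the conditional-expectation part of $M_{n+1}$ is $\mathcal{F}_n$-measurable and annihilates $\Delta W_n$ in expectation. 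Evaluating the surviving quantity requires a Taylor expansion of $\widehat{\theta}_{\DT}$ in the Euler increment $\Phi_{\DT,n}=b_n\DT+\sigma_n\Delta W_n$, which the discrete Poisson solution is not known to admit.

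This is where the main obstacle lies, and the reason Theorem~\ref{thm:approx-inverse} is needed: I would replace $\widehat{\theta}_{\DT}$ by its smooth surrogate $\widetilde{\theta}_{\DT}\in\mathcal{S}_0$, whose derivatives are uniformly bounded in $\DT$ by construction, at the cost of an error governed by $\|\widehat{\theta}_{\DT}-\widetilde{\theta}_{\DT}\|_{B^\infty}\leq C\DT^{2}$. A Cauchy--Schwarz estimate against $U_n\Delta W_n$ (of $L^2$-size $\sqrt{\DT}$) shows this swap contributes only $O(\DT^{3/2})$ after summation and rescaling by $(N\DT)^{-1}$. With the now-smooth $\widetilde{\theta}_{\DT}$, a third-order Taylor expansion combined with the moment identities $\mathbb{E}\{\Delta W_n\}=0$, $\mathbb{E}\{(\Delta W_n)^2\}=\DT$, $\mathbb{E}\{(\Delta W_n)^3\}=0$ and the relation $U_n\sigma_n=F_n$ gives $\mathbb{E}_{\DT}\{\widetilde{\theta}_{\DT}(X_{n+1})U_n\Delta W_n\mid \mathcal{F}_n\} = F_n\,\widetilde{\theta}_{\DT}'(X_n)\,\DT + O(\DT^2)$, uniformly in $\DT$. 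Summing and rescaling reduces the martingale sum to $\frac{1}{N}\sum_{n}\mathbb{E}_{\DT}\{F_n\widetilde{\theta}_{\DT}'(X_n)\}+O(\DT)$.

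Finally I would close the argument with the time-averaging consistency Proposition~\ref{thm:time-averaging-consistency} applied to $\varphi = F\widetilde{\theta}_{\DT}'$: since the constant there is independent of $\varphi$ and $\|F\widetilde{\theta}_{\DT}'\|_{C^2}$ is bounded uniformly in $\DT$, this gives $\frac{1}{N}\sum_n \mathbb{E}_{\DT}\{F_n\widetilde{\theta}_{\DT}'(X_n)\} = \mu(F\widetilde{\theta}_{\DT}') + O\big(\DT + (N\DT)^{-1}\big)$. The gradient estimate~\eqref{eqn:diff-discrete-continuous-Poisson} then replaces $\widetilde{\theta}_{\DT}'$ by $\widehat{\theta}'$ up to $O(\DT)$, and Proposition~\ref{thm:alternative-linear-response-formula} identifies $\mu(F\widehat{\theta}') = \rho(\theta)$. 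Collecting the boundary term $O(1/\sqrt{N\DT})$, the surrogate-swap error $O(\DT^{3/2})$, the Taylor remainder $O(\DT)$, and the consistency error $O(\DT+(N\DT)^{-1})$, and absorbing $(N\DT)^{-1}\leq (N\DT)^{-1/2}$ for $N\DT\geq 1$, yields the claimed bound $C\big(\DT + 1/\sqrt{N\DT}\big)$. The only genuinely delicate point is the regularity gap for $\widehat{\theta}_{\DT}$, which forces the detour through $\widetilde{\theta}_{\DT}$ and the careful bookkeeping between the boundary term (which tolerates non-smoothness) and the martingale term (which does not).
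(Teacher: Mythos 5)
Your proposal is correct and follows essentially the same route as the paper's proof: rewrite the centered observable via the discrete Poisson equation~\eqref{eqn:discrete-Poisson-equation}, split into a telescoping boundary term (bounded by $C/\sqrt{N\DT}$ using only the sup-norm bound~\eqref{eqn:discrete-Poisson-solution-bounds} and $\mathbb{E}_{\DT}\{Z_N^2\}\leq CN\DT$) plus martingale increments, reduce $\mathbb{E}_{\DT}\{M_{n+1}Z_N\}$ to the diagonal term by conditioning, pass to the smooth surrogate $\widetilde{\theta}_{\DT}$ of Theorem~\ref{thm:approx-inverse}, Taylor-expand to extract $F_n\widetilde{\theta}_{\DT}'(X_n)\DT + \mathcal{O}(\DT^2)$, and close with Proposition~\ref{thm:time-averaging-consistency} applied to $F\widetilde{\theta}_{\DT}'$, the gradient estimate~\eqref{eqn:diff-discrete-continuous-Poisson} and Proposition~\ref{thm:alternative-linear-response-formula}. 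The only (harmless) variation is that you perform the $\widehat{\theta}_{\DT}\to\widetilde{\theta}_{\DT}$ swap locally, in the surviving martingale term only, via the bound $\|\widehat{\theta}_{\DT}-\widetilde{\theta}_{\DT}\|_{B^\infty}\leq C\DT^2$ and Cauchy--Schwarz against $U_n\Delta W_n$, whereas the paper swaps globally up front through the projected identity $\Pi(I-P_{\DT})\Pi$ and controls the residual $\DT^2 N^{-1}\sum_n\mathbb{E}_{\DT}\{\phi_{\DT,1,\theta}(X_n)Z_N\}$ with Lemma~\ref{lemma:elementary-coarse-estimate}; both yield the same $\mathcal{O}(\DT^{3/2})$ contribution.
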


Note that the bias has two origins: one part is related to the time step~$\DT$, and is proportional to~$\DT$, as expected for a scheme of weak order~1; the second part of the bias arises from the fact that the integration time~$T$ is finite, and scales as $T^{-1/2}$. The fact that the latter error is larger than the $1/T$ error for standard time averages (as studied in~\cite{mattingly2010convergence}) is due to the martingale~$Z_N$.

\begin{proof}
We rewrite the CLR estimator using the discrete Poisson equation \eqref{eqn:discrete-Poisson-equation}:
\[
\frac{1}{N}\sum_{n = 0}^{N-1}\mathbb{E}_{\DT}\left\{(\theta_n - \mu_{\DT}(\theta)) Z_N\right\} = \frac{1}{N\DT}\sum_{n = 0}^{N-1}\mathbb{E}_{\DT}\left\{(I - P_{\DT}) \widehat{\theta}_{\DT, n} Z_N\right\},
\]
where $P_\DT \widehat{\theta}_{\DT, n}$ stands for $(P_\DT \widehat{\theta}_{\DT})(X_n)$. Since $Z_N$ is of mean zero, it can be readily verified that the right-hand side of the above equation equals
\[
\frac{1}{N\DT}\sum_{n = 0}^{N-1}\mathbb{E}_{\DT}\left\{ \left[\Pi(I - P_{\DT})\Pi \widehat{\theta}_{\DT, n}\right] Z_N\right\}.
\]
The motivation for introducing the projection operator $\Pi$ into the estimator is to replace $\widehat{\theta}_{\DT}$ by its approximation $\widetilde{\theta}_{\DT}$ (see \eqref{eqn:approx-inverse-equality}) whose derivatives can be controlled. For the Euler-Maruyama scheme~\eqref{eqn:first-order-scheme}, it can be shown that the evolution semigroup admits the expansion
\begin{equation}\label{eqn:first-order-semigroup}
P_{\DT} \varphi = \varphi + \mathcal{L} \varphi \, \DT + \mathcal{A}_2 \varphi \, \DT^2 + \mathcal{R}_{1, \DT}\varphi \, \DT^3,
\end{equation}
where $\mathcal{A}_2, \mathcal{R}_{1, \DT}$ satisfy the assumptions of Theorem~\ref{thm:approx-inverse}; see for instance~\cite[Section~3.3.2]{lelievre2016partial} for explicit expressions, although the action of the operators $\mathcal{A}_2$ and $\mathcal{R}_{1, \DT}$ on the right-hand side of~\eqref{eqn:first-order-semigroup} need not be made precise. By choosing $p = 1$ in Theorem~\ref{thm:approx-inverse}, 
\[
\begin{aligned}
\frac{1}{N\DT}\sum_{n = 0}^{N-1}\mathbb{E}_{\DT}\left\{\left[\Pi(I - P_{\DT})\Pi \widehat{\theta}_{\DT, n}\right] Z_N\right\}
& =
    \frac{1}{N\DT}\sum_{n = 0}^{N-1}\mathbb{E}_{\DT}\left\{ \left[(I - P_{\DT}) \widetilde{\theta}_{\DT, n} \right]Z_N\right\}\\
    & \ \ - 
    \frac{\DT^2}{N}\sum_{n = 0}^{N-1}\mathbb{E}_{\DT}\left\{ \phi_{\DT, 1, \theta}(X_n) Z_N \right\},
\end{aligned}
\]
where the remainder term $\phi_{\DT, 1, \theta}$ is uniformly bounded in~$B^\infty$ for $\DT \in (0,\DT^*]$. Therefore, by reorganizing the sum for the first term on the right hand side, 
\begin{equation}\label{eqn:first-order-estimates-2}
\begin{split}
\left|\frac{1}{N\DT}\sum_{n = 0}^{N-1}\mathbb{E}_{\DT}\left\{\Pi(I - P_{\DT})\Pi \widehat{\theta}_{\DT, n} Z_N\right\} - \rho(\theta)\right|
& \leq 
\left|\frac{1}{N \DT}\sum_{n = 0}^{N-1}\mathbb{E}_{\DT}\left\{\left(\widetilde{\theta}_{\DT, n+1} - P_{\DT}\widetilde{\theta}_{\DT, n}\right) Z_N\right\} - \rho(\theta)\right| \\
& \ \ + \left|\frac{1}{N \DT}\mathbb{E}_{\DT}\left\{\left(\widetilde{\theta}_{\DT, N} - \widetilde{\theta}_{\DT, 0} \right)Z_N\right\}\right|\\
&\ \ + \left| \frac{\DT^2}{N}\sum_{n = 0}^{N-1}\mathbb{E}_{\DT}\left\{ \phi_{\DT, 1, \theta}(X_n) Z_N \right\}\right|.
\end{split}
\end{equation}
Let us estimate the various terms on the right-hand side of~\eqref{eqn:first-order-estimates-2}. The last term of the right-hand side of~\eqref{eqn:first-order-estimates-2} can be bounded by Lemma~\ref{lemma:elementary-coarse-estimate} in Section~\ref{sec:estimate_elmentary} as 
\begin{equation}\label{eqn:first-order-estimate-RHS-3}
\left|\frac{\DT^2}{N}\sum_{n = 0}^{N-1}\mathbb{E}_{\DT}\left\{ \phi_{\DT, 1, \theta}(X_n) Z_N \right\}\right| \leq C\DT^{3/2}.
\end{equation} 
As for the second term of the right-hand side of \eqref{eqn:first-order-estimates-2}, a simple application of the Cauchy--Schwarz inequality gives (using that $\widetilde{\theta}_{\DT}$ is uniformly bounded by~\eqref{eqn:diff-discrete-continuous-Poisson}),
\begin{equation}\label{eqn:first-order-estimate-RHS-2}
\left|\frac{1}{N \DT}\mathbb{E}_{\DT}\left\{\left(\widetilde{\theta}_{\DT, N} - \widetilde{\theta}_{\DT, 0} \right)Z_N\right\}\right| \leq \frac{C}{Nh} \sqrt{\mathbb{E}_{\DT}(Z_N^2)}  \leq \frac{C}{\sqrt{N \DT}}.
\end{equation}
Hence, it remains to estimate the first term on the right hand side of \eqref{eqn:first-order-estimates-2}.
By a simple conditioning argument on the increments of the discrete martingale~$Z_N$, and noting that $\widetilde{\theta}_{\DT, n+1} - P_{\DT}\widetilde{\theta}_{\DT, n}$ are discrete martingale increments,
\[
\frac{1}{N \DT}\sum_{n = 0}^{N-1}\mathbb{E}_{\DT}\left\{\left(\widetilde{\theta}_{\DT, n+1} - P_{\DT}\widetilde{\theta}_{\DT, n}\right) Z_N\right\}
=
\frac{1}{N \DT}\sum_{n = 0}^{N-1}\mathbb{E}_{\DT}\left\{\left(\widetilde{\theta}_{\DT, n+1} - P_{\DT}\widetilde{\theta}_{\DT, n}\right)(Z_{n+1} - Z_n)\right\}.
\]
We next expand $\widetilde{\theta}_{\DT, n+1}$ in terms of the increment $\Phi_{\DT, n}$, i.e., 
\begin{equation*}
\begin{split}
\widetilde{\theta}_{\DT, n+1} = 
\widetilde{\theta}_{\DT, n}
&+ \widetilde{\theta}_{\DT, n}^{\prime}  \Phi_{\DT, n}
+ \frac{1}{2}\widetilde{\theta}_{\DT, n}^{(2)}  \Phi_{\DT, n}^{2} 
+ \frac{1}{6}\widetilde{\theta}_{\DT, n}^{(3)}  \Phi_{\DT, n}^{3} + r_{\DT, \theta, n},
\end{split}
\end{equation*} 
where the remainder term reads
\[
r_{\DT, \theta, n}=
\left(\frac{1}{6}\int_0^1 u^3  \widetilde{\theta}_{\DT}^{(4)}(X_n + u \Phi_{\DT, n})  \,du\right)\Phi_{\DT, n}^{4}.
\]
Plugging in $\Phi_{\DT,n} = b_n \DT + \sigma_n \Delta W_n$ for the Euler-Maruyama scheme~\eqref{eqn:first-order-scheme} and rearranging terms leads to
\begin{equation}\label{eqn:first-order-Taylor-expansion}
\begin{split}
\widetilde{\theta}_{\DT, n+1} 
= \widetilde{\theta}_{\DT, n} 
+ \widetilde{\theta}_{\DT, n}^{\prime} \sigma_n \Delta W_n
+ \left\{\widetilde{\theta}_{\DT, n}^{\prime} b_n \DT +  \frac{1}{2}\widetilde{\theta}_{\DT, n}^{(2)} \sigma_n^2 \Delta W_n^2\right\}\\
+ \left\{ \widetilde{\theta}_{\DT, n}^{(2)} b_n \sigma_n \DT\Delta W_n
+ \frac{1}{6}\widetilde{\theta}_{\DT, n}^{(3)} \sigma_n^3 \Delta W_n^3  \right\}
+ \psi_{\DT, \theta, n},
\end{split}
\end{equation}
where the remainder term is of order~$h^2$:
\[
\psi_{\DT, \theta, n} = \frac{1}{2}\left(\widetilde{\theta}_{\DT, n}^{(2)} b_n^2 \DT^2 + \widetilde{\theta}_{\DT, n}^{(3)}b_n \sigma_n^2 \DT \Delta W_n^2\right)
+ \frac{1}{2}\widetilde{\theta}_{\DT, n}^{(3)}b_n^2 \sigma_n \DT^2 \Delta W_n 
+ \frac{1}{6}\widetilde{\theta}_{\DT, n}^{(3)}b_n^3\DT^3 + r_{\DT, 
\theta, n}.
\]
Gathering the expansions \eqref{eqn:first-order-semigroup} and \eqref{eqn:first-order-Taylor-expansion}, a simple calculation shows that
\begin{equation}
  \label{eq:expansion_discrete_martingale_increment}
  \begin{aligned}
  \widetilde{\theta}_{\DT, n+1} - P_{\DT}\widetilde{\theta}_{\DT, n} = \widetilde{\theta}_{\DT, n}^{\prime} \sigma_n \Delta W_n
& + \left\{\widetilde{\theta}_{\DT, n}^{\prime} b_n \DT +  \frac{1}{2}\widetilde{\theta}_{\DT, n}^{(2)} \sigma_n^2 \Delta W_n^2 - h \mathcal{L}\widetilde{\theta}_{\DT, n}\right\}\\
& + \left\{ \widetilde{\theta}_{\DT, n}^{(2)} b_n \sigma_n \DT\Delta W_n + \frac{1}{6}\widetilde{\theta}_{\DT, n}^{(3)} \sigma_n^3 \Delta W_n^3  \right\}
+ \widetilde{\psi}_{\DT, \theta, n},
\end{aligned}
\end{equation}
where the remainder term $\widetilde{\psi}_{\DT, \theta, n}$ is of order~$h^2$, so that, recalling $Z_{n+1}-Z_n = \sigma_n^{-1} F_n \Delta W_n$,
\begin{equation*}
\begin{aligned}
& \frac{1}{\DT}\mathbb{E}_{\DT}\left\{\left(\widetilde{\theta}_{\DT, n+1} - P_{\DT}\widetilde{\theta}_{\DT, n}\right)(Z_{n+1} - Z_n) \right\}\\
& \qquad =
\mathbb{E}_{\DT}\left\{\widetilde{\theta}_{\DT, n}^{\prime} F_n \right\}
+ \mathbb{E}_{\DT}\left\{ \widetilde{\theta}_{\DT, n}^{(2)} b_n F_n
+ \frac{1}{2}\widetilde{\theta}_{\DT, n}^{(3)} \sigma_n^2 F_n  \right\}\DT
+ \widetilde{\Psi}_{\DT, \theta, n} \DT^2
\end{aligned}
\end{equation*}
for some $\widetilde{\Psi}_{\DT, \theta, n}$ uniformly bounded in~$B^\infty$ for $\DT$ sufficiently small.
Summing the above equality over $n$ and then bounding the $\DT$ and $\DT^2$ terms,
\[
\left|\frac{1}{N \DT}\sum_{n = 0}^{N-1}\mathbb{E}_{\DT}\left\{\left(\widetilde{\theta}_{\DT, n+1} - P_{\DT}\widetilde{\theta}_{\DT, n}\right)(Z_{n+1} - Z_n)\right\} - \rho(\theta)\right|
\leq
\left|\frac{1}{N}\sum_{n = 0}^{N-1}\mathbb{E}_{\DT}\left\{\widetilde{\theta}_{\DT, n}^{\prime} F_n \right\} -\rho(\theta)\right| + C\DT
\]
for some constant $C>0$. Since $\widetilde{\theta}_{\DT}^{\prime} F \in \mathcal{S}$ and $\|\widetilde{\theta}_{\DT}^{\prime} F\|_{B^{\infty}}$ is uniformly bounded for sufficiently small $\DT$ (see~\eqref{eqn:diff-discrete-continuous-Poisson}), Proposition~\ref{thm:time-averaging-consistency} shows that
\[
\left|\frac{1}{N}\sum_{n = 0}^{N-1}\mathbb{E}_{\DT}\left\{\widetilde{\theta}_{\DT, n}^{\prime} F_n \right\} - \mu\left(\widetilde{\theta}_{\DT}^{\prime} F\right)\right|
\leq 
C \left(\DT + \frac{1}{N\DT}\right)
\]
for some constant $C\in \mathbb{R}_+$ that is independent of $\DT$. Since $\left|\mu(\widetilde{\theta}_{\DT}^{\prime} F) - \mu(\widehat{\theta}^{\prime} F)\right| \leq C \DT$
by Theorem~\ref{thm:approx-inverse}, and $\mu(\widehat{\theta}^{\prime} F) = \rho(\theta)$ by Proposition~\ref{thm:alternative-linear-response-formula}, we immediately have 
\begin{equation}\label{eqn:first-order-estimate-RHS-1}
\left|\frac{1}{N \DT}\sum_{n = 0}^{N-1}\mathbb{E}_{\DT}\left\{\left(\widetilde{\theta}_{\DT, n+1} - P_{\DT}\widetilde{\theta}_{\DT, n}\right)Z_N\right\}
 -  \rho(\theta)\right| \leq C\left(\DT + \frac{1}{N\DT}\right).
\end{equation}
Finally, 
combining the estimates \eqref{eqn:first-order-estimate-RHS-3}, \eqref{eqn:first-order-estimate-RHS-2} and \eqref{eqn:first-order-estimate-RHS-1} 
leads to the result.
\end{proof}

\subsubsection{Variance analysis of the first order CLR scheme}

The following result shows that the variance of the estimator~\eqref{eqn:first-order-CLR-estimator} is bounded uniformly with respect to the integration time for sufficiently small time steps.

\begin{theorem}\label{thm:first-order-var}
  Fix an observable $\theta \in \mathcal{S}$ and consider the weak first order scheme~\eqref{eqn:first-order-scheme}. There exist $\DT^* > 0$ and $C_1,C_2 \in \mathbb{R}_+$ such that, for any $\DT \in (0, \DT^*]$,
\begin{equation}\label{first-order-var-estim}
  \mathrm{Var}_{\DT}\left\{\mathcal{M}_{\DT, N}^{[1]}(\theta)\right\} \leq C_1 + C_2\left(\DT + \frac{1}{T}\right).
\end{equation}
\end{theorem}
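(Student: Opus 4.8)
The plan is to bound the second moment
$\mathbb{E}_{\DT}\{\mathcal{M}_{\DT,N}^{[1]}(\theta)^2\}$, since the variance is controlled by it. Following the structure of the consistency proof, I would first rewrite the estimator using the discrete Poisson equation~\eqref{eqn:discrete-Poisson-equation} and the projection trick, replacing $\widehat{\theta}_{\DT}$ by its smooth approximation $\widetilde{\theta}_{\DT}$ from Theorem~\ref{thm:approx-inverse}. The key algebraic identity is the decomposition
\[
\frac{1}{N}\sum_{n=0}^{N-1}(\theta_n - \mu_\DT(\theta)) = \frac{1}{N\DT}\left(\widetilde{\theta}_{\DT,0} - \widetilde{\theta}_{\DT,N}\right) + \frac{1}{N\DT}\sum_{n=0}^{N-1}\left(\widetilde{\theta}_{\DT,n+1} - P_\DT\widetilde{\theta}_{\DT,n}\right) + (\text{remainder}),
\]
where the middle sum is a sum of discrete martingale increments and the remainder comes from the $\DT^2\phi_{\DT,1,\theta}$ correction in~\eqref{eqn:approx-inverse-equality}. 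Multiplying by $Z_N$ and using $(a+b+c)^2 \leq 3(a^2+b^2+c^2)$, I would treat each of the three contributions separately.

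The boundary term $\frac{1}{N\DT}(\widetilde{\theta}_{\DT,0}-\widetilde{\theta}_{\DT,N})Z_N$ is handled exactly as in Theorem~\ref{thm:first-order-var}'s predecessor: since $\widetilde{\theta}_{\DT}$ is uniformly bounded by~\eqref{eqn:diff-discrete-continuous-Poisson} and $\mathbb{E}_{\DT}\{Z_N^2\} = \sum_{n}\mathbb{E}_{\DT}\{(\sigma_n^{-1}F_n)^2\}\DT \leq C N\DT$, its second moment is bounded by $C/(N\DT)^2 \cdot N\DT = C/(N\DT) = C/T$, which is one of the $\mathcal{O}(1/T)$ contributions. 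The remainder term involving $\phi_{\DT,1,\theta}$ carries a factor $\DT^2$, and by the same Cauchy--Schwarz plus fourth-moment arguments used around~\eqref{eqn:first-order-estimate-RHS-3} its second moment is $\mathcal{O}(\DT^3)$ or smaller, absorbed into the $C_2\DT$ term.

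The genuinely delicate term is the martingale sum. Write $D_n = \widetilde{\theta}_{\DT,n+1} - P_\DT\widetilde{\theta}_{\DT,n}$ and $\Delta Z_n = Z_{n+1}-Z_n = \sigma_n^{-1}F_n\Delta W_n$. I must control $\frac{1}{(N\DT)^2}\mathbb{E}_{\DT}\{(\sum_n D_n)^2 Z_N^2\}$. The crucial point, visible from~\eqref{eq:expansion_discrete_martingale_increment}, is that the leading part of $D_n$ is $\widetilde{\theta}_{\DT,n}^{\prime}\sigma_n\Delta W_n$, of order $\sqrt{\DT}$, and it is correlated with $\Delta Z_n$ through the shared $\Delta W_n$: indeed $\mathbb{E}_{\DT}\{D_n\Delta Z_n\mid\mathcal{F}_n\} = \widetilde{\theta}_{\DT,n}^{\prime}F_n\DT + \mathcal{O}(\DT^2)$. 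The hard part will be organizing the expansion of $(\sum_n D_n)^2 Z_N^2$ so that the dominant $\mathcal{O}(1)$ contribution is identified cleanly while all cross terms are shown to be lower order. I would expand $Z_N = \sum_m \Delta Z_m$ and use the martingale/independence structure: conditioning on $\mathcal{F}_n$ kills most mixed products, leaving diagonal-type sums. The diagonal term $\frac{1}{(N\DT)^2}\sum_n \mathbb{E}_{\DT}\{D_n^2 Z_N^2\}$, using $D_n^2 \approx (\widetilde{\theta}_{\DT,n}^{\prime})^2\sigma_n^2\Delta W_n^2$ of order $\DT$ and $\mathbb{E}_{\DT}\{Z_N^2\}\leq CN\DT$, gives $\frac{1}{(N\DT)^2}\cdot N\cdot\DT\cdot N\DT = \mathcal{O}(1)$, which furnishes the constant $C_1$. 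The correlation between individual $D_n$ and the matching increment $\Delta Z_n$ inside $Z_N$ produces an additional $\mathcal{O}(1)$ piece that must also be absorbed into $C_1$; all remaining off-diagonal and higher-order terms should collapse to $\mathcal{O}(\DT + 1/T)$ after careful conditioning. I expect the main obstacle to be the bookkeeping of these fourth-order-in-$\Delta W$ moment computations and verifying that no term of order larger than the claimed bound survives; the Burkholder--Davis--Gundy inequality and the uniform bounds on $\widetilde{\theta}_{\DT}$ and its derivatives from Theorem~\ref{thm:approx-inverse} will be the workhorses throughout.
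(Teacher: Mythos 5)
Your proposal follows essentially the same route as the paper: bound the second moment via the discrete Poisson equation, split off the boundary term $(N\DT)^{-1}(\widehat{\theta}_{\DT,0}-\widehat{\theta}_{\DT,N})Z_N$ (giving the $C/T$ contribution), replace $\widehat{\theta}_{\DT}$ by $\widetilde{\theta}_{\DT}$ to gain control of derivatives, and reduce the fourth-order sum $\sum\mathbb{E}_{\DT}\{\xi_{n_1}\xi_{n_2}\eta_{n_3}\eta_{n_4}\}$ by the martingale/independence structure to the diagonal terms $\mathbb{E}_{\DT}\{\xi_{n_1}^2\eta_{n_2}^2\}$ and the paired cross terms $\mathbb{E}_{\DT}\{\xi_{n_1}\xi_{n_2}\eta_{n_1}\eta_{n_2}\}$, each of order $\DT^2$ with uniformly bounded coefficients so that the normalization by $(N\DT)^{2}$ yields the $O(1)$ constant. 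The only cosmetic difference is that you introduce the $\DT^{2}\phi_{\DT,1,\theta}$ remainder up front in the decomposition, whereas the paper keeps $\widehat{\theta}_{\DT}$ throughout and controls $\xi_n-\widetilde{\xi}_n$ by the almost-sure bound $C\DT^{2}$ from~\eqref{eqn:approx-inverse-equality-equivalent}; both are equivalent.
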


In essence, the dominant part of the variance~$C_1$ is due to the asymptotic variance of the CLR estimator associated with the underlying continuous dynamics, as given by Theorem~\ref{thm:LR-variance}. The extra term $C_2(h + T^{-1})$ comes from the discretization error and the finiteness of the integration time.

\begin{remark}
 The results of Theorems~\ref{thm:first-order-error} and~\ref{thm:first-order-var} provide a guide to choosing the parameters for the simulation by equilibriating the various sources of errors. More precisely, the bias is the sum of a term of order $1/(Nh)$ and a term of order $h^\alpha$ (with $\alpha=1$ for the first order scheme, but we will see below in Theorem~\ref{thm:second-order-error} that a second order accuracy $\alpha=2$ can be achieved), while the statistical error scales at dominant order as $s^{-1/2}$ when $s$ realizations are considered (as in Algorithm~\ref{alg:first-order-CLR}). The computational cost scales on the other hand as $Ns$, where $N$ is the number of iterations to reach the integration time $T=Nh$. Therefore, the optimization of the parameters amounts to minimizing a function of the form
 \[
 a h^\alpha + \frac{b}{Nh} + \frac{c}{\sqrt{s}}, \qquad Ns = K,
 \]
 with the computational cost $K$ fixed. The Euler-Lagrange equations with respect to $h,N$ show that
 \[
N h^{\alpha+1} = \frac{b}{a \alpha}, \qquad N^{3/2}h = \frac{2b\sqrt{K}}{c},
 \]
 which allows to choose the values of $N,s,h$ as a function of~$K$ provided estimates of $a,b,c$ are available.
\end{remark}

\begin{proof}
  We bound the second moment of the estimator $\mathcal{M}_{\DT, N}^{[1]}(\theta)$. Fist, using the discrete Poisson equation~\eqref{eqn:discrete-Poisson-equation} and the equality
  \[
    \sum_{n=0}^{N-1} (I - P_{\DT}) \widehat{\theta}_{\DT, n} = \widehat{\theta}_{\DT, 0} - \widehat{\theta}_{\DT, N} + \sum_{n=0}^{N-1} \widehat{\theta}_{\DT, n+1}- P_\DT \widehat{\theta}_{\DT,n},
  \]
  we immediately have
\begin{equation*}
    \begin{split}
      &\mathbb{E}_{\DT}\left\{ \left(\mathcal{M}_{\DT, N}^{[1]}(\theta)\right)^2\right\}=
    \mathbb{E}_{\DT}\left\{\left( \left[\frac{1}{N\DT} \sum_{n=0}^{N-1}(I - P_{\DT}) \widehat{\theta}_{\DT, n}\right] Z_N \right)^2\right\}\\
& \quad \leq
\frac{2}{N^2\DT^2} \mathbb{E}_{\DT}\left\{\left(\sum_{n=0}^{N-1}\left(\widehat{\theta}_{\DT, n+1} - P_{\DT}\widehat{\theta}_{\DT, n}\right) Z_N\right)^2  \right\}
+
\frac{2}{N^2\DT^2}\mathbb{E}_{\DT}\left\{ \left((\widehat{\theta}_{\DT, N} - \widehat{\theta}_{\DT, 0})Z_N \right)^2\right\}.
    \end{split}
\end{equation*}
Since $\widehat{\theta}_{\DT, N} -\widehat{\theta}_{\DT, 0}$ is uniformly bounded in $\DT$ and $\mathbb{E}_{\DT}(Z_N^2)\leq CN\DT$, it is easy to verify that the second term on the right-hand side of the above inequality is bounded by $C T^{-1}$. Hence, it only remains to estimate the first term on the right-hand side of the above inequality. For each $n = 0, 1, \ldots, N-1$, it is convenient to denote the martingale increments by 
\[
\xi_n = \widehat{\theta}_{\DT, n+1} - P_{\DT}\widehat{\theta}_{\DT, n}, \qquad \eta_n = Z_{n+1} - Z_n.
\]
We can then write
\begin{equation*}
\begin{split}
\mathbb{E}_{\DT}\left\{\left(\sum_{n=0}^{N-1}\left(\widehat{\theta}_{\DT, n+1} - P_{\DT}\widehat{\theta}_{\DT, n}\right) Z_N\right)^2  \right\}
=
\sum_{n_1,n_2, n_3, n_4=0}^{N-1}\mathbb{E}_{\DT}\left\{  \xi_{n_1} \xi_{n_2} \eta_{n_3}\eta_{n_4}\right\}.
\end{split}
\end{equation*}
Note that both $\xi_n$ and $\eta_n$ depend on $\Delta W_n$.
Since the sequence of normal random variables $\Delta W_n$ are independently and identically distributed, we can verify that
\begin{equation}\label{eqn:first-order-scheme-var-proof}
\sum_{n_1,n_2, n_3, n_4=0}^{N-1} \!\!\!\!\!\!\!\! \mathbb{E}_{\DT}\left\{  \xi_{n_1} \xi_{n_2} \eta_{n_3}\eta_{n_4}\right\} 
= 
\sum_{n_1 = 0}^{N-1}\sum_{n_2=0}^{N-1} \mathbb{E}_{\DT}\{\xi_{n_1}^2\eta_{n_2}^2\}
+ 
2 \!\!\!\!\!\!\!\!\! \sum_{0 \leq n_1 < n_2 \leq N-1}^{N-1} \!\!\!\!\!\!\!\!\! \mathbb{E}_{\DT}\{\xi_{n_1}\xi_{n_2}\eta_{n_1}\eta_{n_2}\}.
\end{equation}
In order to estimate the two terms on the right-hand side of the above equation, we need to expand $\xi_n$ in terms of $\DT$. However, recall that $\widehat{\theta}_{\DT}$ is not necessarily in $\mathcal{S}$. Again, the strategy is to replace $\widehat{\theta}_{\DT}$ by $\widetilde{\theta}_{\DT}$ using the approximate inverse argument. To this end, we consider the trivial decomposition
\[
  \xi_n = \widetilde{\xi}_n + (\xi_n - \widetilde{\xi}_n),
  \qquad
  \widetilde{\xi}_n = \widetilde{\theta}_{\DT,n+1} - P_{\DT}\widetilde{\theta}_{\DT, n}.
\]
A crude estimate for $\xi_n - \widetilde{\xi}_n$ is obtained with~\eqref{eqn:approx-inverse-equality-equivalent}. Indeed, 
\[
\xi_n - \widetilde{\xi}_n  = -\DT^3\left[I-P_{\DT}\right]^{-1}  (\phi_{\DT, 1, \theta}(X_{n+1}) - \mu_{\DT}(\phi_{\DT, 1, \theta})) + \DT^3\left[I-P_{\DT}\right]^{-1}P_{\DT} (\phi_{\DT, 1, \theta}(X_n) - \mu_{\DT}(\phi_{\DT, 1, \theta})) .
\]
Since $\phi_{\DT, 1, \theta}$ is uniformly bounded in~$B^\infty$ and $\|\left[I-P_{\DT}\right]^{-1}\|_{\mathcal{B}(B_{\DT}^{\infty})} \leq 2\kappa^{-1} \DT^{-1}C$ for $h$ sufficiently small by~\eqref{eq:bound_inverse_Ph}, we immediately have 
\begin{equation}\label{eqn:first-order-scheme-var-diff}
\forall n \geq 0, \qquad |\xi_n - \widetilde{\xi}_n| \leq C \DT^{2} \qquad \mathrm{a.s.}
\end{equation}
Let us next state some moments estimates involving $\widetilde{\xi}_n$ and $\eta_n$:
\[
  \begin{aligned}
\mathbb{E}_{\DT}\left\{\widetilde{\xi}_n^2\eta_n^2\right\} & \leq \mathbb{E}_{\DT}\left\{\left(\widetilde{\theta}_{\DT, n}^{\prime} F_n\right)^2\right\} \DT^2 + C\DT^3, \\
\mathbb{E}_{\DT}\left\{\widetilde{\xi}_{n_1}^2 \eta_{n_2}^2\right\} & \leq \mathbb{E}_{\DT}\left\{\left(\widetilde{\theta}_{\DT, n_1}^{\prime} \sigma_{n_1} U_{n_2}\right)^2\right\} \DT^2 + C\DT^3, \quad n_1 \neq n_2,  \\
\mathbb{E}_{\DT}\left\{\widetilde{\xi}_{n_1}\widetilde{\xi}_{n_2}\eta_{n_1}\eta_{n_2} \right\} & \leq \mathbb{E}_{\DT}\left\{\widetilde{\theta}_{\DT, n_1}^{\prime} F_{n_1} \widetilde{\theta}_{\DT, n_2}^{\prime} F_{n_2}\right\}\DT^2 + C \DT^3,
\quad n_1 \neq n_2,
\end{aligned}
\]
which can be easily derived from the expansion~\eqref{eq:expansion_discrete_martingale_increment}.
Note that all the coefficients of the $\DT^2$ terms in the above estimates are uniformly bounded in both $\DT$ and $n$ since we assume that the state space is compact. 
Combining the above moment estimates with \eqref{eqn:first-order-scheme-var-proof} and \eqref{eqn:first-order-scheme-var-diff}, a simple estimation shows that there exist positive constants $C_1,C_2$ such that, for $\DT$ sufficiently small, 
\[
\left|\frac{1}{N^2\DT^2}\sum_{n_1,n_2, n_3, n_4=0}^{N-1}\mathbb{E}_{\DT}\left\{  \xi_{n_1} \xi_{n_2} \eta_{n_3}\eta_{n_4}\right\} \right| \leq C_1 + C_2\DT.
\]
The final estimate follows by taking the error $CT^{-1}$ into account.
\end{proof}


\subsection{Weak second order CLR scheme}\label{sec:second-order-scheme}
{
In this section, we study a weak second order CLR scheme based on the numerical discretization scheme~\eqref{eqn:second-order-scheme}.
We show that in the scalar case, $d=1$,
an appropriate modification of the original weight process $Z_n$ is 
\begin{equation}
  \label{eqn:discrete-Z_2-scalar}
  Y_{n + 1} = Y_n + (\sigma_n^{-1} F_n)\left( \Delta W_n +  \frac{1}{2}\left(-\mathcal{L}\sigma_n + \mathcal{K} b_n\right) \sigma_n^{-1} \Delta W_n \DT\right)\,.
\end{equation}
In the multi-dimensional case
we are only able to treat the case when the diffusion $\sigma(x)$ is state independent, i.e., $\sigma_n = \sigma$ for all $n$ and a given constant matrix $\sigma$.
In this case of an additive noise the term $\mathcal{L}\sigma$ in \eqref{eqn:discrete-Z_2-scalar} vanishes and an appropriate choice of the modified weight process is 
\begin{equation}
  \label{eqn:discrete-Z_2-vector}
  Y_{n + 1} = Y_n + (\sigma^{-1} F_n)^{\TR}\left( \Delta W_n +  \frac{1}{2}\left(\mathcal{K} b_n\right)^{\TR} \sigma^{-\TR} \Delta W_n \DT\right),
\end{equation}
where $\mathcal{K} b$ is a matrix with columns $\mathcal{K}^k b$ for $k = 1, \ldots, d$. In order to keep the notation simple we present the calculations for the scalar case and refer to Appendix~\ref{sec:appendix} for details about algebraic derivations in the multi-dimensional case with an additive noise.
}
It is easy to verify that the modified process $Y_n$ is still a zero-mean martingale. With the definition \eqref{eqn:discrete-Z_2-vector}, the second order CLR estimator is
\begin{equation}\label{eqn:second-order-CLR-estimator}
    \mathcal{M}_{\DT, N}^{[2]}(\theta) = \frac{1}{N} \left[\sum_{n=0}^{N-1} \theta_n - \mu_{\DT}(\theta)\right] Y_N + \frac{\DT}{2N}\sum_{n=0}^{N-1}\nabla\theta_n^{\TR}  F_n,
\end{equation}
where the extra term at the end is a correction term specific for the second-order scheme.
We present the weak second order CLR algorithm in Algorithm~\ref{alg:second-order-CLR}. 
\begin{algorithm}
\caption{Pseudo-code for the weak second order CLR algorithm}
\label{alg:second-order-CLR}
\begin{algorithmic}[1]
\STATE{Choose integration time $T$, time step $\DT$, number of realizations~$s$}
\STATE{Define number of steps $N = \left \lfloor{T / \DT}\right \rfloor $}
\FOR{$i = 1 : s$}
\STATE{Initialize the starting state $X_0^{(i)} \sim \MUINI, Y_0^{(i)} = 0$ and running averages $\alpha_0^{(i)} =0, \beta_0^{(i)} = 0$}
\FOR{$n = 1 : N$}
\STATE{Update $\alpha_{n+1}^{(i)} = \alpha_n^{(i)} + N^{-1}\theta(X_n^{(i)})$}
\STATE{Update $\beta_{n+1}^{(i)} = \beta_n^{(i)} + N^{-1} \nabla \theta(X_N^{(i)})^{\TR} F(X_N^{(i)})$}
\STATE{Generate random numbers $\Delta W_n^{(i)} \sim \mathcal{N}(0, \DT I_d)$} and $V_n^{(i)}$ according to~\eqref{eqn:V-dist}
\STATE{Update $X_{n+1}^{(i)} = X_n^{(i)} + \Phi_{\DT}(X_n^{(i)},\Delta w_n^{(i)},V_n^{(i)})$}
\STATE{Update $\displaystyle Y_{n+1}^{(i)}$ according to \eqref{eqn:discrete-Z_2-scalar} in the scalar setting or \eqref{eqn:discrete-Z_2-vector}} in the multi-dimensional setting
\STATE{Increment $n$ as $n + 1$}
\ENDFOR
\ENDFOR
\STATE{Compute the empirical average $\displaystyle \bar{\alpha}_N = s^{-1} \sum_{i=1}^s \alpha_N^{(i)}$}
\RETURN{$\displaystyle s^{-1} \sum_{i = 1}^s \left(\alpha_N^{(i)} - \bar{\alpha}_N\right)Y_N^{(i)} + \frac{\DT}{2s} \sum_{i=1}^s\beta_N^{(i)}$}
\end{algorithmic}
\end{algorithm}

\subsubsection{Consistency of the second order CLR scheme}

The following result is the counterpart of Theorem~\ref{thm:first-order-error} for the weak second order CLR scheme~\eqref{eqn:second-order-scheme}.

\begin{theorem}\label{thm:second-order-error}
  Consider an observable $\theta \in \mathcal{S}$ and the weak second order scheme~\eqref{eqn:second-order-scheme}. There exist $\DT^* > 0$ and $C \in \mathbb{R}_+$ such that, for any $\DT \in (0, \DT^*]$,
\begin{equation}\label{eqn:second-order-error}
   \left|\mathbb{E}_{\DT}\left( \mathcal{M}_{\DT, N}^{[2]}(\theta) \right) - \rho(\theta)\right| \leq C \left(\DT^2 + \frac{1}{\sqrt{N\DT}}\right).
\end{equation}
\end{theorem}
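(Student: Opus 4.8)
The plan is to follow the proof of Theorem~\ref{thm:first-order-error}, pushing every expansion one order further in~$\DT$ and exploiting the fact that the modified weight~\eqref{eqn:discrete-Z_2-scalar} and the correction term in~\eqref{eqn:second-order-CLR-estimator} are tailored to annihilate the $\mathcal{O}(\DT)$ part of the bias. As in the previous proofs I work in the scalar case $d=1$, the additive-noise multidimensional case being deferred to Appendix~\ref{sec:appendix}. Since $Y_N$ is a zero-mean martingale,
\[
\mathbb{E}_\DT\{\mathcal{M}_{\DT,N}^{[2]}(\theta)\} = \frac{1}{N}\sum_{n=0}^{N-1}\mathbb{E}_\DT\left\{(\theta_n-\mu_\DT(\theta))Y_N\right\} + \frac{\DT}{2N}\sum_{n=0}^{N-1}\mathbb{E}_\DT\left\{\theta_n' F_n\right\}.
\]
Using the discrete Poisson equation~\eqref{eqn:discrete-Poisson-equation} and inserting projections $\Pi$ freely against the mean-zero $Y_N$, the first sum becomes $\frac{1}{N\DT}\sum_n\mathbb{E}_\DT\{[\Pi(I-P_\DT)\Pi\widehat\theta_{\DT,n}]Y_N\}$. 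For the scheme~\eqref{eqn:second-order-scheme} the evolution operator expands as $P_\DT = I + \DT\mathcal{L} + \tfrac12\DT^2\mathcal{L}^2 + \DT^3\mathcal{A}_3 + \DT^4\mathcal{R}_{2,\DT}$, which satisfies the hypotheses of Theorem~\ref{thm:approx-inverse}; applying it with $p=2$ I would replace $\widehat\theta_\DT$ by the smooth approximation $\widetilde\theta_\DT$, the remainder $\DT^3 N^{-1}\sum_n\mathbb{E}_\DT\{\phi_{\DT,2,\theta}(X_n)Y_N\}$ being negligible by the $Y_N$-analogue of Lemma~\ref{lemma:elementary-coarse-estimate} together with $\mathbb{E}_\DT\{Y_N^2\}\leq CN\DT$ (which holds because $Y_N$ has $\Delta W_n$-driven increments).

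Next I would reorganize the telescoping sum,
\[
\sum_{n=0}^{N-1}(I-P_\DT)\widetilde\theta_{\DT,n} = \widetilde\theta_{\DT,0}-\widetilde\theta_{\DT,N}+\sum_{n=0}^{N-1}\left(\widetilde\theta_{\DT,n+1}-P_\DT\widetilde\theta_{\DT,n}\right).
\]
The boundary term is $\mathcal{O}((N\DT)^{-1/2})$ by Cauchy--Schwarz and $\mathbb{E}_\DT\{Y_N^2\}\leq CN\DT$, which produces the $1/\sqrt{N\DT}$ contribution in~\eqref{eqn:second-order-error}. Because $\widetilde\theta_{\DT,n+1}-P_\DT\widetilde\theta_{\DT,n}$ and $Y_{n+1}-Y_n$ are both martingale increments for the natural filtration, the conditioning argument of the first-order proof reduces $\mathbb{E}_\DT\{(\widetilde\theta_{\DT,n+1}-P_\DT\widetilde\theta_{\DT,n})Y_N\}$ to the one-step quantity $\mathbb{E}_\DT\{(\widetilde\theta_{\DT,n+1}-P_\DT\widetilde\theta_{\DT,n})(Y_{n+1}-Y_n)\}$.

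The core computation is the expansion of this one-step product to order~$\DT^2$. Here I would Taylor-expand $\widetilde\theta_{\DT,n+1}=\widetilde\theta_\DT(X_n+\Phi_{\DT,n})$ using the second-order increment $\Phi_{\DT,n}$, whose extra terms $\tfrac12(\mathcal{L}\sigma_n+\mathcal{K}b_n)\DT\Delta W_n$, $\tfrac12\mathcal{K}\sigma_n(\Delta W_n^2-\DT)$ and $\tfrac12\mathcal{L}b_n\DT^2$ (recall $V_n=-\DT$ in the scalar case by~\eqref{eqn:V-dist}) now contribute at order~$\DT$, subtract $P_\DT\widetilde\theta_{\DT,n}$ through the semigroup expansion to order $\DT^2$, multiply by the modified increment $Y_{n+1}-Y_n=(\sigma_n^{-1}F_n)(\Delta W_n+\tfrac12(-\mathcal{L}\sigma_n+\mathcal{K}b_n)\sigma_n^{-1}\DT\Delta W_n)$, and take Gaussian expectations of the $\Delta W_n$-monomials. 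The leading term is $\mathbb{E}_\DT\{\widetilde\theta_{\DT,n}' F_n\}$, exactly as in the first-order case, while the $\mathcal{O}(\DT)$ coefficient is a specific differential expression $\mathbb{E}_\DT\{E_n\}$ whose precise form the algebra must pin down. I expect this bookkeeping---verifying that the choices~\eqref{eqn:discrete-Z_2-scalar} and the correction term combine so that, after summing over $n$ and adding $\tfrac{\DT}{2N}\sum_n\mathbb{E}_\DT\{\theta_n'F_n\}$, the surviving $\mathcal{O}(\DT)$ part reduces to a $\DT$-multiple of time averages of functions with zero $\mu$-average---to be the main obstacle, and the only place where the detailed structure of the second-order scheme is used.

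It then remains to collect the pieces. Writing the outcome of the previous step as
\[
\mathbb{E}_\DT\{\mathcal{M}_{\DT,N}^{[2]}(\theta)\} = \frac{1}{N}\sum_{n=0}^{N-1}\mathbb{E}_\DT\{\widetilde\theta_{\DT,n}'F_n\} + \frac{\DT}{N}\sum_{n=0}^{N-1}\mathbb{E}_\DT\left\{E_n+\tfrac12\theta_n'F_n\right\} + \mathcal{O}\left(\DT^2+\frac{1}{\sqrt{N\DT}}\right),
\]
I would apply the intermediate second-order estimate~\eqref{eqn:second-order-time-averaging} of Proposition~\ref{thm:time-averaging-consistency} to $\varphi=\widetilde\theta_\DT'F\in\mathcal{S}$ (whose $C^4$-norm is uniformly bounded in $\DT$ by~\eqref{eqn:diff-discrete-continuous-Poisson} and Theorem~\ref{thm:approx-inverse}) to turn the first sum into $\mu(\widetilde\theta_\DT'F)$ up to $\mathcal{O}(\DT^2+(N\DT)^{-1})$ plus a $\DT$-multiple of the time average of $\mathcal{L}(\widetilde\theta_\DT'F)$, the latter having zero $\mu$-average and hence being $\mathcal{O}(\DT^2+N^{-1})$. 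The $\DT$-prefactored second sum is handled to leading order by ergodicity, giving $\DT\,\mu(E+\tfrac12\theta'F)$. The final cancellation is that $\mu(\widetilde\theta_\DT'F)=\rho(\theta)+\DT\,\mu(\chi'F)+\mathcal{O}(\DT^2)$, where $\widetilde\theta_\DT=\widehat\theta+\DT\chi+\mathcal{O}(\DT^2)$ is obtained from the approximate inverse, and the scheme is designed so that $\mu(\chi'F)+\mu(E+\tfrac12\theta'F)=0$; together with $\mu(\widehat\theta'F)=\rho(\theta)$ from Proposition~\ref{thm:alternative-linear-response-formula} this yields the bound~\eqref{eqn:second-order-error}.
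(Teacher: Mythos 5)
Your scaffolding is exactly the paper's: discrete Poisson equation plus projections, Theorem~\ref{thm:approx-inverse} with $p=2$ to pass from $\widehat{\theta}_{\DT}$ to $\widetilde{\theta}_{\DT}$, the telescoping/conditioning reduction to the one-step product $\DT^{-1}\mathbb{E}_{\DT}\{(\widetilde{\theta}_{\DT,n+1}-P_{\DT}\widetilde{\theta}_{\DT,n})(Y_{n+1}-Y_n)\}$, the $(N\DT)^{-1/2}$ boundary term, and the $\DT^{5/2}$ bound on the remainder via Lemma~\ref{lemma:elementary-coarse-estimate}. All of that is correct. But the proposal stops exactly where the theorem's content begins: you write the $\mathcal{O}(\DT)$ coefficient of the one-step product as an unspecified $\mathbb{E}_{\DT}\{E_n\}$ ``whose precise form the algebra must pin down,'' and you close the argument by asserting that ``the scheme is designed so that $\mu(\chi' F)+\mu(E+\tfrac12\theta' F)=0$.'' That assertion is precisely what has to be proved --- the whole point of the modified weight~\eqref{eqn:discrete-Z_2-scalar} and of the correction term in~\eqref{eqn:second-order-CLR-estimator} is that this cancellation holds, and invoking the design of the scheme to justify it is circular. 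As written, the proposal is a correct plan, not a proof.

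The two missing computations are concrete and short. First, carrying out the Gaussian moment bookkeeping for the one-step product with the increments of~\eqref{eqn:second-order-scheme} and~\eqref{eqn:discrete-Z_2-scalar} gives
\[
E_n=\left(\widetilde{\theta}_{\DT,n}^{\prime}\,\sigma_n^{-1}\mathcal{K}b_n+\widetilde{\theta}_{\DT,n}^{(2)}(b_n+\mathcal{K}\sigma_n)+\tfrac12\widetilde{\theta}_{\DT,n}^{(3)}\sigma_n^2\right)F_n,
\]
and the key structural identity is that the bracket equals $\bigl(\mathcal{L}\widetilde{\theta}_{\DT}\bigr)^{\prime}(X_n)$ (since $\sigma^{-1}\mathcal{K}b=b^{\prime}$ and $\mathcal{K}\sigma=\sigma\sigma^{\prime}$). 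Second, since $\mathcal{A}_2=\tfrac12\mathcal{L}^2$ for the scheme~\eqref{eqn:second-order-scheme}, formula~\eqref{eqn:diff-discrete-continuous-Poisson-pointwise} gives $\chi=\widetilde{\mathcal{A}}_1^{-1}\widetilde{\mathcal{A}}_2\widetilde{\mathcal{A}}_1^{-1}(\theta-\mu(\theta))=\tfrac12(\theta-\mu(\theta))$, so $\mu(\chi^{\prime}F)=\tfrac12\mu(\theta^{\prime}F)$; and the continuous Poisson equation~\eqref{eqn:continuous-Poisson-equation} gives $\mu\bigl((\mathcal{L}\widehat{\theta})^{\prime}F\bigr)=-\mu(\theta^{\prime}F)$, so that $\mu(\chi^{\prime}F)+\mu(E)+\tfrac12\mu(\theta^{\prime}F)=\tfrac12\mu(\theta^{\prime}F)-\mu(\theta^{\prime}F)+\tfrac12\mu(\theta^{\prime}F)=0$ up to $\mathcal{O}(\DT)$ corrections that are harmless after multiplication by the prefactor~$\DT$. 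These two facts are the proof; without them your argument establishes only that the error is $\mathcal{O}(\DT+(N\DT)^{-1/2})$, i.e.\ nothing beyond Theorem~\ref{thm:first-order-error}.
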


The proof provided below assumes the modified martingale~\eqref{eqn:discrete-Z_2-scalar} and only works for the scalar setting. 
For the multi-dimensional setting, the modified martingale~\eqref{eqn:discrete-Z_2-vector} leads to a second order CLR scheme under the additional assumption of additive noise.
Nevertheless, the result is already relevant as such for applications such as molecular dynamics where the noise is often considered to be additive. For completeness, we provide the algebra for justifying the correctness of~\eqref{eqn:discrete-Z_2-vector} in Appendix~\ref{sec:appendix}.

\begin{proof}
  The strategy of the proof is the same as for the weak first order scheme. We write it as for the other proofs in the scalar case ($d = 1$). The second order discretization scheme then reads
  \begin{equation*}
    \begin{split}
      X_{n+1}  = X_n + & \sigma_n \Delta W_n + b_n \DT 
      + \frac{1}{2}\mathcal{K}\sigma_n \left((\Delta W_n)^2  - \DT\right) + \frac{1}{2}\left(\mathcal{K}b_n + \mathcal{L}\sigma_n\right) \Delta W_n \DT + \frac{1}{2}\mathcal{L}b_n\DT^2,
\end{split}
\end{equation*}
together with the modified weight 
\begin{equation}
  \label{eq:modified_weight_1D}
  Y_{n+1} = Y_n + \sigma_n^{-1} F_n \Delta W_n + \frac{1}{2}\sigma_n^{-2} F_n\left(-\mathcal{L}\sigma_n + \mathcal{K}b_n\right) \Delta W_n\DT.
\end{equation}
We can split the estimator into three separate terms:
\begin{equation}\label{eqn:second-order-estimate-proof}
\begin{split}
&\left|\frac{1}{N}\sum_{n = 0}^{N-1}\mathbb{E}_{\DT}\left\{(\theta_n - \mu_{\DT}(\theta)) Y_N\right\} + \frac{\DT}{2N}\sum_{n=0}^{N-1} \mathbb{E}_{\DT}\{\theta_n^{\prime}F_n\} - \rho(\theta)\right|\\
& \qquad \leq 
 \left|\frac{1}{N \DT}\sum_{n = 0}^{N-1}\mathbb{E}_{\DT}\left\{\left(\widetilde{\theta}_{\DT, n+1} - P_{\DT}\widetilde{\theta}_{\DT, n}\right) Y_N\right\}  + \frac{\DT}{2N}\sum_{n=0}^{N-1} \mathbb{E}_{\DT}\{\theta_n^{\prime}F_n\} - \rho(\theta) \right|\\
& \ \ \qquad + \left|\frac{1}{N \DT}\mathbb{E}_{\DT}\left\{\left(\widetilde{\theta}_{\DT, N} - \widetilde{\theta}_{\DT, 0} \right) Y_N\right\}\right|
+ \left|\frac{\DT^3}{N} \sum_{n = 0}^{N-1}\mathbb{E}_{\DT}\left\{ \phi_{\DT, 2, \theta}(X_n)  Y_N \right\}\right|,
\end{split}
\end{equation}
where $\phi_{\DT, 2, \theta}$ is the function obtained from Theorem~\ref{thm:approx-inverse} with $p = 2$. Using the same argument as for the proof of Theorem~\ref{thm:first-order-error}, the second and last terms on the right-hand side of \eqref{eqn:second-order-estimate-proof} can be bounded by $C N^{-1/2} \DT^{-1/2}$ and $C\DT^{5/2}$, respectively.

It remains to estimate the first term on the right-hand side of~\eqref{eqn:second-order-estimate-proof}. First, denoting by $\Phi_{\DT,n} = X_{n+1}-X_n = \Phi_{\DT}(X_n,\Delta W_n,V_n)$, we expand 
\begin{equation*}
\begin{split}
\widetilde{\theta}_{\DT, n+1} = 
\widetilde{\theta}_{\DT, n}
&+ \widetilde{\theta}_{\DT, n}^{\prime}  \Phi_{\DT, n}
+ \frac{1}{2}\widetilde{\theta}_{\DT, n}^{(2)}  \Phi_{\DT, n}^{2} 
+ \frac{1}{6}\widetilde{\theta}_{\DT, n}^{(3)}  \Phi_{\DT, n}^{3} + r_{\DT, \theta, n}, 
\end{split}
\end{equation*}
with the remainder
\[
r_{\DT, \theta, n} = \left(\frac{1}{6}\int_0^1 u^3  \widetilde{\theta}_{\DT}^{(4)}(X_n + u \Phi_{\DT, n})  \,du\right)\Phi_{\DT, n}^{4}.
\]
Gathering the terms with the same powers of~$\DT$,
\begin{equation*}
  \begin{split}
&  \widetilde{\theta}_{\DT, n+1} 
= \widetilde{\theta}_{\DT, n} 
 + \widetilde{\theta}_{\DT, n}^{\prime}\sigma_n \Delta W_n + \left\{ \widetilde{\theta}_{\DT, n}^{\prime}\left(b_n \DT + \frac{1}{2}\sigma_n \sigma_n^{\prime} ((\Delta W_n)^2 - \DT)\right) + \frac{1}{2}\widetilde{\theta}_{\DT, n}^{(2)}\sigma_n^2 (\Delta W_n)^2  \right\}
\\
& + \left\{ \frac{1}{2} \widetilde{\theta}_{\DT, n}^{\prime}(\mathcal{K}b_n + \mathcal{L} \sigma_n) \Delta W_n \DT 
 +
 \widetilde{\theta}_{\DT, n}^{(2)} \sigma_n \left(b_n \DT + \frac{1}{2}\mathcal{K}\sigma_n ((\Delta W_n)^2 - \DT)\right)\Delta W_n + 
 \frac{1}{6} \widetilde{\theta}_{\DT, n}^{(3)}\sigma_n^3 (\Delta W_n)^3
 \right\} + \psi_{\DT, \theta, n},
\end{split}
\end{equation*}
where $\psi_{\DT, \theta, n}$ is a remainder term of order~$h^2$ in the following sense: for all $k \geq 1$, there exists $C_k \in \mathbb{R}_+$ such that $\mathbb{E}_\DT\left(\left|\psi_{\DT, \theta, n}\right|^k\right) \leq C_k\DT^{2k}$. Recall that the second order scheme \eqref{eqn:second-order-scheme} admits the expansion
\begin{equation*}
P_{\DT} \widetilde{\theta}_{\DT, n}= \widetilde{\theta}_{\DT, n} 
+ \mathcal{L} \widetilde{\theta}_{\DT, n}\DT
+ \frac12 \mathcal{L}^2 \widetilde{\theta}_{\DT, n}\DT^2 
+ \mathcal{A}_3 \widetilde{\theta}_{\DT, n}\DT^3 
+ \mathcal{R}_{2, \DT}\widetilde{\theta}_{\DT, n}\DT^4
\end{equation*}
for some operators $\mathcal{A}_3, \mathcal{R}_{2, \DT}$. Multiplying $\widetilde{\theta}_{\DT, n+1} - P_{\DT} \widetilde{\theta}_{\DT, n}$ by $Y_{n+1} - Y_n$, a simple calculation leads to
\begin{equation*}
\begin{split}
& \frac{1}{\DT} \mathbb{E}_{\DT}\left\{\left(\widetilde{\theta}_{\DT, n+1} - P_{\DT}\widetilde{\theta}_{\DT, n}\right) Y_N\right\}  = \frac{1}{\DT}\mathbb{E}_{\DT}\left\{\left(\widetilde{\theta}_{\DT, n+1} - P_{\DT}\widetilde{\theta}_{\DT, n}\right)(Y_{n+1} - Y_n) \right\}\\
& \qquad =
\mathbb{E}_{\DT}\left\{\widetilde{\theta}_{\DT, n}^{\prime} F_n \right\}
+ \mathbb{E}_{\DT}\left\{\left( \widetilde{\theta}_{\DT, n}^{\prime} \sigma_n^{-1}\mathcal{K}b_n + \widetilde{\theta}_{\DT, n}^{(2)} (b_n + \mathcal{K}\sigma_n) +\frac{1}{2} \widetilde{\theta}_{\DT, n}^{(3)}\sigma_n^2\right)F_n \right\}\DT
+ \widetilde{\Psi}_{\DT, \theta, n} \DT^2,
\end{split}
\end{equation*}
where $\widetilde{\Psi}_{\DT, \theta, n}$ is uniformly bounded for $\DT \in (0, \DT^*]$. Now, the key observation is that 
\[
\widetilde{\theta}_{\DT}^{\prime} \sigma^{-1}\mathcal{K}b + \widetilde{\theta}_{\DT}^{(2)} (b + \mathcal{K}\sigma) +\frac{1}{2} \widetilde{\theta}_{\DT}^{(3)}\sigma^2 = \left(\mathcal{L}\widetilde{\theta}_{\DT}\right)^{\prime}.
\]
Hence, 
\begin{equation*}
\frac{1}{\DT}\mathbb{E}_{\DT}\left\{\left(\widetilde{\theta}_{\DT, n+1} - P_{\DT}\widetilde{\theta}_{\DT, n}\right)(Y_{n+1} - Y_n) \right\} = \mathbb{E}_{\DT}\left\{\widetilde{\theta}_{\DT, n}^{\prime} F_n \right\}
+ \mathbb{E}_{\DT}\left\{\left( \mathcal{L}\widetilde{\theta}_{\DT, n} \right)^{\prime}F_n \right\}\DT
+ \widetilde{\Psi}_{\DT, \theta, n} \DT^2.
\end{equation*}
Taking the average of the above equation over $n$ and collecting the remainder terms, we obtain the following estimate for the first term of the right-hand side of~\eqref{eqn:second-order-estimate-proof}:
\begin{equation*}
\begin{split}
&\left|\frac{1}{N\DT}\sum_{n=0}^{N-1}\mathbb{E}_{\DT}\left\{\left(\widetilde{\theta}_{\DT, n+1} - P_{\DT}\widetilde{\theta}_{\DT, n}\right)Y_N \right\} + \frac{\DT}{2N}\sum_{n=0}^{N-1} \mathbb{E}_{\DT}\{\theta_n^{\prime}F_n\} - \rho(\theta) \right|
\\
& \qquad \leq  \left|\frac{1}{N}\sum_{n=0}^{N-1}\mathbb{E}_{\DT}\left\{\widetilde{\theta}_{\DT, n}^{\prime} F_n \right\} - \rho(\theta) + \frac{\DT}{N}\sum_{n=0}^{N-1}\mathbb{E}_{\DT}\left\{\left(\mathcal{L}\widetilde{\theta}_{\DT, n}\right)^{\prime}F_n  + \frac{1}{2}\theta_n^{\prime}F_n \right\} \right| + C\DT^2
\end{split}
\end{equation*}
for some constant $C>0$.
Note that here there exists, by Proposition~\ref{thm:time-averaging-consistency} and Theorem~\ref{thm:approx-inverse}, a constant $C \in \mathbb{R}_+$ independent of $\DT$ such that
\[
  \begin{aligned}
    \left|\frac{1}{N}\sum_{n=0}^{N-1}\mathbb{E}_{\DT}\left\{\widetilde{\theta}_{\DT, n}^{\prime} F_n \right\} - \mu\left(\widetilde{\theta}_{\DT}^{\prime} F\right)\right| & \leq C\left(\DT^2 + \frac{1}{N\DT}\right), \\
    \left|\frac{1}{N}\sum_{n=0}^{N-1}\mathbb{E}_{\DT}\left\{ \left(\mathcal{L}\widetilde{\theta}_{\DT, n}\right)^{\prime} F_n + \frac{1}{2}\theta_n^{\prime}F_n\right\} - \mu\left( \left(\mathcal{L}\widetilde{\theta}_{\DT}\right)^{\prime} F + \frac{1}{2}\theta^{\prime}F\right)\right| & \leq C\left(\DT^2 + \frac{1}{N\DT}\right).
    \end{aligned}
\]
Therefore, 
\begin{equation}\label{eqn:second-order-error-before-correction}
\begin{split}
&\left|\frac{1}{N\DT}\sum_{n=0}^{N-1}\mathbb{E}_{\DT}\left\{\left(\widetilde{\theta}_{\DT, n+1} - P_{\DT}\widetilde{\theta}_{\DT, n}\right)Y_N \right\}
+
\frac{\DT}{2N}\sum_{n=0}^{N-1} \mathbb{E}_{\DT}\{\theta_n^{\prime}F_n\} - \rho(\theta) \right|
\\
& \qquad \leq
\left|\mu\left(\widetilde{\theta}_{\DT}^{\prime}F\right)
-
\rho(\theta)
+
\mu\left( (\mathcal{L}\widetilde{\theta}_{\DT})^{\prime} F + \frac{1}{2}\theta^{\prime}F\right)\DT
\right|
+ C\left(\DT^2 +\frac{1}{N\DT}\right)\\
& \qquad \leq
\left|\mu(\widetilde{\theta}_{\DT}^{\prime}F)
-
\rho(\theta) - \frac{1}{2}\mu(\theta^{\prime}F)\DT\right|
+
\left|
\mu\left((\mathcal{L}\widetilde{\theta}_{\DT})^{\prime} F - (\mathcal{L}\widehat{\theta})^{\prime}F\right)\DT
\right|
+ C\left(\DT^2 +\frac{1}{N\DT}\right),
\end{split}
\end{equation}
where we have used the continuous time Poisson equation~\eqref{eqn:continuous-Poisson-equation} for the last inequality.
Next, we show that $\widetilde{\theta}_{\DT}$ on the right-hand side of the above inequality can be replaced by $\widehat{\theta}$ with a controllable error. To this end, we set $p=2$ in~\eqref{eqn:diff-discrete-continuous-Poisson-pointwise} and note that $\mathcal{A}_1 = \mathcal{L}$ and $\mathcal{A}_2 = \mathcal{L}^2/2$ for the second order scheme \eqref{eqn:second-order-scheme}, so that 
\begin{equation}\label{eqn:diff-Poisson-solution}
\widetilde{\theta}_{\DT} - \widehat{\theta} = \frac12 (\theta - \mu(\theta)) \DT -  \left(\widetilde{\mathcal{A}}_1^{-1} \widetilde{\mathcal{A}}_2 \widetilde{\mathcal{A}}_1^{-1}\widetilde{\mathcal{A}}_2 \widetilde{\mathcal{A}}_1^{-1} - \widetilde{\mathcal{A}}_1^{-1}\widetilde{\mathcal{A}}_3 \widetilde{\mathcal{A}}_1^{-1}\right)(\theta - \mu(\theta))\DT^2.
\end{equation}
Since $\mu(\widehat{\theta}^{\prime}F)=\rho(\theta)$ by Theorem~\ref{thm:alternative-linear-response-formula}, we can easily verify that 
\[
\left| \mu\left(\widetilde{\theta}_{\DT}^{\prime}F\right)  - \rho(\theta) - \frac{1}{2} \mu(\theta^{\prime} F) \DT \right| \leq C\DT^2
\]
for some constant $C$.
Similarly, we can also deduce from \eqref{eqn:diff-Poisson-solution} that
\[
\left|\mu\left( (\mathcal{L}\widetilde{\theta}_{\DT})^{\prime} F
- (\mathcal{L}\widehat{\theta})^{\prime} F\right)\right| \leq C \DT
\]
for some constant $C$. Substituting the above two estimates into the right-hand side of  \eqref{eqn:second-order-error-before-correction} gives
\begin{equation}\label{eqn:second-order-error-after-correction}
\begin{split}
&\left|\frac{1}{N\DT}\sum_{n=0}^{N-1}\mathbb{E}_{\DT}\left\{\left(\widetilde{\theta}_{\DT, n+1} - P_{\DT}\widetilde{\theta}_{\DT, n}\right)Y_N \right\}
+
\frac{\DT}{2N}\sum_{n=0}^{N-1} \mathbb{E}_{\DT}\{\theta_n^{\prime}F_n\} - \rho(\theta) \right| 
\leq C\left(\DT^2 +\frac{1}{N\DT}\right).
\end{split}
\end{equation}
 The error estimate \eqref{eqn:second-order-error} now follows by combining the above estimate with the estimates for the second and third terms on the right-hand side of $\eqref{eqn:second-order-estimate-proof}$.
\end{proof}

\subsubsection{Variance analysis of second order CLR scheme}

The following result is the counterpart of Theorem~\ref{thm:first-order-var} for the weak second order scheme~\eqref{eqn:second-order-scheme}.

\begin{theorem}\label{thm:second-order-var}
Consider an observable $\theta \in \mathcal{S}$ and the weak second order scheme~\eqref{eqn:second-order-scheme}. There exist $\DT^* > 0$ and $C_1,C_2 \in \mathbb{R}_+$ such that, for any $\DT \in (0, \DT^*]$,
\begin{equation}\label{second-order-var-estim}
  \mathrm{Var}_{\DT}\left\{\mathcal{M}_{\DT, N}^{[2]}(\theta)\right\} \leq C_1 + C_2\left(\DT + \frac{1}{N\DT}\right).
\end{equation}
\end{theorem}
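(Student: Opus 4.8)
The plan is to mirror the proof of Theorem~\ref{thm:first-order-var}, with the weight process $Z_N$ replaced by the modified martingale $Y_N$ from~\eqref{eqn:discrete-Z_2-scalar} and the second order scheme~\eqref{eqn:second-order-scheme} used throughout; as in the other proofs I work in the scalar case $d=1$. First I would split the estimator~\eqref{eqn:second-order-CLR-estimator} as $\mathcal{M}_{\DT,N}^{[2]}(\theta) = M_1 + M_2$, where $M_1 = \big(\frac{1}{N}\sum_{n=0}^{N-1}(\theta_n - \mu_{\DT}(\theta))\big) Y_N$ is the reweighted term and $M_2 = \frac{\DT}{2N}\sum_{n=0}^{N-1}\theta_n^{\prime} F_n$ is the correction term. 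Since $\theta^{\prime} F$ is bounded on the compact state space $\mathcal{X}$, the random variable $M_2$ is bounded almost surely by $C\DT$, so $\mathrm{Var}_{\DT}(M_2) \leq C\DT^2$. Using $\mathrm{Var}_{\DT}(M_1 + M_2) \leq 2\mathrm{Var}_{\DT}(M_1) + 2\mathrm{Var}_{\DT}(M_2)$, it suffices to bound the second moment of $M_1$, the contribution of the correction being absorbed into the term $C_2\DT$.

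For $M_1$ I would repeat the first order argument at the structural level. Rewriting $M_1$ through the discrete Poisson equation~\eqref{eqn:discrete-Poisson-equation} and the telescoping identity yields a martingale-increment representation plus a boundary term $(\widehat{\theta}_{\DT, N} - \widehat{\theta}_{\DT, 0}) Y_N$. Since $\widehat{\theta}_{\DT}$ is uniformly bounded and the martingale property gives $\mathbb{E}_{\DT}(Y_N^2) \leq C N\DT$ (each increment of $Y_N$ being of order $\sqrt{\DT}$ in $L^2$), the boundary term contributes $O((N\DT)^{-1}) = O(T^{-1})$ to the second moment. The remaining piece is $\frac{1}{N^2\DT^2}\mathbb{E}_{\DT}\big[(\sum_n \xi_n Y_N)^2\big]$ with $\xi_n = \widehat{\theta}_{\DT, n+1} - P_{\DT}\widehat{\theta}_{\DT, n}$ and $\eta_n = Y_{n+1} - Y_n$, which expands into $\sum_{n_1,n_2,n_3,n_4} \mathbb{E}_{\DT}\{\xi_{n_1}\xi_{n_2}\eta_{n_3}\eta_{n_4}\}$. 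The essential observation is that $\xi_n$ and $\eta_n$ are both $\mathcal{F}_{n+1}$-measurable and conditionally mean zero given $\mathcal{F}_n$: even though $\xi_n$ now depends on both $\Delta W_n$ and $V_n$ whereas $\eta_n$ depends only on $\Delta W_n$, the pairs $(\Delta W_n, V_n)$ are still independent across $n$. Hence the same conditioning argument as in~\eqref{eqn:first-order-scheme-var-proof} kills every term whose largest index occurs only once, and the fourfold sum collapses to $\sum_{n_1,n_2}\mathbb{E}_{\DT}\{\xi_{n_1}^2\eta_{n_2}^2\} + 2\sum_{n_1<n_2}\mathbb{E}_{\DT}\{\xi_{n_1}\xi_{n_2}\eta_{n_1}\eta_{n_2}\}$. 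As before I would replace $\widehat{\theta}_{\DT}$ by its smooth approximation $\widetilde{\theta}_{\DT}$ via~\eqref{eqn:approx-inverse-equality-equivalent}: choosing $p=2$ and using~\eqref{eq:bound_inverse_Ph} gives the almost sure bound $|\xi_n - \widetilde{\xi}_n| \leq C\DT^3$, even sharper than the first order estimate~\eqref{eqn:first-order-scheme-var-diff}, so the resulting corrections are negligible.

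The remaining task, and the one point where the second order scheme genuinely differs, is to establish the moment estimates for $\widetilde{\xi}_n = \widetilde{\theta}_{\DT, n+1} - P_{\DT}\widetilde{\theta}_{\DT, n}$ and $\eta_n$. These are read off from the It\^o--Taylor expansions already computed in the proof of Theorem~\ref{thm:second-order-error}: the leading term of $\widetilde{\xi}_n$ is $\widetilde{\theta}_{\DT, n}^{\prime}\sigma_n\Delta W_n$ and that of $\eta_n$ is $\sigma_n^{-1}F_n\Delta W_n$, the extra drift piece $\frac{1}{2}\sigma_n^{-2}F_n(-\mathcal{L}\sigma_n + \mathcal{K}b_n)\Delta W_n\DT$ in $Y_N$ being of higher order in $L^2$ and only feeding into the remainders. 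One then obtains, uniformly in $n$ and $\DT$,
\[
\mathbb{E}_{\DT}\{\widetilde{\xi}_n^2\eta_n^2\} \leq \mathbb{E}_{\DT}\{(\widetilde{\theta}_{\DT, n}^{\prime} F_n)^2\}\DT^2 + C\DT^3,
\]
\[
\mathbb{E}_{\DT}\{\widetilde{\xi}_{n_1}^2\eta_{n_2}^2\} \leq \mathbb{E}_{\DT}\{(\widetilde{\theta}_{\DT, n_1}^{\prime}\sigma_{n_1} U_{n_2})^2\}\DT^2 + C\DT^3, \quad n_1 \neq n_2,
\]
\[
\mathbb{E}_{\DT}\{\widetilde{\xi}_{n_1}\widetilde{\xi}_{n_2}\eta_{n_1}\eta_{n_2}\} \leq \mathbb{E}_{\DT}\{\widetilde{\theta}_{\DT, n_1}^{\prime} F_{n_1}\widetilde{\theta}_{\DT, n_2}^{\prime} F_{n_2}\}\DT^2 + C\DT^3, \quad n_1 < n_2,
\]
exactly as in the first order case. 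Inserting these into the collapsed sum, the leading $\DT^2$ coefficients (uniformly bounded on the compact $\mathcal{X}$) against the prefactor $\frac{1}{N^2\DT^2}$ produce the bounded constant $C_1$, which is precisely the discrete analogue of the asymptotic variance of Theorem~\ref{thm:LR-variance}, while all $C\DT^3$ remainders and the $|\xi_n - \widetilde{\xi}_n|$ corrections contribute only $O(\DT)$. Adding the $O(T^{-1})$ boundary term and the $O(\DT^2)$ variance of $M_2$ then yields~\eqref{second-order-var-estim}.

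The hard part will be the bookkeeping in these moment expansions: specifically, verifying that the cross term $\mathbb{E}_{\DT}\{\widetilde{\xi}_{n_1}\widetilde{\xi}_{n_2}\eta_{n_1}\eta_{n_2}\}$ carries the leading coefficient $\widetilde{\theta}_{\DT, n_1}^{\prime} F_{n_1}\widetilde{\theta}_{\DT, n_2}^{\prime} F_{n_2}$ with all higher contributions genuinely $O(\DT^3)$, and confirming that the $\DT$-order drift correction built into $Y_N$ (introduced to cancel the bias in Theorem~\ref{thm:second-order-error}) does not spoil the variance scaling. The combinatorial collapse of the fourfold sum and the telescoping and boundary estimates are, by contrast, structurally identical to the proof of Theorem~\ref{thm:first-order-var}.
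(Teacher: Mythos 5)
Your proof is correct and follows essentially the same route as the paper's: the paper also splits $\mathcal{M}_{\DT,N}^{[2]}(\theta)$ into the reweighted term and the correction term, bounds the variance of the correction term by $C\DT^2$, and disposes of the reweighted term by noting that the first-order argument carries over with $Z_N$ replaced by $Y_N$. The only difference is that the paper leaves that last step as a one-line remark, whereas you spell out the adaptation (the independence of the pairs $(\Delta W_n, V_n)$ across steps, the sharper bound $|\xi_n - \widetilde{\xi}_n| \leq C\DT^3$ from taking $p=2$, and the check that the $\mathcal{O}(\DT^{3/2})$ drift correction in $Y_N$ only enters the remainders).
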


\begin{proof}
  We write as usual the proof in the scalar case, in which case the weak second order estimator reads
\[
\mathcal{M}_{\DT, N}^{[2]}(\theta) = \frac{1}{N}\left[\sum_{n=0}^{N-1} \theta_n - \mu_{\DT}(\theta) \right] Y_N + \frac{\DT}{2N}\sum_{n=0}^{N-1}\theta_n^{\prime}  F_n,
\]
with $Y_N$ defined in~\eqref{eq:modified_weight_1D}. We bound the variance of the estimator by two separate parts,
\begin{equation*}
\mathrm{Var}_{\DT}\left\{\mathcal{M}_{\DT, N}^{[2]}(\theta)\right\} \leq 2\mathrm{Var}_{\DT}\left\{\frac{1}{N}\sum_{n=0}^{N-1} (\theta_n - \mu_{\DT}(\theta))Y_N\right\} + 2 \mathrm{Var}_{\DT}\left\{\frac{\DT}{2N}\sum_{n=0}^{N-1}\theta_n^{\prime} F_n\right\}.
\end{equation*}
The estimate of the first term of the right-hand side of the above inequality is similar to that of the first order estimator and hence can be shown to be bounded by $C_1 + C_2(\DT + (N\DT)^{-1})$ for some constants $C_1, C_2 >0$. The second term can be directly bounded by $Ch^2$ since $\theta$ and~$F$ are bounded.
\end{proof}

\subsection{General weak second order CLR scheme}\label{sec:generel-2nd-order}
The proof of Theorem~\ref{thm:second-order-error} in fact suggests a general 
strategy for constructing a second order CLR estimator on top of an arbitrarily given second order discretization scheme.
The key point is to remove all the $\mathcal{O}(\DT)$ errors from the one step increment
$\DT^{-1}\mathbb{E}_{\DT}\{(\widetilde{\theta}_{\DT}(X_{n+1}) - P_{\DT}\widetilde{\theta}_{\DT}(X_n)) (Y_{n+1} - Y_n)\}$. 
We present the strategy in the one-dimensional case for dynamics with multiplicative noise.

Suppose that a given weak second order discretization scheme satisfies the recursive formula
\[
X_{n+1} = X_n + c_0(X_n; G_n)\DT^{1/2} + c_1(X_n; G_n) \DT
+ c_2(X_n; G_n) \DT^{3/2} + R_\DT(X_n; G_n) \DT^2,
\]
where the coefficients $c_i$ depend on $X_n$, the random vectors $G_n$ are used to generate the increments $\Delta W_n$ and $R_\DT$ is some remainder term. This can be straightforwardly generalized to account for a dependence on additional random numbers, as in~\eqref{eqn:second-order-scheme} or Metropolis-type schemes. Note also that $G_n$ are not necessarily Gaussian, as long as they satisfy some moment conditions, as made precise below. We require that 
\[
  c_0(X_n; G_n) = \sigma_n G_n,
\]
\begin{equation}
  \label{eq:moment_conditions_general}
  \mathbb{E}(c_1(X_n;G_n)G_n) = 0, \qquad \mathbb{E}\left(c_0^2(X_n;G_n)G_n\right) = 0,
\end{equation}
and that $c_1,c_2,R_\DT$ are uniformly bounded in the sense that, for any $k \geq 1$, there exists $C_k \in \mathbb{R}_+$ and $\DT_k^* > 0$ such that $\mathbb{E}(|R_\DT(X_n;G_n)|^k) \leq C_k$ for any $0 < \DT \leq \DT_k^*$ (and similar estimates for $c_1,c_2$). Following the proof of Theorem~\ref{thm:second-order-error}, we expand $\widetilde{\theta}_{\DT, n+1} - P_{\DT}\widetilde{\theta}_{\DT, n}$ in powers of~$\DT$:
\begin{equation*}
    \begin{split}
&      \widetilde{\theta}_{\DT, n+1} - P_{\DT}\widetilde{\theta}_{\DT, n} \\
      & =
\widetilde{\theta}_{\DT, n}^{\prime} c_0(X_n; G_n) \DT^{1/2} + 
\left(\widetilde{\theta}_{\DT, n}^{\prime} c_1(X_n; G_n) + \frac{1}{2}\widetilde{\theta}_{\DT, n}^{(2)} c_0^2(X_n; G_n) - \mathcal{L}\widetilde{\theta}_{\DT, n}\right)\DT \\
& \ \ + 
\left(\widetilde{\theta}_{\DT, n}^{\prime}c_2(X_n; G_n) + \widetilde{\theta}_{\DT, n}^{(2)}c_0(X_n; G_n) c_1(X_n; G_n) + \frac{1}{6}\widetilde{\theta}_{\DT, n}^{(3)} c_0^3(X_n; G_n)\right)  \DT^{3/2} +
\mathcal{O}(\DT^2).
\end{split}
\end{equation*}
Next, we consider a modification of the weight process of the form
\[
Y_{n+1} = Y_n + \sigma_n^{-1} F_n \left(G_n + \gamma(X_n; G_n)\DT\right)\DT^{1/2},
\]
with $\gamma(X_n; G_n)$ to be determined. Hence, the expansion of
$\DT^{-1}\mathbb{E}_{\DT}\{(\widetilde{\theta}_{\DT}(X_{n+1}) - P_{\DT}\widetilde{\theta}_{\DT}(X_n)) (Y_{n+1} - Y_n)\}$ in powers of~$\DT$ reads:
\begin{equation}\label{eqn:general-expansion}
\begin{split}
\mathbb{E}_{\DT}\left\{\widetilde{\theta}_{\DT, n}^{\prime}c_0(X_n; G_n)\sigma_n^{-1}F_n G_n\right\}
+
\mathbb{E}_{\DT}\left\{\left(\widetilde{\theta}_{\DT, n}^{\prime}c_0(X_n; G_n) \gamma(X_n; G_n) + \widetilde{\theta}_{\DT, n}^{\prime} c_2(X_n; G_n)G_n\right.\right.\\
\left.\left.+ \widetilde{\theta}_{\DT, n}^{(2)}c_0(X_n; G_n)c_1(X_n; G_n)G_n + \frac{1}{6}\widetilde{\theta}_{\DT, n}^{(3)}c_0^3(X_n; G_n)G_n\right)\sigma_n^{-1}F_n\right\} \DT + \mathcal{O}(\DT^2),
\end{split}
\end{equation}
where we used~\eqref{eq:moment_conditions_general} to eliminate the terms of order~$\DT^{1/2}$ in the above expansion. Similar conditions guarantee that the terms of order~$\DT^{3/2}$ vanish.

In order to achieve a second order accuracy for linear response, we need to remove the $\mathcal{O}(\DT)$ errors from~\eqref{eqn:general-expansion}. As shown in the proof of Theorem~\ref{thm:second-order-error}, both the first and second terms of~\eqref{eqn:general-expansion} contain terms of order~$\DT$. Indeed, since we explicitly assume that $c_0(X_n; G_n) = \sigma_n G_n$, the first term of \eqref{eqn:general-expansion} becomes $\mathbb{E}_{\DT}\{\widetilde{\theta}_{\DT, n}^{\prime}F_n\}$, so that, by~\eqref{eqn:diff-discrete-continuous-Poisson-pointwise}, 
\[
\mathbb{E}_{\DT}\{\widetilde{\theta}_{\DT, n}^{\prime}F_n\}
=
\mathbb{E}_{\DT}\{\widehat{\theta}_n^{\prime}F_n\} + \frac{1}{2}\mathbb{E}_{\DT}\{\theta_n^{\prime}F_n\}\DT + \mathcal{O}(\DT^2),
\]
where the $\mathcal{O}(\DT)$ error can be removed by a correction a posteriori (as provided by the second term of~\eqref{eqn:second-order-CLR-estimator}). We next choose an appropriate correction $\gamma(X_n; G_n)$ for the second term of~\eqref{eqn:general-expansion} to vanish at dominant order in~$\DT$. However, such a correction function~$\gamma(X_n; G_n)$ may involve the solution to the discrete Poisson solution $\widehat{\theta}_{\DT}$ or its approximation $\widetilde{\theta}_{\DT}$, which would make it impossible to compute the modified weight process in practice. A more practical alternative is to look for functions~$d_1(X_n)$ and $d_2(X_n)$ such that the term of order~$\DT$ in~\eqref{eqn:general-expansion} is equal to   
\begin{equation}\label{eqn:general-expansion-dt-term}
\mathbb{E}_{\DT}\left\{d_1(X_n) \left(\mathcal{L}\widetilde{\theta}_{\DT, n}\right)^{\prime}
+ d_2(X_n) \mathcal{L}\widetilde{\theta}_{\DT, n}\right\}\DT.
\end{equation}
The approximate discrete Poisson solution $\widetilde{\theta}_{\DT}(X_n)$ can then be replaced at dominant order in~$\DT$ by the solution of the continuous time Poisson equation in view of~\eqref{eqn:diff-discrete-continuous-Poisson-pointwise}. Comparing the above formula with the second term of \eqref{eqn:general-expansion} and matching the terms that involve the same
order of derivatives of $\widetilde{\theta}_{\DT}$, we end up with the following system of equations:
\begin{equation}\label{eqn:general-expansion-system}
    \begin{split}
        &\mathbb{E}_{\DT}\left\{\frac{1}{6}\widetilde{\theta}_{\DT, n}^{(3)}c_0^3(X_n; G_n)\sigma_n^{-1} F_n G_n\right\}
        =
        \mathbb{E}_{\DT}\left\{\frac{1}{2}\widetilde{\theta}_{\DT, n}^{(3)}d_1(X_n)\sigma_n^2\right\},\\
        &\mathbb{E}_{\DT}\left\{\widetilde{\theta}_{\DT, n}^{(2)}c_0(X_n; G_n)c_1(X_n; G_n)\sigma_n^{-1}F_n G_n\right\}
        =
        \mathbb{E}_{\DT}\left\{\widetilde{\theta}_{\DT, n}^{(2)}\left(d_1(X_n)\left(b_n + \sigma_n\sigma_n^{\prime}\right) + \frac{1}{2}d_2(X_n)\sigma_n^2\right)\right\},\\
        &\mathbb{E}_{\DT}\left\{\widetilde{\theta}_{\DT, n}^{\prime}\left(c_0(X_n; G_n)\gamma(X_n; G_n) + c_2(X_n; G_n)G_n\right)\sigma_n^{-1}F_n \right\}
        =
        \mathbb{E}_{\DT}\left\{\widetilde{\theta}_{\DT, n}^{\prime}\left( d_2(X_n)b_n + d_1(X_n)b_n^{\prime}\right)\right\}.
    \end{split}
\end{equation}
Note that $d_1(X_n)$ can be identified from the first equation, then $d_2(X_n)$ from the second, and finally $\gamma(X_n; G_n)$ from the third one. Let us mention that these factors are independent of $\widetilde{\theta}_{\DT}$ and hence are computable. More precisely, the first equality holds for

\begin{equation}
  \label{eq:d_1}
d_1(X_n) = \frac13 \mathbb{E}_{G_n}\left[c_0^3(X_n; G_n)\sigma_n^{-1}F_n G_n\right],
\end{equation}
the second for
\begin{equation}
  \label{eq:d_2}
d_2(X_n) = 2\sigma_n^{-2} \left( \mathbb{E}_{G_n}\left[c_0(X_n ;G_n)c_1(X_n; G_n)\sigma_n^{-1}F_n G_n\right] - (b_n+\sigma_n\sigma_n^{\prime})d_1(X_n) \right),
\end{equation}
so that $\gamma$ is found by solving
\begin{equation}
  \label{eq:gamma}
  \begin{aligned}
    F_n \mathbb{E}_{G_n}\left[\gamma(X_n; G_n) G_n\right] = d_2(X_n) b_n & + d_1(X_n) b_n^{\prime} 
    - \sigma_n^{-1} F_n \mathbb{E}_{G_n}\left[c_2(X_n; G_n)G_n\right].
  \end{aligned}
\end{equation}

It remains to rewrite \eqref{eqn:general-expansion-dt-term} in a computable form. To this end, we use the estimate \eqref{eqn:diff-discrete-continuous-Poisson-pointwise} and 
the continuous time Poisson equation to rewrite \eqref{eqn:general-expansion} as
\begin{equation*}
\begin{aligned}
&\mathbb{E}_{\DT}\left\{d_1(X_n) \left(\mathcal{L}\widehat{\theta}_n\right)^{\prime} + d_2(X_n) \mathcal{L}\widehat{\theta}_n \right\}\DT + \mathcal{O}(\DT^2)\\
& \qquad = -\mathbb{E}\left\{d_1(X_n) \theta_n^{\prime} + d_2(X_n) \left(\theta_n - \mu(\theta)\right) \right\}\DT + \mathcal{O}(\DT^2).
\end{aligned}
\end{equation*}
The dominant contribution of order~$\DT$ can be corrected a posteriori (as done in~\eqref{eqn:second-order-CLR-estimator}) since it does not involve the solution to the discrete Poisson equation or its approximation.

Let us follow the above strategy to recover the modified weight process of Theorem~\ref{thm:second-order-error}. 
The coefficients~$c_i$ for the second order discretization~\eqref{eqn:second-order-scheme} are
\[
c_0(X_n; G_n) = \sigma_n G_n, \ \  c_1(X_n; G_n) = b_n + \frac{1}{2}\mathcal{K}\sigma_n (G_n^2 - 1) , \ \ c_2(X_n; G_n) = \frac{1}{2}(\mathcal{K}b_n + \mathcal{L}\sigma_n) G_n.
\]
Plugging them into~\eqref{eq:d_1} and~\eqref{eq:d_2} leads to $d_1 = F$ and $d_2 = 0$. A possible solution for~\eqref{eq:gamma} is then 
\[
\gamma(X_n; G_n) = \frac{1}{2}\sigma_n^{-1}(\mathcal{K}b_n - \mathcal{L}\sigma_n) G_n, 
\] 
which allows to recover~\eqref{eqn:discrete-Z_2-scalar}.

\section{Computational benchmark}
\label{sec:numerical-results}
We present an example demonstrating that the derived estimates are sharp 
and reliable for the weak first and second order schemes described in the previous sections. At the same time this example also indicates that for some observables the first order scheme may be sufficiently accurate and it can computationally outperform the second order scheme for a certain range of time steps $\DT$. 
The benchmark example is defined on the periodic domain $\mathbb{T} = \mathbb{R} \backslash \mathbb{Z}$ for the gradient dynamics defined by the potential $V(x) = \frac{1}{2} \cos(2\pi x)$, i.e., the drift function $b(x) = -V^\prime(x)$, with the additive noise 
$\sigma(x) = \sqrt{2}$, hence
$$
dX(t) = \pi \sin(2\pi X(t)) \, dt + \sqrt{2} \, dW(t)\,.
$$ 
We have chosen the observable as $\theta (x) = b(x)$ and the external forcing $F(x) = 1$ (which is indeed not the gradient of a smooth periodic function).

Estimating the bias of the estimators~\eqref{eqn:first-order-CLR-estimator} and~\eqref{eqn:second-order-CLR-estimator} with respect to the time step $\DT$ is computationally expensive as it requires independent sampling over long trajectories in order to achieve a good approximation to the stationary distribution and to control the variance of the estimator. In our simulations we used the time horizon $T=10^2$ for equilibration and $s=5\times 10^7$ independent samples in Algorithms~\ref{alg:first-order-CLR} and~\ref{alg:second-order-CLR} in order for the statistical error to be sufficiently small. The $95\%$ confidence intervals, while plotted in Figure~\ref{fig:sensitivity}, are at the limit of the figure resolution.
Estimated values of the sensitivity index $\rho(\theta)$ are depicted in Figure~\ref{fig:sensitivity}. The importance of a properly corrected second order estimator \eqref{eqn:second-order-CLR-estimator} is demonstrated by including computed values from the estimator without corrections.

\begin{figure}
    \centering
    \includegraphics[width=0.8\textwidth]{./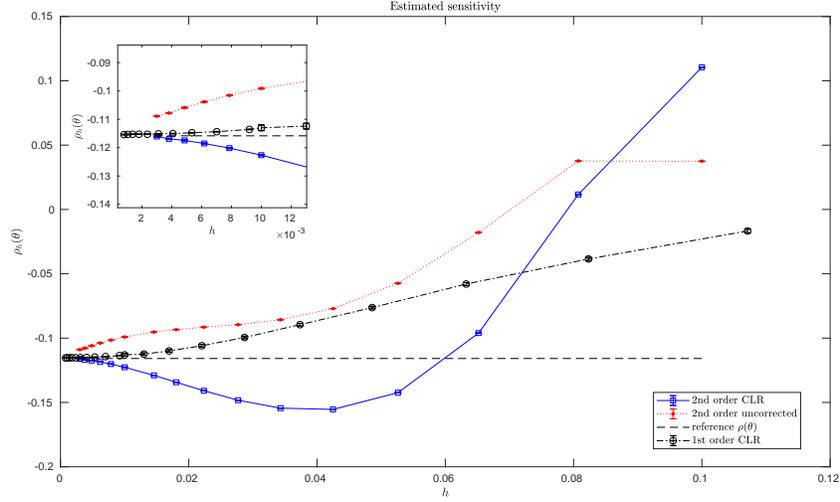}
    \caption{The sensitivity $\rho(\theta)$ estimated for different values of the time step $\DT$. The estimates are obtained from the first order scheme (marked by $\circ$) and the second order scheme (marked by $\square$). The estimates marked by $\bullet$ are obtained from the second order estimator without the correction term. The inset depicts a detail for a range of smaller time steps $h$. The reference value $\rho(\theta)$ has been computed by solving Fokker-Planck equation using numerical quadratures.}
    \label{fig:sensitivity}
\end{figure}

The convergence rates are estimated from error values obtained at the beginning of the asymptotic regime in~$\DT$. The observed convergence rate for the first order scheme was estimated as  $1.40\pm 0.06$ and for the second order scheme as $1.80\pm 0.05$. The error convergence is depicted in Figure~\ref{fig:error_estimators}. The error convergence plot for the CLR estimator also clearly demonstrates the necessity of the correcting term (see \eqref{eqn:second-order-CLR-estimator}) for the second order CLR sensitivity estimator. 

\begin{figure}
    \centering
    \includegraphics[width=0.6\textwidth]{./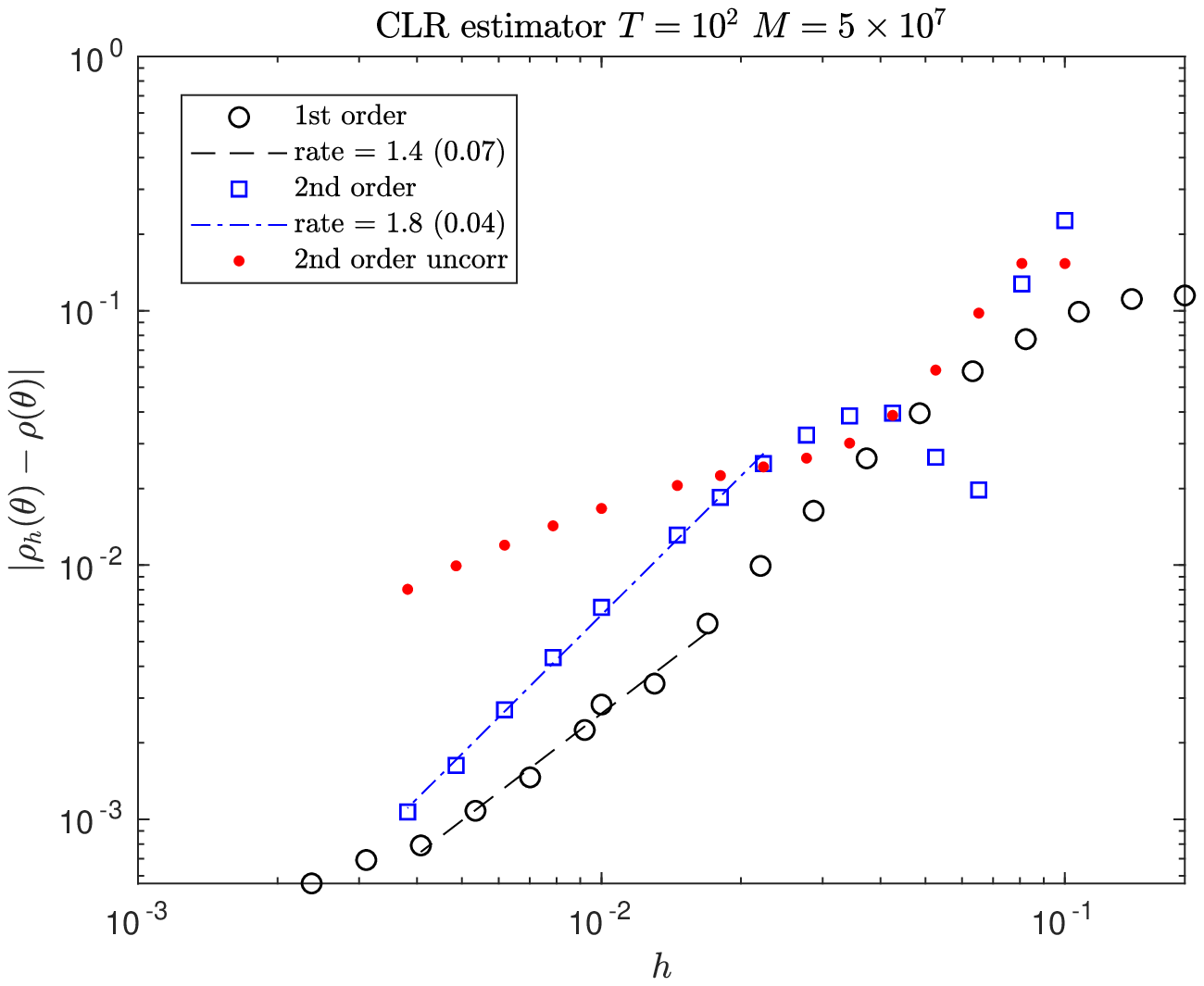}
    \caption{Convergence of the sensitivity estimators: the error for the estimated $\rho(\theta)$ obtained for different values of the time step $\DT$ (log-log scale). The estimates are obtained from the first order scheme (marked by $\circ$) and the second order scheme (marked by $\square$).  The error estimates marked by $\bullet$ are obtained from the second order estimator without the correction term. The reference value $\rho(\theta)$ has been computed by solving Fokker-Planck equation using numerical quadratures.  }
    \label{fig:error_estimators}
\end{figure}

An important feature of the proposed sensitivity estimator is the variance behavior of the CLR estimator $\mathcal{M}^{[k]}_{\DT,N}$ as stated in Theorem~\ref{thm:first-order-var} for $k=1$ and in 
Theorem~\ref{thm:second-order-var} for $k=2$. As the time horizon $T\equiv \DT N$ tends to infinity the variance is bounded by a constant. This result is demonstrated in Figure~\ref{fig:varT} which depicts,
for the fixed timesteps $\DT=10^{-2}$ and $\DT=10^{-3}$, the convergence of the estimated sensitivity $\rho_h(\theta)$ as well as
the estimated variance $\mathrm{Var}_{\DT}[\mathcal{M}^{[2]}_{\DT,N}]$ of the CLR estimator when increasing the time horizon $T\equiv \DT N$. The estimates in both cases 
($\mathbb{E}_{\DT}[\mathcal{M}^{[2]}_{h,N}]$, $\mathrm{Var}_{\DT}[\mathcal{M}^{[2]}_{h,N}]$) are obtained by averaging over $s=10^6$ independent sample trajectories of the physical time~$T$.  The first order estimator ($k=1$) exhibits a similar behaviour.

\begin{figure}
    \centering
    \includegraphics[width=0.6\textwidth]{./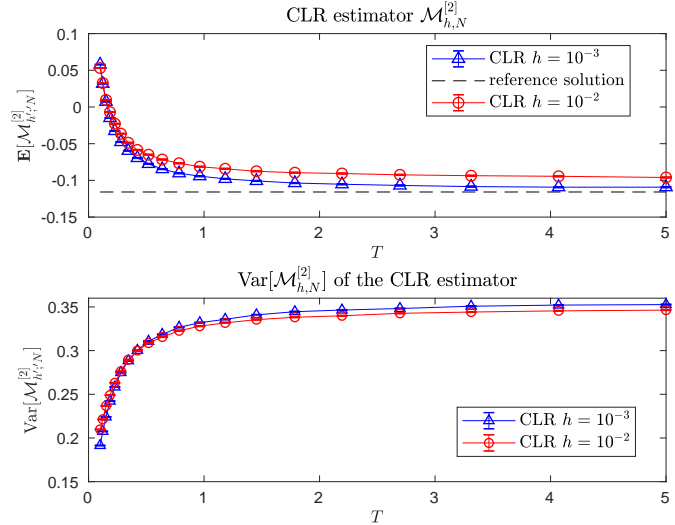}
    \caption{The mean and variance of the second order CLR estimator 
    $\mathcal{M}^{[2]}_{h,N}$ for the finite time-horizon trajectories $T\equiv\DT N$ estimated for each time horizon from $s=10^6$ independent samples.}
    \label{fig:varT}
\end{figure}


\section{Technical results}\label{sec:proofs}

We provide in this section two technical results to facilitate the proofs of the results presented in this paper. 

\subsection{Approximate inverse operator}
\label{sec:approx-inverse-operator}

We present here the proof of Theorem~\ref{thm:approx-inverse}, which gives error bounds for an approximate solution to the discrete Poisson equation~\eqref{eqn:discrete-Poisson-equation}. We follow the construction of the approximate inverse operator discussed in~\cite{leimkuhler2015computation, lelievre2016partial}. As in these works, we provide in fact an explicit construction of the approximate solution, whose derivatives we can indeed control. Recall that the interest of working with an approximate solution is that the solution~$\widehat{\theta}$ to the Poisson equation defined via the operator $\DT^{-1}[I - P_{\DT}]$ is well defined on $B^\infty_\DT$. However, we do not have control of its derivatives. This is however of paramount importance for us to establish the convergence result of the numerical schemes through Taylor-like expansions. 

We recall that we consider evolution operators admitting the following expansion in powers of~$\DT$ (see~\eqref{eqn:semigroup-expansion}):
\begin{equation*}
P_{\DT} = I + \DT\mathcal{A}_1 + \ldots + \DT^{p+1}\mathcal{A}_{p+1}
+ \DT^{p+2} \mathcal{R}_{p, \DT},
\end{equation*}
where the explicit expression of the operators $\mathcal{A}_n$ for $n = 1, \dots, p + 1$ and $\mathcal{R}_{p, \DT}$ can be identified systematically by Taylor expansions. 
Note first that $\widehat{\theta}_{\DT}$ satisfies
\begin{equation}\label{eqn:discrete-Poisson-equation-projected}
    \Pi\left[\frac{I - P_{\DT}}{\DT}\right]\Pi \widehat{\theta}_{\DT} = \theta - \mu(\theta).
\end{equation}
In order to find an approximation to $\widehat{\theta}_{\DT}$, we expand the operator $\DT^{-1}\Pi[I - P_{\DT}]\Pi$ in powers of $\DT$ as
\[
\Pi\left[\frac{I - P_{\DT}}{\DT}\right]\Pi = -\left(\widetilde{\mathcal{A}}_1 + \DT\widetilde{\mathcal{A}}_2 + \ldots + \DT^p \widetilde{\mathcal{A}}_{p+1}\right) - \DT^{p+1}\Pi \mathcal{R}_{p, \DT} \Pi, 
\]
where $\widetilde{\mathcal{A}}_n = \Pi \mathcal{A}_n \Pi$ for $n = 1, 2, \ldots, p+1$. Defining $\mathcal{B} = \widetilde{\mathcal{A}}_2 + \DT\widetilde{\mathcal{A}}_3 + \ldots + \DT^{p-1} \widetilde{\mathcal{A}}_{p+1}$, it holds
\[
\widetilde{\mathcal{A}}_1 + \DT\widetilde{\mathcal{A}}_2 + \ldots + \DT^{p} \widetilde{\mathcal{A}}_{p+1} = \widetilde{\mathcal{A}}_1 + \DT \mathcal{B} .
\]
Recalling that $\widetilde{\mathcal{A}}_1^{-1}$ is by assumption well defined from $\mathcal{S}_0$ to $\mathcal{S}_0$, the formal series expansion of the inverse of $\widetilde{\mathcal{A}}_1 + \DT \mathcal{B}$ is
\[
\widetilde{\mathcal{A}}_1^{-1} - \DT \widetilde{\mathcal{A}}_1^{-1}\mathcal{B} \widetilde{\mathcal{A}}_1^{-1} + \DT^2 \widetilde{\mathcal{A}}_1^{-1}\mathcal{B}\widetilde{\mathcal{A}}_1^{-1}\mathcal{B}\widetilde{\mathcal{A}}_1^{-1} + \ldots.
\]
Truncating the above formal series expansion up terms involving at most~$p$ instances of~$\mathcal{B}$, we end up with the operator
\[
\widetilde{Q}_{\DT} \triangleq \widetilde{\mathcal{A}}_1^{-1}\sum_{n=0}^{p} (-\DT)^n \left(\mathcal{B}\widetilde{\mathcal{A}}_1^{-1}\right)^n,
\]
which is such that the following equality holds on~$\mathcal{S}_0$:
\[
\left(\widetilde{\mathcal{A}}_1 + \DT \mathcal{B}\right)\widetilde{Q}_{\DT} = \Pi + (-1)^p \DT^{p+1} \left(\mathcal{B}\widetilde{\mathcal{A}}_1^{-1}\right)^{p+1}.
\]
We are now ready to define the approximate inverse operator $Q_{\DT}$ by expanding $\widetilde{Q}_{\DT}$ and keeping terms up to order~$\DT^p$, i.e., 
\[
Q_{\DT} \triangleq \widetilde{\mathcal{A}}_1^{-1} - \DT \widetilde{\mathcal{A}}_1^{-1} \widetilde{\mathcal{A}}_2 \widetilde{\mathcal{A}}_1^{-1}
+ \DT^2 \left(\widetilde{\mathcal{A}}_1^{-1} \widetilde{\mathcal{A}}_2 \widetilde{\mathcal{A}}_1^{-1}\widetilde{\mathcal{A}}_2 \widetilde{\mathcal{A}}_1^{-1} - \widetilde{\mathcal{A}}_1^{-1}\widetilde{\mathcal{A}}_3 \widetilde{\mathcal{A}}_1^{-1}\right) + \ldots + \DT^{p-1}\mathcal{Q}_{p-1} + \DT^{p} \mathcal{Q}_{p},
\] 
where $\mathcal{Q}_{n}$ for $n = 1, \ldots, p$ are operators mapping~$\mathcal{S}_0$ to~$\mathcal{S}_0$. Note that the approximate inverse operator $Q_{\DT}$ leaves $\mathcal{S}_0$ invariant. Finally, we define the approximate discrete Poisson solution by 
\begin{equation}\label{eqn:approx-inverse-solution}
    \widetilde{\theta}_{\DT} = -Q_{\DT}(\theta - \mu(\theta)). 
\end{equation}
The function~$\widetilde{\theta}_{\DT}$ indeed belongs to~$\mathcal{S}_0$. Moreover, it can be readily verified that 
\begin{equation*}
\Pi\left[\frac{I - P_{\DT}}{\DT}\right]\Pi \widetilde{\theta}_{\DT} = \theta - \mu(\theta) + \DT^{p+1} \phi_{\DT, p, \theta},
\end{equation*}
for some function $\phi_{\DT, p, \theta} \in \mathcal{S}_0$ that is uniformly bounded with respect to $\DT$ (in the sense of~\eqref{eq:unif_bound_remainder_approx_inverse}).

Finally, to obtain the estimates on $\widetilde{\theta}_{\DT}-\widehat{\theta}$, we note that, from the definition~\eqref{eqn:continuous-Poisson-equation} and~\eqref{eqn:approx-inverse-solution}, the following equality holds when $\mathcal{A}_1 = \mathcal{L}$:
\begin{equation}\label{eqn:diff-discrete-continuous-Poisson-pointwise}
\begin{split}
\widetilde{\theta}_{\DT}-\widehat{\theta} =  
\DT &\left[ \widetilde{\mathcal{A}}_1^{-1} \widetilde{\mathcal{A}}_2 \widetilde{\mathcal{A}}_1^{-1} - \DT \left(\widetilde{\mathcal{A}}_1^{-1} \widetilde{\mathcal{A}}_2 \widetilde{\mathcal{A}}_1^{-1}\widetilde{\mathcal{A}}_2 \widetilde{\mathcal{A}}_1^{-1} - \widetilde{\mathcal{A}}_1^{-1}\widetilde{\mathcal{A}}_3 \widetilde{\mathcal{A}}_1^{-1}\right) + \ldots \right.\\
&\quad \left.- \DT^{p-2}\mathcal{Q}_{p-1} - \DT^{p-1} \mathcal{Q}_{p}\right](\theta - \mu(\theta)).
\end{split}
\end{equation}
Theorem~\ref{thm:approx-inverse} follows immediately from the above discussion.

\subsection{An estimate of the elementary term}
\label{sec:estimate_elmentary}

Given an observable $\phi \in B^{\infty}$, we refer to the average $N^{-1}\sum_{n=0}^{N-1} \mathbb{E}_{\DT}\{\phi_n Z_N\}$ as an ``elementary term'' since quantities of this form are of fundamental importance in establishing the results in this work. Note that the LR sensitivity estimator is of this form. In this section, we establish bounds on such elementary terms, which allow to estimate remainders when performing Taylor expansions as in Theorem~\ref{thm:first-order-error}. The following bound is crude, but its strength is that it is uniform with respect to the test function. This is crucial since remainder functions, altough uniformly bounded in $B^\infty$, depend on the time step~$h$.

\begin{lemma}\label{lemma:elementary-coarse-estimate}
  Consider a discrete martingale $Z_N$, and assume that there exists a constant $K \in \mathbb{R}_+$ such that $\eta_n = Z_{n+1} - Z_n$ satisfies $\mathbb{E}_\DT(\eta_n^2) \leq K\DT$. Then there exist $\DT^* > 0$ and $C \in \mathbb{R}_+$ such that, for any $\DT \in (0, \DT^*]$ and any $\phi \in B^{\infty}$,
\[
\left| \frac{1}{N}\sum_{n=0}^{N-1} \mathbb{E}_{\DT}\{\phi_n Z_N\} \right| \leq \frac{C}{\sqrt{\DT}} \|\phi\|_{B^\infty}.
\]
\end{lemma}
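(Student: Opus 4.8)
The plan is to exploit the martingale structure of $Z_N$ together with the exponential ergodicity of the chain (Theorem~\ref{thm:ergodicity}) to rewrite the correlations $\mathbb{E}_\DT\{\phi_n Z_N\}$ as a geometrically decaying series in the time lag, whose summation produces the factor $\DT^{-1/2}$. Writing $Z_N = \sum_{m=0}^{N-1}\eta_m$ (we may assume $Z_0 = 0$) and denoting by $\mathcal{F}_n$ the natural filtration of the chain, the first observation I would make is that future increments do not contribute: for $m \geq n$ the random variable $\phi_n = \phi(X_n)$ is $\mathcal{F}_m$-measurable, so conditioning on $\mathcal{F}_m$ and using $\mathbb{E}_\DT\{\eta_m \mid \mathcal{F}_m\} = 0$ gives $\mathbb{E}_\DT\{\phi_n \eta_m\} = 0$. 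Hence only the past increments survive, and $\mathbb{E}_\DT\{\phi_n Z_N\} = \sum_{m=0}^{n-1}\mathbb{E}_\DT\{\phi_n \eta_m\}$.

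For each pair $m < n$, I would condition on $\mathcal{F}_{m+1}$, which makes $\eta_m$ measurable and, by the Markov property, replaces $\phi_n$ by $(P_\DT^{\,n-m-1}\phi)(X_{m+1})$:
\[
\mathbb{E}_\DT\{\phi_n \eta_m\} = \mathbb{E}_\DT\left\{\eta_m \left(P_\DT^{\,n-m-1}\phi\right)(X_{m+1})\right\}.
\]
Since $\eta_m$ has zero mean, one may subtract the constant $\mu_\DT(\phi)$ for free, so that $\phi$ can be replaced by $P_\DT^{\,n-m-1}\phi - \mu_\DT(\phi)$. Applying the Cauchy--Schwarz inequality, the assumption $\mathbb{E}_\DT(\eta_m^2) \leq K\DT$ and the exponential ergodicity bound of Theorem~\ref{thm:ergodicity} then yields
\[
\left|\mathbb{E}_\DT\{\phi_n \eta_m\}\right| \leq \sqrt{K\DT}\,\left\|P_\DT^{\,n-m-1}\phi - \mu_\DT(\phi)\right\|_{B^\infty} \leq C\sqrt{K\DT}\,\mathrm{e}^{-\kappa(n-m-1)\DT}\|\phi\|_{B^\infty}.
\]

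Summing over $m$ from $0$ to $n-1$ (equivalently over the lag $j = n-1-m \geq 0$) and bounding the geometric series by $\sum_{j\geq 0}\mathrm{e}^{-\kappa j\DT} = (1-\mathrm{e}^{-\kappa\DT})^{-1} \leq 2(\kappa\DT)^{-1}$ for $\DT$ small enough, I obtain a bound on $|\mathbb{E}_\DT\{\phi_n Z_N\}|$ of the form $2C\sqrt{K}\,\kappa^{-1}\DT^{-1/2}\|\phi\|_{B^\infty}$, which is uniform in $n$. Averaging over $n$ leaves this bound unchanged, giving the claim with $C = 2C_{\mathrm{erg}}\sqrt{K}/\kappa$. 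The point I would emphasize — and the reason a naive estimate fails — is that controlling $|\mathbb{E}_\DT\{\phi_n Z_N\}|$ by $\|\phi\|_{B^\infty}\,\mathbb{E}_\DT(Z_N^2)^{1/2}$ would only produce a bound of order $\sqrt{N\DT}=\sqrt{T}$, which grows with the integration time; the decisive step is therefore to trade the martingale growth of $Z_N$ against the exponential decay of correlations, so that the lag summation converges to the finite factor $(\kappa\DT)^{-1}$ rather than accumulating $N$ terms. This is where the full strength of Theorem~\ref{thm:ergodicity}, namely the uniformity of $\kappa$ and $C$ in $\DT$, is genuinely used.
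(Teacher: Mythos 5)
Your proof is correct, but it takes a genuinely different route from the paper's. The paper first passes to the discrete Poisson equation $\DT^{-1}(I-P_\DT)\widehat{\phi}_\DT = \phi - \mu_\DT(\phi)$, telescopes the sum into martingale increments $\xi_n = \widehat{\phi}_{\DT,n+1} - P_\DT\widehat{\phi}_{\DT,n}$ plus a boundary term $\widehat{\phi}_{\DT,N}-\widehat{\phi}_{\DT,0}$, and then uses the orthogonality of martingale increments so that only the diagonal products $\mathbb{E}_\DT\{\xi_n\eta_n\}$ survive; the factor $\DT^{-1/2}$ arises as $\sqrt{K\DT}/\DT$, with the geometric series hidden inside the uniform bound $\|\widehat{\phi}_\DT\|_{B^\infty}\leq R\|\phi\|_{B^\infty}$ coming from \eqref{eq:bound_inverse_Ph}. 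You perform the same two operations in the opposite order: you use the martingale property first (only past increments $\eta_m$, $m<n$, contribute to $\mathbb{E}_\DT\{\phi_n Z_N\}$), and then sum the resulting correlations explicitly using the decay $\|P_\DT^{j}\phi-\mu_\DT(\phi)\|_{B^\infty}\leq C\mathrm{e}^{-\kappa j\DT}\|\phi\|_{B^\infty}$ of Theorem~\ref{thm:ergodicity}, obtaining $\DT^{-1/2}$ as $\sqrt{K\DT}\cdot(\kappa\DT)^{-1}$. The two arguments are essentially dual and rest on the same inputs (martingale orthogonality and the $\DT$-uniform geometric ergodicity); yours is slightly more elementary in that it avoids introducing the Poisson solution for $\phi$ and produces no boundary term, whereas the paper's fits the Poisson-equation machinery used throughout and makes the constant $R$ reusable elsewhere. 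One cosmetic point: Theorem~\ref{thm:ergodicity} is stated for $m\in\mathbb{N}_+$, so for the lag $j=0$ (i.e.\ $m=n-1$) you should invoke the trivial bound $\|\phi-\mu_\DT(\phi)\|_{B^\infty}\leq 2\|\phi\|_{B^\infty}$ instead, which only affects the constant. Your closing remark about why the naive Cauchy--Schwarz bound $\|\phi\|_{B^\infty}\mathbb{E}_\DT(Z_N^2)^{1/2}\sim\sqrt{T}$ fails is exactly the right diagnosis.
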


This estimate can be used with the martingale increments obtained from~\eqref{eqn:discrete-Z} and~\eqref{eqn:discrete-Z_2-scalar} of the schemes we consider in this work.

\begin{proof}
  Throughout the proof, we denote by $C>0$ a generic constant which may change from line to line. Note first that $\mathbb{E}_{\DT}\{\phi_n Z_N\} = \mathbb{E}_{\DT}\{[\phi_n-\mu_{\DT}(\phi)] Z_N\}$. We use the discrete Poisson equation 
\[
  \left[\frac{I - P_{\DT}}{\DT}\right]\widehat{\phi}_{\DT} = \phi - \mu_{\DT}(\phi).
\]
By~\eqref{eq:bound_inverse_Ph} there exist $h^*$ and $R>0$ such that $\|\widehat{\phi}_{\DT}\|_{B^\infty} \leq R \|\phi\|_{B^\infty}$ for $h \in (0,h^*]$. 
We next rewrite the left-hand side of the desired inequality as
\begin{equation}\label{eqn:bdd-fundamental-term}
\begin{split}
&\frac{1}{N\DT}\sum_{n = 0}^{N-1} \mathbb{E}_{\DT}\left\{\left(\widehat{\phi}_{\DT, n} - P_{\DT}\widehat{\phi}_{\DT, n}\right) Z_N\right\}\\
& \qquad = \frac{1}{N\DT}\sum_{n = 0}^{N-1} \mathbb{E}_{\DT}\left\{\left(\widehat{\phi}_{\DT, n+1} - P_{\DT}\widehat{\phi}_{\DT, n}\right) Z_N\right\} -\frac{1}{N\DT}\mathbb{E}_{\DT}\left\{\left(\widehat{\phi}_{\DT, N} - \widehat{\phi}_{\DT, 0}\right) Z_N\right\}.
\end{split}
\end{equation}
For convenience, we denote the martingale differences by $\xi_n = \widehat{\phi}_{\DT, n+1} - P_{\DT}\widehat{\phi}_{\DT, n}$, and hence
\[
\frac{1}{N\DT}\sum_{n = 0}^{N-1} \mathbb{E}_{\DT}\left\{\left(\widehat{\phi}_{\DT, n+1} - P_{\DT}\widehat{\phi}_{\DT, n}\right) Z_N\right\} = \frac{1}{N\DT}\sum_{n = 0}^{N-1} \mathbb{E}_{\DT}\{\xi_n \eta_n\}.
\]
Note that by the Cauchy--Schwarz inequality, 
\[
\left|\mathbb{E}_{\DT}\{\xi_n \eta_n\}\right| \leq \mathbb{E}_{\DT}\{\xi_n^2\}^{1/2} \mathbb{E}_{\DT}\{\eta_n^2\}^{1/2} \leq \sqrt{K\DT} \mathbb{E}_{\DT}\{\xi_n^2\}^{1/2}. 
\]
Since $|\xi_n| \leq 2 \|\widehat{\phi}_{\DT}\|_{B^\infty} \leq 2R \|\phi\|_{B^\infty}$, we can conclude that
\begin{equation}\label{eqn:fundamental-coarse-1}
\left| \frac{1}{N\DT}\sum_{n = 0}^{N-1} \mathbb{E}_{\DT}\{\xi_n \eta_n\} \right| \leq \frac{C}{\sqrt{\DT}} \|\phi\|_{B^\infty}.
\end{equation}
Now, for the second term on the right hand side of~\eqref{eqn:bdd-fundamental-term}, 
the Cauchy--Schwarz inequality gives
\begin{equation}\label{eqn:fundamental-coarse-2}
  \begin{aligned}
  \left| \frac{1}{N\DT}\mathbb{E}_{\DT}\left\{ \left(\widehat{\phi}_{\DT, N} - \widehat{\phi}_{\DT, 0}\right) Z_N\right\} \right|
  & \leq \frac{1}{N\DT}\mathbb{E}_{\DT}\left\{\left(\widehat{\phi}_{\DT, N} - \widehat{\phi}_{\DT, 0}\right)^2\right\}^{1/2} \mathbb{E}_{\DT}\{Z_N^2\}^{1/2} \\
 & \leq \frac{C\|\phi\|_{B^\infty}}{N\DT}\left[\sum_{n=0}^{N-1}\mathbb{E}_{\DT}\{\eta_n^2\}\right]^{1/2}
  \leq \frac{C \|\phi\|_{B^\infty}}{\sqrt{N\DT}}.
\end{aligned}
\end{equation}
Finally, the result follows by combining the estimates~\eqref{eqn:fundamental-coarse-1} and~\eqref{eqn:fundamental-coarse-2}.
\end{proof}

\appendix

\section{Derivation of the modified martingale for the additive noise in the multi-dimensional case}
\label{sec:appendix}

We have shown in the proof of Theorem~\ref{thm:second-order-error} that the modified martingale \eqref{eqn:discrete-Z_2-scalar} leads to the correct second order CLR scheme in the scalar setting.
For the sake of completeness we provide a detailed  algebraic calculations to justify that, under the additional assumption of the additive noise (i.e., $\sigma(x)$ is independent of~$x$), the formula \eqref{eqn:discrete-Z_2-vector} leads to the second order CLR scheme in the multi-dimensional setting.

Similar to the proof of Theorem~\ref{thm:second-order-error} the multivariate expansion (in $\DT$) of the Poisson solution $\widetilde{\theta}_{\DT, n+1}$ reads 
\begin{equation*}
\begin{split}
\widetilde{\theta}_{\DT, n+1} = 
\widetilde{\theta}_{\DT, n}
&+ D^1 \widetilde{\theta}_{\DT, n}^\TR  \Phi_{\DT, n}
+ \frac{1}{2} D^2 \widetilde{\theta}_{\DT, n} : \Phi_{\DT, n}^{\otimes 2}
+ \frac{1}{6}D^3 \widetilde{\theta}_{\DT, n}  : \Phi_{\DT, n}^{\otimes 3} + r_{\DT, \theta, n}, \end{split}
\end{equation*}
with the remainder
\[
r_{\DT, \theta, n} = \left(\frac{1}{6}\int_0^1 u^3  D^4\widetilde{\theta}_{\DT}(X_n + u \Phi_{\DT, n})  \,du\right) : \Phi_{\DT, n}^{\otimes 4},
\]
where $D^k$ denotes the $k$-th order differential: for $v_1,\dots,v_k \in \mathbb{R}^d$,
\[
D^k f(X) : (v_1 \otimes \dots \otimes v_k) = \sum_{i_1 + \dots + i_k = d} \frac{\partial^k f}{\partial_{x_1}^{i_1} \dots \partial_{x_d}^{i_d}}(X) v_1^{i_1} \dots v_d^{i_d}.
\]
In the presentation of formulas below we use a matrix notation in which the  gradient 
$\nabla \widetilde\theta$ is viewed as a column vector with components $\partial_{x_i} \widetilde\theta$ and the second order differential $D^2 \widetilde\theta$ is represented by the Hessian matrix $\nabla^2\widetilde\theta$ of the second derivatives $\partial^2_{x_i,x_j}\widetilde\theta$.
We recall the increment function $\Phi_{\DT, n}$  for the second order discretization as defined in \eqref{eqn:second-order-scheme}, the corresponding 
induced semigroup $P_{\DT}$ and the modified martingale $Y_n$ as defined in \eqref{eqn:discrete-Z_2-scalar}.
After expanding $\widetilde{\theta}_{\DT, n+1} - P_{\DT}\widetilde{\theta}_{\DT, n}$ in powers of~$\DT^{1/2}$, 
the resulting terms of order $h^{1/2}$ and order $h^{3/2}$ are
\begin{equation}\label{eqn:X-increment-1}
\nabla \widetilde{\theta}_{\DT, n}^\TR  \sigma \Delta W_n
\end{equation}
and 
\begin{equation}\label{eqn:X-increment-2}
\begin{split}
&\frac{1}{2}\nabla \widetilde{\theta}_{\DT, n}^\TR (\mathcal{K} b_n) 
\Delta W_n \DT + (\sigma \Delta W_n)^{\TR}   \nabla^2 \widetilde{\theta}_{\DT, n} b_n h +
\frac{1}{6} D^3 \widetilde{\theta}_{\DT, n}  (\sigma \Delta W_n)^{\otimes 3},
\end{split}
\end{equation}
respectively, where $\nabla b_n = [\nabla b_n^1 , \ldots, \nabla b_n^d] \in \mathbb{R}^{d \times d}$.
Similarly, for the increment $Y_{n+1} - Y_n$, the terms of order $h^{1/2}$ and order $h^{3/2}$ are
\begin{equation}\label{eqn:Y-increment-1}
(\sigma^{-1} F_n)^\TR \Delta W_n
\end{equation}
and 
\begin{equation}\label{eqn:Y-increment-2}
\frac{1}{2}(\sigma^{-1} F_n)^{\TR} (\mathcal{K} b_n)^{\TR} \sigma^{-\TR} \Delta W_n \DT,
\end{equation}
respectively.

Next, we expand the product $\DT^{-1}\mathbb{E}_{\DT}\{(\widetilde{\theta}_{\DT, n+1} - P_{\DT}\widetilde{\theta}_{\DT, n})(Y_{n+1} - Y_n) \}$ and compute the terms of order $1$
and~$\DT$ as follows.
By multiplying $\eqref{eqn:X-increment-1}$ and $\eqref{eqn:Y-increment-1}$ together, we
obtain the term of order $1$
\begin{equation}\label{eqn:order-zero}
\mathbb{E}_h \left[ \nabla \widetilde{\theta}_{\DT, n}^\TR  \sigma \Delta W_n  \Delta W_n^\TR   (\sigma^{-1} F_n)\right]  = \mathbb{E}_h \left[  \nabla \widetilde{\theta}_{\DT, n}^\TR F_n \right],
\end{equation}
which is consistent with the univariate case. 
The computation of the order $h$ terms is more involved.
First the product of \eqref{eqn:X-increment-1} and \eqref{eqn:Y-increment-2} leads to 
\begin{equation}\label{eqn:order-one-1}
\begin{split}
&\frac{h}{2}\mathbb{E}_h\left[  (\sigma^{-1} F_n)^\TR \left(\mathcal{K} b_n\right)^{\TR} \sigma^{-\TR} \Delta W_n  \Delta W_n^{\TR} \sigma^{\TR} \nabla \widetilde{\theta}_{\DT, n}  \right] =  
\frac{h}{2}\mathbb{E}_h\left[ (\sigma^{-1} F_n)^\TR \left(\mathcal{K} b_n\right)^{\TR}  \nabla \widetilde{\theta}_{\DT, n} \right].
\end{split}
\end{equation}
There are additional terms of order $h$ coming from the product of \eqref{eqn:X-increment-2} and \eqref{eqn:Y-increment-1}.
The multiplication of the first term of \eqref{eqn:X-increment-2} and \eqref{eqn:Y-increment-1} leads to
\begin{equation}\label{eqn:order-one-2}
\begin{split}
&\frac{h}{2}\mathbb{E}_h
\left[ 
(\sigma^{-1} F_n)^{\TR} \Delta W_n \Delta W_n^{\TR}
(\mathcal{K} b_n)^{\TR} 
\nabla \widetilde{\theta}_{\DT, n}
\right] =
\frac{h}{2}\mathbb{E}_h
\left[ 
(\sigma^{-1} F_n)^{\TR}
(\mathcal{K} b_n)^{\TR} 
\nabla \widetilde{\theta}_{\DT, n}
\right].
\end{split}
\end{equation}
Note that the sum of \eqref{eqn:order-one-1} and \eqref{eqn:order-one-2} is
\begin{equation}\label{eqn:order-one-1+2}
h\mathbb{E}_h \left[ (\sigma_n^{-1} F_n)^{\TR}
(\mathcal{K} b_n)^{\TR} 
\nabla \widetilde{\theta}_{\DT, n}    \right]
=
h\mathbb{E}_h\left[
(\nabla b_n \nabla \widetilde{\theta}_{\DT, n})^{\TR}
F_n
\right].
\end{equation}
Similarly, multiplying the second term of \eqref{eqn:X-increment-2} by \eqref{eqn:Y-increment-1} leads to 
\begin{equation}\label{eqn:order-one-3}
h \mathbb{E}\left[ F_n^{\TR} \sigma^{-\TR} \Delta W_n \Delta W_n^{\TR} \sigma^\TR   \nabla^2 \widetilde{\theta}_{\DT, n} b_n \right] = h \mathbb{E}_h \left[  \left(\nabla^2 \widetilde{\theta}_{\DT, n} b_n \right)^{\TR}  F_n\right].
\end{equation}
It only remains to compute the product of the last term of \eqref{eqn:X-increment-2} and   
\eqref{eqn:Y-increment-1}.
To this end, note that
\begin{equation}\label{eqn:order-one-4}
\begin{split}
&\frac{\DT}{6}\mathbb{E}_h 
\left[
D^3 \widetilde{\theta}_{\DT, n} : (\sigma_n\Delta W_n)^{\otimes 3} (\sigma_n^{-T}\Delta W_n)^\TR F_n
\right]\\
& \qquad =
\frac{\DT}{6}\sum_{i,j,k,l=1}^d\mathbb{E}_{\DT}
\left[
\partial_{x_i,x_j,x_k}^3\widetilde{\theta}_{\DT, n} 
(\sigma \Delta W_n)^i
(\sigma \Delta W_n)^j
(\sigma \Delta W_n)^k
(\sigma^{-\TR}\Delta W_n)^l F_n^l
\right]\\
& \qquad = 
\frac{\DT}{2}\sum_{i,j,k,l=1}^d
\mathbb{E}_{\DT}\left[\partial_{x_i,x_j,x_k}^3\widetilde{\theta}_{\DT, n}
\sum_{\alpha=1}^d \sigma^{i\alpha} \sigma^{j\alpha}
\sum_{\beta=1}^d \sigma^{k\beta} (\sigma^{-1})^{\beta l} F_n^l \right]\\
& \qquad =
\frac{\DT}{2}\sum_{i,j,k=1}^d\mathbb{E}_{\DT}\left[
\left(\sigma \sigma^{\TR}\right)^{ij}\partial_{x_i,x_j,x_k}^3\widetilde{\theta}_{\DT, n}F_n^k\right],
\end{split}
\end{equation}
where we have used the fact that (with the usual definition of the Kronecker symbol $\delta_{\alpha\beta}$)

\begin{equation*}
 \mathbb{E}_{\Delta W_n}[\Delta W_n^{\alpha}\Delta W_n^{\beta} \Delta W_n^{\gamma}\Delta W_n^{\delta}] = \delta_{\alpha\beta}\delta_{\gamma\delta} +
 \delta_{\alpha\gamma}\delta_{\beta\delta} +
 \delta_{\alpha\delta}\delta_{\beta\gamma}
\end{equation*}

Finally, combining \eqref{eqn:order-one-1+2} to \eqref{eqn:order-one-4} we obtain (recalling that~$\sigma$ is constant)
\[
\DT \mathbb{E}_h\left[
F_n^\TR \nabla b_n \nabla \widetilde{\theta}_{\DT, n}
F_n
+
F_n^\TR \nabla^2 \widetilde{\theta}_{\DT, n} b_n 
+
\frac12 F_n^\TR \nabla \left( \sum_{i,j=1}^d \left(\sigma\sigma^\TR\right)^{ij} \partial_{x_i,x_j}^2 \widetilde{\theta}_{\DT, n} \right)
\right]
=
\DT \mathbb{E}_{\DT}\left[ F_n^\TR\nabla\mathcal{L}\widetilde{\theta}_{\DT, n} \right],
\]
which leads to the order $\DT$ correction term $-\DT \mathbb{E}_{\DT}[F_n^\TR \nabla \theta_n]$ by the same argument as in the proof of Theorem~\ref{thm:second-order-error}.

\section*{Acknowledgement}
We thank David Aristoff for useful discussions. The research of T.W. was sponsored by the CCDC Army Research Laboratory and was accomplished under Cooperative Agreement Number W911NF-16-2-0190. The work of T.W. and P.P. was supported in part by the DARPA project W911NF-15-2-0122, while the work of G.S. was funded by the Agence Nationale de la Recherche, under grant ANR-14-CE23-0012 (COSMOS), and by the European Research Council (ERC) under the European Union's Horizon 2020 research and innovation programme (grant agreement No 810367). G.S. also benefited from the scientific environment of the Laboratoire International Associ\'e between the Centre National de la Recherche Scientifique and the University of Illinois at Urbana-Champaign.

\section*{Disclaimer}
The views and conclusions contained in this document are those of the authors and should not be interpreted as representing the official policies, either expressed or implied, of the Army Research Laboratory or the U.S. Government. The U.S. Government is authorized to reproduce and distribute reprints for Government purposes notwithstanding any copyright notation herein.  

\bibliographystyle{siamplain}
\bibliography{references}
\end{document}